\newtheorem{theorem}{Theorem}[section]
\newtheorem{lemma}[theorem]{Lemma}
\newtheorem{proposition}[theorem]{Proposition}
\newtheorem{definition}[theorem]{Definition}
\newtheorem*{theorem*}{Theorem}
\newtheorem*{lemma*}{Lemma}
\newtheorem*{remark*}{Remark}
\newtheorem*{definition*}{Definition}
\newtheorem*{proposition*}{Proposition}
\newtheorem*{corollary*}{Corollary}
\numberwithin{equation}{section}
\newcommand{\real}{\mathbb{R}}
\let\ced=\c         % cedilla
\def\qed{\,\unskip\kern 6pt \penalty 500
\raise -2pt\hbox{\vrule \vbox to8pt{\hrule width 6pt
\vfill\hrule}\vrule}\par}
\definecolor{darkblue}{rgb}{0.05, .05, .65}
\definecolor{darkgreen}{rgb}{0.1, .65, .1}
\definecolor{darkred}{rgb}{0.8,0,0}
\newcommand{\beqn}{\begin{equation}}
\newcommand{\eeqn}{\end{equation}}
\newcommand{\bear}{\begin{eqnarray}}
\newcommand{\eear}{\end{eqnarray}}
\newcommand{\bean}{\begin{eqnarray*}}
\newcommand{\eean}{\end{eqnarray*}}
\begin{document}
%%%%%%%%%%%%%%%%%%%%%%%%%%%%%%%%%%%%%%%%%%%%%%%%%

%%%%%%%%%%%%%%%%%%%%%%%%%%%%%%%%%%%%%%%%%%%%%%%%%
\title{\huge \bf Self-similar blow-up profiles for a reaction-diffusion equation with strong weighted reaction}

\author{
\Large Razvan Gabriel Iagar\,\footnote{Departamento de Matem\'{a}tica
Aplicada, Ciencia e Ingenieria de los Materiales y Tecnologia
Electr\'onica, Universidad Rey Juan Carlos, M\'{o}stoles,
28933, Madrid, Spain, \textit{e-mail:} razvan.iagar@urjc.es},
\\[4pt] \Large Ariel S\'{a}nchez,\footnote{Departamento de Matem\'{a}tica
Aplicada, Ciencia e Ingenieria de los Materiales y Tecnologia
Electr\'onica, Universidad Rey Juan Carlos, M\'{o}stoles,
28933, Madrid, Spain, \textit{e-mail:} ariel.sanchez@urjc.es}\\
[4pt] }
\date{}
\maketitle

\begin{abstract}
We study the self-similar blow-up profiles associated to the following second order reaction-diffusion equation with strong weighted reaction and unbounded weight:
$$
\partial_tu=\partial_{xx}(u^m) + |x|^{\sigma}u^p,
$$
posed for $x\in\real$, $t\geq0$, where $m>1$, $0<p<1$ and $\sigma>2(1-p)/(m-1)$. As a first outcome, we show that finite time blow-up solutions in self-similar form exist for $m+p>2$ and $\sigma$ in the considered range, a fact that is completely new: in the already studied reaction-diffusion equation without weights there is no finite time blow-up when $p<1$. We moreover prove that, if the condition $m+p>2$ is fulfilled, all the self-similar blow-up profiles are compactly supported and there exist \emph{two different interface behaviors} for solutions of the equation, corresponding to two different interface equations. We classify the self-similar blow-up profiles having both types of interfaces and show that in some cases \emph{global blow-up} occurs, and in some other cases finite time blow-up occurs \emph{only at space infinity}. We also show that there is no self-similar solution if $m+p<2$, while the critical range $m+p=2$ with $\sigma>2$ is postponed to a different work due to significant technical differences.
\end{abstract}

\

\noindent {\bf AMS Subject Classification 2010:} 35B33, 35B40,
35K10, 35K67, 35Q79.

\smallskip

\noindent {\bf Keywords and phrases:} reaction-diffusion equations,
weighted reaction, blow-up, self-similar solutions, phase
space analysis, strong reaction

\section{Introduction}

The goal of this paper is to study and classify the self-similar blow-up profiles for the following reaction-diffusion equation with weighted reaction
\begin{equation}\label{eq1}
u_t=(u^m)_{xx}+|x|^{\sigma}u^p, \qquad u=u(x,t), \quad
(x,t)\in\real\times(0,T),
\end{equation}
in the following range of exponents
\begin{equation}\label{range.exp}
m>1, \ \quad 0<p<1, \quad \sigma>\frac{2(1-p)}{m-1},
\end{equation}
where, as usual, the subscript notation in \eqref{eq1} indicates partial derivative with respect to the time or space variable. By finite time blow-up we understand the situation when a solution which was bounded before, becomes unbounded at time $T\in(0,\infty)$. More precisely, we say that a solution
$u$ to \eqref{eq1} blows up in finite time if there exists $T\in(0,\infty)$ such that $u(T)\not\in L^{\infty}(\real)$, but $u(t)\in L^{\infty}(\real)$ for any $t\in(0,T)$. The smallest time $T<\infty$ satisfying this property is known as the blow-up time of $u$. Here and in the sequel, we denote by $u(t)$ the map $x\mapsto u(x,t)$ for a fixed time $t\in[0,T]$. The present work is a part of a larger project developed by the authors having the aim to understand the blow-up behavior of solutions to reaction-diffusion equations with weighted reaction and unbounded weights.

The reaction-diffusion equation
\begin{equation}\label{eq1.hom}
u_t=(u^m)_{xx}+u^p
\end{equation}
has been considered since long and its blow-up behavior in the range $p>1$ is nowadays well understood, at least in one space dimension. Good surveys of the classical results on finite time blow-up for \eqref{eq1.hom} with either $m=1$, or $m>1$ but $p>1$ can be found in the books \cite{QS} and \cite{S4}. However, in the present work we consider exponents $p\in(0,1)$, a case in which it is known that finite time blow-up does not occur for bounded and compactly supported initial conditions. Eq. \eqref{eq1.hom} for exponents $p\in(0,1)$ has been considered in a series of papers by de Pablo and V\'azquez \cite{dPV90, dPV91, dPV92} where the rather complex but very interesting qualitative theory is developed. In this sequence of works it is shown that the Cauchy problem associated to Eq. \eqref{eq1.hom} is generally ill-posed, as uniqueness of solutions is lacking. More precisely, local existence of solutions is established in suitable functional spaces and it is moreover shown that all the solutions (if more than one) having the same initial condition can be ordered between a minimal solution $\underline{u}$ and a maximal solution $\overline{u}$ obtained as a limit process \cite{dPV91}. Concerning deeper qualitative properties of solutions such as uniqueness, finite or infinite speed of propagation, interface equation, it is shown in \cite{dPV90} that many of these properties depend strongly on the sign of $m+p-2$, for example

$\bullet$ if $m+p-2\geq0$, finite speed of propagation of compactly supported solutions occurs and given a bounded initial condition $u_0$, it is shown that uniqueness of solutions holds true \emph{if and only if $u_0(x)>0$} for any $x\in\real$. Indeed, the authors of \cite{dPV90} prove that the maximal solution $\overline{u}$ is always positive, while the minimal solution $\underline{u}$ has always compact support if $u_0$ is itself compactly supported. Thus, at least two solutions are obtained, while for positive data $u_0$ uniqueness is established.

$\bullet$ if $m+p-2<0$, infinite speed of propagation is established in \cite{dPV90}: for any data $u_0$ such that $u_0\not\equiv0$, local solutions become strictly positive $u(x,t)>0$ for any $t>0$, thus uniqueness then holds true along the lines of the previous case.

The non-uniqueness of solutions to Eq. \eqref{eq1.hom} has been further investigated in \cite{dPV92}, and a classification of all the possible solutions starting from a fixed initial condition is given. Moreover, the large time behavior of solutions is addressed in \cite{dP94}, and in all these works the self-similar solutions of the equation, of the form
$$
u(x,t)=t^{-\alpha}f(xt^{-\beta})
$$
with suitable exponents $\alpha$, $\beta$ and profiles $f$ play a significant role both as subjects for the comparison principle and as patterns that the solutions approach for large times \cite{dP94}. This proves the importance of having a good knowledge of the self-similar solutions to Eq. \eqref{eq1}, such solutions are expected to give the patterns of the whole dynamics of the equation. Moreover, they are often used also for comparison with other solutions whose bounds are established in this way.

Concerning the reaction-diffusion equations with weighted reaction terms, a number of works are devoted to their qualitative theory and focus on the existence of the Fujita exponents (that is, exponents $p_*$ such that, for $p<p_*$ any solution to Eq. \eqref{eq1} blows up in finite time) and, above this exponent, on giving further conditions on the initial data $u_0$ for finite time blow-up to take place, or on the contrary, smallness conditions insuring that the solutions to Eq. \eqref{eq1} are global. We recall here, in the semilinear case, the works by Pinsky \cite{Pi97, Pi98} and for the slow diffusion $m>1$, $p>m$, the very general paper by Suzuki \cite{Su02} establishing conditions on the tail of $u_0(x)$ as $|x|\to\infty$ for the blow-up to take place. Andreucci and Tedeev \cite{AT05} establish the blow-up rates for $m>1$, $p>m$ and suitable range of $\sigma>0$, even in the more general case of the doubly nonlinear equation. More recent papers deal with more general cases of unbounded weights, either pure positive powers or pure negative powers (that are unbounded at the origin), or even studying finite time blow-up for equations with two weights, one on the reaction term and another one on $\partial_tu$, such as for example \cite{WZ06, MT08, MTS12}. When the reaction is weighted with a pure power term such as $|x|^{\sigma}$, which vanishes at $x=0$, another natural question is whether $x=0$ (and more generally the zeros of the weight in the case of a general weight $V(x)$) can be a blow-up point. This has been studied in \cite{GLS10, GLS13, GS11, GS18}, focusing on the case of the homogeneous Dirichlet problem in a bounded domain.

Recently the authors started a long term project of understanding the dynamics of Eq. \eqref{eq1} in different cases of $m$, $p$ and $\sigma$, with the aim of answering some finer questions concerning the finite time blow-up: classifying the blow-up sets, obtaining blow-up rates and if possible, establishing the patterns of general solutions near the blow-up time. Taking into account the relevance of the self-similar solutions for these questions, we focused on classifying the possible blow-up patterns for Eq. \eqref{eq1}, obtaining some interesting and completely new types of profiles (whose existence depends on the magnitude of $\sigma$) that do not exist in the non-weighted case. We also show in \cite{IS19a} that for $p=1$ but $\sigma>0$ finite time blow-up produces, a fact that is not true with $\sigma=0$ (that is, without a weight). In another recent work \cite{IS20} we show that for the critical case $p=m>1$ there exist multiple blow-up profiles if $\sigma>0$ is sufficiently small but all these profiles cease to exist when $\sigma$ increases, a fact that has to be further understood (as in that case, the blow-up phenomenon is no longer possible to follow a global in space self-similar pattern). Finally, in \cite{IS19a, IS19b} a study of self-similar profiles is performed for $1\leq p<m$ showing that the profiles and their blow-up sets strongly differ with respect to $\sigma$: finite time blow-up occurs globally for $\sigma>0$ small, while the blow-up set of the profiles is shown to be only the space infinity when $\sigma>0$ increases, due to the strength of the power $|x|^{\sigma}$ when $|x|$ is very large. The present work is aimed to continue this study, for the very interesting case when $0<p<1$ but $\sigma>0$ is large enough in order to force solutions to blow up in finite time. The general qualitative theory of a very similar reaction-diffusion equation to our Eq. \eqref{eq1} with $p<1$ will be developed in a companion paper \cite{IMS20}, where the results of the present work are strongly used.

\medskip

\noindent \textbf{Main results.} As we have explained above, this paper deals with the self-similar blow-up profiles for Eq. \eqref{eq1}, in the range of exponents \eqref{range.exp}. It is a well established fact that the self-similar solutions to \eqref{eq1} contain significant information on the qualitative properties of general solutions: indeed, on the one hand they are expected to give the "optimal" behavior in a priori estimates for general solutions and on the other hand they are the patterns that generic solutions approach asymptotically (either as $t\to\infty$ in the case of global solutions, or as $t\to T$ if finite time blow-up occurs). Thus, knowing how the self-similar profiles behave is an information of utmost importance in the study of nonlinear diffusion and reaction-diffusion equations. In the case of Eq. \eqref{eq1} and exponents as in \eqref{range.exp}, our classification of self-similar solutions shows in particular that we are in a range where solutions are expected to blow up in finite time, as self-similar solutions do. To be more precise, we look for \emph{backward self-similar solutions} in the form
\begin{equation}\label{SSform}
u(x,t)=(T-t)^{-\alpha}f(\xi), \quad \xi=|x|(T-t)^{\beta},
\end{equation}
where $T\in(0,\infty)$ is the finite blow-up time and $\alpha>0$, $\beta\in\real$ exponents to be determined. Replacing the form \eqref{SSform} in \eqref{eq1}, we readily find that
\begin{equation}\label{SSexp}
\alpha=\frac{\sigma+2}{\sigma(m-1)+2(p-1)}>0, \quad \beta=\frac{m-p}{\sigma(m-1)+2(p-1)}>0
\end{equation}
and the self-similar profile $f$ is a solution to the non-autonomous differential equation
\begin{equation}\label{SSODE}
(f^m)''(\xi)-\alpha f(\xi)+\beta\xi f'(\xi)+\xi^{\sigma}f^{p}(\xi)=0, \quad \xi\in[0,\infty).
\end{equation}
Let us notice that the condition $\sigma>2(1-p)/(m-1)$ insures that the self-similarity exponents $\alpha$ and $\beta$ as in \eqref{SSexp} are well-defined and positive, thus it is the lower bound for $\sigma$ that will lead to finite time blow-up of the solutions. This is in \emph{strong contrast} with, for example, the autonomous case $\sigma=0$ where solutions exist and remain bounded globally in time, as shown for example in \cite{dPV90, dPV91}. We perform in the sequel a deep study of the previous ODE. We thus define what we understand by a \emph{good profile} below (similar to \cite{IS19a, IS19b}).
\begin{definition}\label{def1}
We say that a solution $f$ to the differential equation \eqref{SSODE} is a \textbf{good profile} if it fulfills one of the following two properties related to its behavior at $\xi=0$:

\indent (P1) $f(0)=a>0$, $f'(0)=0$.

\indent (P2) $f(0)=0$, $(f^m)'(0)=0$.

\noindent We say that a profile $f$ has an \textbf{interface} at some point $\xi_0\in(0,\infty)$ if
$$
f(\xi_0)=0, \qquad (f^m)'(\xi_0)=0, \qquad f>0 \ {\rm on} \ (\xi_0-\delta,\xi_0), \ {\rm for \ some \ } \delta>0.
$$
\end{definition}
This definition agrees with the well-known notion of an interface for a solution. Indeed, a solution $u$ to Eq. \eqref{eq1} in the form \eqref{SSform} with a profile $f$ having an interface at $\xi_0\in(0,\infty)$, has a time-moving interface at $|x|=s(t)=(T-t)^{-\beta}\xi_0$ for any $t\in(0,T)$. This is why, also as in our previous works \cite{IS19a, IS19b, IS20}, we will be interested in the \emph{good profiles with interface} according to Definition \ref{def1}. The range $0<p<1$ will introduce two big novelties with respect to the previously studied cases. First of all, the analysis will differ according to the sign of the expression $m+p-2$. This is a feature of the range $p<1$ which has been noticed also in the non-weighted case \cite{dPV90, dPV91}, and it is strongly related to the existence of the interfaces: indeed, it is shown in \cite{dPV90} that when $m+p-2\geq0$, finite speed of propagation holds true, thus good profiles with interface are expected, while for $m+p-2<0$ the speed of propagation of the supports becomes infinite, thus the interfaces disappear and a solution (even if the initial condition $u_0$ is compactly supported) becomes positive immediately. The second important novelty in the range \eqref{range.exp} with respect to the results in our previous works is the existence of \textbf{two different interface behaviors}. Indeed, even a formal calculation on Eq. \eqref{SSODE} gives that a solution may develop an interface at a point $\xi_0>0$ in the following two forms:

$\bullet$ $f(\xi)\sim(\xi_0-\xi)^{1/(m-1)}$, as $\xi\to\xi_0$. This is the standard interface behavior inherited from the diffusion (the Barenblatt solutions to the standard porous medium equation have this type of contact at the interface) and will be called \emph{interface of Type I} in the text.

$\bullet$ $f(\xi)\sim(\xi_0-\xi)^{1/(1-p)}$, as $\xi\to\xi_0$. This is a new interface behavior that is interesting and will be analyzed in the paper. We will call it for convenience \emph{interface of Type II}.

We are now in a position to state, one by one, the main results of this paper. We begin with a general existence result of good profiles with both kinds of interfaces.
\begin{theorem}[Existence of good profiles with interface]\label{th.exist}
For any $m>1$, $p\in(0,1)$ such that $m+p>2$ and $\sigma>2(1-p)/(m-1)$, there exists at least one good profile with interface of Type I and one good profile with interface of Type II to Eq. \eqref{SSODE}, in the sense of the previous definitions.
\end{theorem}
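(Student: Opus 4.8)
The plan is to use a shooting argument from the interface point, which is the most robust way to obtain both types of interfaces. For the interface of Type I, I would set up the problem by prescribing an interface at some point $\xi_0>0$: look for a local solution of \eqref{SSODE} on $(\xi_0-\delta,\xi_0)$ with $f(\xi)\sim C(\xi_0-\xi)^{1/(m-1)}$. A standard fixed-point / contraction argument in a weighted norm should give existence and uniqueness of such a local trajectory, parametrized by $\xi_0$, whose leading constant $C=C(\xi_0)$ is determined by matching the diffusion term $(f^m)''$ against the linear term $\beta\xi f'$ (the reaction $\xi^\sigma f^p$ is lower order near this interface precisely because $m+p>2$, so $1/(m-1)<1/(1-p)$ is not the relevant comparison here — rather $p/(m-1)<1$ forces $\xi^\sigma f^p$ to be negligible against $(f^m)''$). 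Then I would continue this solution leftward (decreasing $\xi$) and study the dependence on the shooting parameter $\xi_0$. For Type II, the analogous local construction prescribes $f(\xi)\sim C(\xi_0-\xi)^{1/(1-p)}$, where now the reaction term $\xi^\sigma f^p$ balances $\beta\xi f'$ at leading order and the diffusion $(f^m)''$ is the negligible one; again a local existence result near $\xi_0$ is needed, with $C=C(\xi_0)$ explicitly computable.

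Next I would track, for each $\xi_0>0$, the backward-in-$\xi$ continuation and classify its behavior as $\xi\downarrow 0$. The key is a continuity/shooting dichotomy: for $\xi_0$ small the solution should reach $\xi=0$ still positive (behaving like property (P1), possibly after passing through a regime where the $-\alpha f$ and $\beta\xi f'$ terms dominate), while for $\xi_0$ large the solution should vanish before reaching the origin, i.e. develop a second interface or become singular in $(0,\xi_0)$. By continuous dependence on $\xi_0$ (which requires care because of the degeneracy at $f=0$, but is available away from it and extendable by the local uniqueness at the interface), the set of $\xi_0$ giving a profile that reaches $\xi=0$ with $f(0)>0$ and the set giving earlier vanishing are both open; a connectedness argument on $(0,\infty)$ then yields an intermediate $\xi_0^*$ whose trajectory hits $\xi=0$ exactly in the borderline manner. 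One must check that this borderline case is precisely a good profile, i.e. satisfies (P1) or (P2) rather than something pathological like $f(0^+)=+\infty$; ruling out blow-up at the origin for the critical trajectory, using the sign structure of \eqref{SSODE} and a barrier argument, is part of this step.

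To run the dichotomy cleanly I would want a priori bounds: an upper barrier showing trajectories cannot escape to $+\infty$ on $(0,\xi_0)$ (constructed from the explicit constant-in-$\xi$ or power supersolutions of \eqref{SSODE}, exploiting $\alpha>0$), and a lower control near $\xi=0$ showing that once $f$ and $(f^m)'$ are both small the trajectory is trapped. The monotonicity in $\xi_0$ of "position of the left interface" is the cleanest way to get a unique crossing, but even without monotonicity the topological argument suffices for the existence claim in the theorem. I would carry this out separately but in parallel for Type I and Type II, since the two local expansions at $\xi_0$ are genuinely different but the global shooting/continuation machinery is identical.

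The main obstacle I expect is the behavior of the continuation near the degenerate set $\{f=0\}$ and near $\xi=0$: establishing continuous dependence on $\xi_0$ up to and including the origin when the equation is doubly degenerate there (both because $m>1$ and because the coefficient $\beta\xi$ vanishes), and excluding the possibility that the critical trajectory oscillates or blows up at $\xi=0$ instead of settling into (P1)/(P2). This is where the delicate analysis of \eqref{SSODE} — likely via a change of variables to an autonomous-like system and a local phase-plane study at the origin — will be needed, and it is presumably where the bulk of the paper's technical work lies.
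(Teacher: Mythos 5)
Your plan for the Type I half is essentially the paper's own proof: backward shooting from a prescribed interface point $\xi_0$, local uniqueness of the Type I trajectory at each $\xi_0$ (Proposition \ref{prop.uniq}), the dichotomy that small $\xi_0$ yields profiles reaching $\xi=0$ positive with negative slope while large $\xi_0$ yields a change of sign before the origin (Propositions \ref{prop.close} and \ref{prop.far}, which in the paper require the special variables \eqref{PSvar3} in which $V$ is monotone along trajectories), and a sup/connectedness argument at the borderline value of $\xi_0$. You only assert the two shooting propositions, but the strategy matches. A small slip: at a Type I interface the reaction is negligible because its local order $(\xi_0-\xi)^{p/(m-1)}$ exceeds the order $(\xi_0-\xi)^{(2-m)/(m-1)}$ of the terms retained in the balance, i.e.\ exactly $p>2-m$; your criterion $p/(m-1)<1$ is not the relevant comparison (it fails, e.g., for $m=3/2$, $p=7/10$, which is inside the admissible range).

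The genuine gap is the Type II half, where you propose to run the identical shooting machinery; the two interfaces are not analogous from the ODE viewpoint. The Type II contact $f\sim C(\xi_0-\xi)^{1/(1-p)}$ is a balance of the two first-order terms $\beta\xi f'$ and $\xi^{\sigma}f^p$ with the diffusion $(f^m)''$ acting as a singular perturbation, and in the phase space \eqref{PSsyst1} every Type II interface is absorbed into the single strongly non-hyperbolic point $P_0$ (equivalently, in the variables \eqref{PSvar3}, into a line of degenerate equilibria $(0,0,v_0)$ with a double zero eigenvalue). Hence neither local existence-uniqueness of a trajectory with a Type II interface at a prescribed $\xi_0$ nor continuous dependence of its backward continuation on $\xi_0$ follows from a contraction or hyperbolic-linearization argument, and the paper neither proves nor uses any such statement: Proposition \ref{prop.uniq} is for Type I only. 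The paper's actual mechanism is different: a center-manifold reduction at $P_0$ plus Date's classification of homogeneous quadratic systems shows that the reduced flow \eqref{interm2} has an \emph{elliptic sector} at the origin, so there are orbits that go out of $P_0$ and return to it; going out produces the behavior \eqref{beh.02} as $\xi\to0$ (property (P2)) and returning produces the Type II interface, so good profiles are obtained in one stroke, with no shooting and no control on the interface location. In your scheme both ends of the sought trajectory are this same degenerate point, a homoclinic-type structure, and your claimed dichotomy in $\xi_0$ (small $\xi_0$ reaches the origin positive, large $\xi_0$ vanishes earlier) is unsupported; to justify the local solvability at the interface, the continuity in $\xi_0$, and the classification of the borderline trajectory you would in effect have to carry out the local sector analysis at $P_0$ anyway, which is precisely the center-manifold/Date argument that constitutes the paper's proof of Theorem \ref{th.type2}.
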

This shows that the patterns for blow-up to Eq. \eqref{eq1} may be different. We will discuss about this further when we introduce the \emph{interface equation} and we show that the two types of interface are \emph{strongly different} with respect to the interface equation. For now, we continue with our main results, particularizing them with respect to their behavior both at the starting point $\xi=0$ and at their interface point (Type I or Type II). We first have a general result concerning profiles with interface of Type II.
\begin{theorem}[Good profiles with interface of Type II]\label{th.type2}
For any $m>1$, $p\in(0,1)$ such that $m+p>2$ and $\sigma>2(1-p)/(m-1)$, there exist good profiles with interface of Type II and satisfying property (P2) in Definition \ref{def1}. More precisely, these good profiles behave near the origin in the following way:
\begin{equation}\label{beh.02}
f(\xi)\sim K\xi^{(\sigma+2)/(m-p)}, \ \ K>0, \qquad {\rm as} \ \xi\to0,
\end{equation}
and the corresponding self-similar solutions blow up in finite time $t=T$ only at the space infinity.
\end{theorem}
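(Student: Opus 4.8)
The plan is to work in a phase space obtained from a suitable change of variables that desingularizes the non-autonomous ODE \eqref{SSODE} near both $\xi=0$ and $\xi=\infty$, and to construct the desired good profile with interface of Type II as a connection between two specific critical points of the resulting autonomous system. A natural choice is the standard PME-type substitution
\begin{equation}\label{plan.ps}
X=\frac{\xi (f^m)'}{m f^{m}}\,\frac{1}{\text{(power of }\xi)}, \qquad Y=\frac{\xi^{2}f^{p-m}}{\text{(power of }\xi)}, \qquad Z=\frac{\text{(power of }\xi)}{f^{m-1}},
\end{equation}
(the precise powers of $\xi$ being fixed by homogeneity so that the system becomes autonomous), together with a new independent variable $\eta$ defined through $d\eta/d\xi>0$. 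In these coordinates, the behavior \eqref{beh.02} near $\xi=0$ corresponds to a hyperbolic critical point whose unstable manifold is one-dimensional and parametrized exactly by the constant $K>0$ in \eqref{beh.02}; I would first verify by direct linearization that this critical point exists, compute its eigenvalues, and check that the unstable direction lies in the region of phase space where $f>0$ and the profile is increasing (or has the expected monotonicity) for small $\xi$. This pins down a one-parameter family of local trajectories, the \emph{good profiles satisfying (P2) with the prescribed expansion at the origin}.

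The next step is to identify the critical point corresponding to an interface of Type II, i.e.\ the behavior $f(\xi)\sim (\xi_0-\xi)^{1/(1-p)}$ as $\xi\to\xi_0<\infty$. Plugging this ansatz into \eqref{SSODE} and balancing the dominant terms — here the key point is that, because $1/(1-p)>1/(m-1)$ is equivalent to $m+p>2$, the reaction term $\xi^{\sigma}f^p$ and the drift term $\beta\xi f'$ dominate the diffusion term $(f^m)''$ — shows that this is a consistent balance precisely in the regime $m+p>2$, and it corresponds to a second critical point (or a point at infinity in some of the variables) of the autonomous system, again with a well-understood local stable manifold. I would then show that the trajectory leaving the origin critical point along its unstable manifold cannot escape to the "bad" critical points (those giving either $f(0)=a>0$ with the wrong matching, or a Type~I interface, or non-compactly-supported behavior as $\xi\to\infty$) by constructing an invariant region: a box in phase space, bounded by nullclines and by the sign conditions coming from \eqref{SSODE}, that contains the outgoing trajectory and whose only exit is through the Type~II interface point. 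Standard ODE arguments (monotonicity of $X$, $Y$, $Z$ along the flow inside the box, plus a shooting/continuity argument in the parameter $K$ if needed) then force the connection to exist.

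Finally, once the profile $f$ is constructed, the statement that the associated self-similar solution \eqref{SSform} blows up in finite time \emph{only at the space infinity} follows from unwinding the self-similar change of variables: since $f$ is compactly supported, say $\supp f=[0,\xi_0]$, the support of $u(\cdot,t)$ is the ball $|x|\le (T-t)^{-\beta}\xi_0$, which expands to all of $\real$ as $t\to T$ because $\beta>0$; and on any fixed compact set $|x|\le R$ one checks, using \eqref{beh.02} near the (moving) origin in the self-similar variable together with the explicit form $u(x,t)=(T-t)^{-\alpha}f(|x|(T-t)^{\beta})$ and the fact that $\alpha>0$ while $(\sigma+2)/(m-p)$ is exactly the exponent making $u(x,t)$ stay bounded as $t\to T$ for fixed $x\ne 0$, that $u(x,t)$ remains bounded. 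A short separate computation handles the single point $x=0$. I expect the main obstacle to be the global step: ruling out that the trajectory emanating from the origin is captured by a competing critical point (in particular the one producing a Type~I interface, which coexists by Theorem~\ref{th.exist}), which will require a careful analysis of the relative position of the invariant manifolds and of the nullcline geometry in the three-dimensional phase space, rather than any single hard estimate.
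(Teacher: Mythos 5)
Your plan treats the two endpoint behaviors (the expansion \eqref{beh.02} at $\xi\to0$ and the Type II contact $f\sim(\xi_0-\xi)^{1/(1-p)}$) as two \emph{distinct hyperbolic} critical points of some autonomous system, to be joined by a heteroclinic orbit trapped in an invariant box. This is where the proposal has a genuine gap: in any phase space naturally adapted to this range of exponents (in the paper, the variables $X=\frac{m}{\alpha}\xi^{-2}f^{m-1}$, $Y=\frac{m}{\alpha}\xi^{-1}f^{m-2}f'$, $Z=\frac{m}{\alpha^2}\xi^{\sigma-2}f^{m+p-2}$, chosen precisely because the older variable $\xi^{\sigma}f^{p-1}$ sends interfaces to infinity when $p<1$ --- as would your suggested $Z\sim(\text{power of }\xi)/f^{m-1}$), \emph{both} behaviors land at the same critical point $P_0=(0,0,0)$, and this point is strongly non-hyperbolic: its linearization has a one-dimensional stable manifold and a two-dimensional center manifold. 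The existence statement is therefore not a transversal heteroclinic connection at all, but the existence of orbits that go out of $P_0$ and come back into it along the center manifold. The paper proves this by reducing to the center manifold, keeping the homogeneous quadratic part of the reduced flow, and invoking Date's classification of planar homogeneous quadratic systems: computing the invariants $D$, $K_2$, $K_3$ one lands in the case of an \emph{elliptic sector} at the origin, which is exactly the family of homoclinic-type orbits sought; one then rules out that the asymptotic relation $\frac{\beta}{\alpha}Y-X+Z\sim0$ is attained as $\xi\to\infty$ (via an explicit integration of the quadratic system) and identifies the outgoing end with \eqref{beh.02} and the incoming end with a Type II interface, using the explicit solution $f_0(\xi)=\xi^{(\sigma+2)/(m-p)}\left[C-(1-p)\xi^{\sigma}\right]^{1/(1-p)}$ of the limiting equation. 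None of this degenerate-point machinery is anticipated in your proposal, and your central assumptions --- hyperbolicity of the relevant points, a one-dimensional unstable manifold ``parametrized by $K$'' (a one-dimensional manifold carries a single orbit, not a one-parameter family of distinct local solutions), and an invariant region whose only exit is the Type II point --- are not verified and are not available in the variables you sketch. Your balance argument for the ansatz $f\sim(\xi_0-\xi)^{1/(1-p)}$ only checks formal consistency; it does not produce the orbit.

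The final step of your proposal (deducing from compact support and from \eqref{beh.02} that blow-up occurs only at space infinity, using $\beta>0$ and the identity $\alpha=\beta(\sigma+2)/(m-p)$ so that $u(x,t)\to K|x|^{(\sigma+2)/(m-p)}$ for fixed $x$) is correct and matches what the statement requires, but it is the easy part. The core of the theorem --- producing a profile that simultaneously has the behavior \eqref{beh.02} at the origin and a Type II interface --- requires either the elliptic-sector analysis at the degenerate point $P_0$ (center manifold plus the classification of quadratic homogeneous flows) or some genuinely new mechanism replacing it, and your proposal supplies neither.
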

This is an interesting result completing Theorem \ref{th.exist} for profiles with interface of Type II. Let us notice that in our previous papers \cite{IS19a, IS19b} we obtained a similar result for equations presenting either algebraic (power-like) or exponential spatial decay as $|x|\to\infty$, which were good self-similar solutions too but without interfaces. It seems that for $p>1$, the "ancient" spatial decay as $|x|\to\infty$ (which exists for $p\geq1$) converts into the behavior of interface of Type II, due to the change of sign of $1-p$. However, the co-existence of different interfaces is a very noticeable phenomenon in our case.

In the same line as in our previous works, a \emph{strong difference} of the graph and behavior of profiles holds true with respect to the magnitude of $\sigma$. Indeed, when $\sigma$ is sufficiently closer to its lower limit $2(1-p)/(m-1)$, we find that all the profiles $f(\xi)$ starting with $f(0)=0$ form interfaces of Type II. More precisely:
\begin{theorem}[Good profiles with interface for $\sigma$ small]\label{th.small}
For any $m>1$, $p\in(0,1)$ such that $m+p>2$, we have the following results:

(a) There exists $\sigma_0\in(2(1-p)/(m-1),\infty)$ such that for any $\sigma\in(2(1-p)/(m-1),\sigma_0)$, \emph{all} the good profiles satisfying property (P2) in Definition \ref{def1} present an interface of Type II. Moreover, for any $\sigma\in(2(1-p)/(m-1),\sigma_0)$ there exist good profiles with interface of Type I and satisfying property (P1) in Definition \ref{def1}. The corresponding self-similar solutions to the latter profiles blow up globally (that is, at any point $x\in\real$) in finite time $t=T$.

(b) There exists $\sigma_*\in(2(1-p)/(m-1),\infty)$ such that when $\sigma=\sigma_*$, there exists a unique profile $f_*(\xi)$ to Eq. \eqref{SSODE} such that
\begin{equation}\label{beh.01}
f_*(\xi)\sim\left[\frac{m-1}{2m(m+1)}\right]^{1/(m-1)}\xi^{2/(m-1)}, \qquad {\rm as} \ \xi\to0,
\end{equation}
and $f_*$ presents an interface of Type I at some $\xi_0\in(0,\infty)$. The corresponding self-similar solutions to this profile blow-up globally at time $t=T$.
\end{theorem}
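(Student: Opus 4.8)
The plan is to treat Theorem \ref{th.small} as a continuity-and-compactness argument on top of the phase-plane analysis that underlies Theorems \ref{th.exist}--\ref{th.type2}, tracking how the relevant trajectories move as $\sigma$ varies. First I would recast Eq. \eqref{SSODE} as an autonomous three-dimensional dynamical system via the usual change of variables (writing $f$, $\xi f'/f$, and a power of $\xi$ as new unknowns, with a new independent variable $\eta=\log\xi$), so that the origin $\xi=0$, the interface point $\xi_0$, and the space infinity $\xi=\infty$ all become critical points or invariant manifolds. In this system the profiles satisfying (P2) emanate from one critical point $P_0$ along a well-defined one-dimensional unstable manifold, the profiles satisfying (P1) form a one-parameter family leaving another critical point, and the two interface behaviors of Type I and Type II correspond to arrival at two distinct critical points on the boundary of the phase space, distinguished by the slope with which the trajectory enters. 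The key quantitative observation is that as $\sigma\downarrow 2(1-p)/(m-1)$ the self-similarity exponents $\alpha,\beta$ in \eqref{SSexp} blow up, and one of the coefficients in the linearization degenerates, which forces every bounded orbit leaving $P_0$ to be captured by the Type II critical point; conversely for $\sigma$ large the Type I critical point becomes the dominant attractor (this is precisely the dichotomy already exploited in \cite{IS19a, IS19b}).

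For part (a), the plan is: (i) analyze the linearization at the critical point from which (P2)-profiles emerge and show its unstable manifold is one-dimensional, so there is essentially a single (P2) profile to follow for each $\sigma$; (ii) show by a trapping-region argument — constructing an invariant region in phase space whose boundary the orbit cannot cross — that for $\sigma$ close enough to the lower bound this orbit must terminate at the Type II critical point rather than the Type I one, which by definition of $\sigma_0$ as the infimum of $\sigma$ for which Type I becomes possible yields the first claim; (iii) for the existence of (P1)-profiles with a Type I interface in the same range, re-run a shooting argument in the parameter $a=f(0)>0$: for $a$ small the trajectory should behave like a sub-solution and reach the interface manifold transversally (Type I), for $a$ large it should overshoot, and an intermediate-value/connectedness argument on the set of "good" $a$'s produces at least one orbit connecting (P1) to the Type I interface point; (iv) read off global blow-up from the fact that a (P1) profile has $f(0)=a>0$, hence the corresponding self-similar solution $u(x,t)=(T-t)^{-\alpha}f(|x|(T-t)^\beta)$ is strictly positive and tends to $+\infty$ at every fixed $x$ as $t\uparrow T$.

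Part (b) is where the real work sits. The value $\sigma_*$ should be defined as a bifurcation threshold: the supremum (or infimum) of the set of $\sigma$ for which the unstable manifold of the special critical point $Q$ associated with the exact power behavior \eqref{beh.01} — note the constant $[(m-1)/(2m(m+1))]^{1/(m-1)}$ is exactly the value making the leading-order balance $(f^m)'' \approx \alpha f$ work with exponent $2/(m-1)$ — hits the Type I interface manifold. The plan is to show this set is nonempty and open (by the implicit function theorem / structural stability of transversal connections) and that its complement is also open, whence the boundary point $\sigma=\sigma_*$ exists; at that $\sigma_*$ the connecting orbit from $Q$ to the Type I critical point persists by a closure/compactness argument (orbits cannot escape to infinity because of the a priori bounds from the trapping region), giving existence. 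Uniqueness of $f_*$ should follow from the one-dimensionality of the unstable manifold at $Q$ together with a monotonicity or no-intersection lemma for distinct profiles (two profiles with the same germ \eqref{beh.01} would have to coincide; two with different leading constants cannot both satisfy the nonlinear constraint that pins the constant). Global blow-up again follows since \eqref{beh.01} forces $f_*>0$ near $0$ and in fact $f_*(0)$ may be zero but the solution still becomes unbounded at $x=0$ because $f_*(\xi)\sim c\,\xi^{2/(m-1)}$ gives $u(0,t)=(T-t)^{-\alpha}f_*(0)$ — here one must check the exponent bookkeeping carefully: the combination $\alpha - 2\beta/(m-1)$ should come out positive in the relevant $\sigma$-range, so that $u(x,t)\sim (T-t)^{-(\alpha-2\beta/(m-1))}(c|x|^{2/(m-1)})\to\infty$ for every $x\neq 0$, and a separate limiting argument handles $x=0$. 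The main obstacle I expect is step (ii)/(b): constructing explicit invariant regions in the 3D phase space that are robust in $\sigma$ and that correctly separate the Type I from the Type II critical points — the two interface exponents $1/(m-1)$ and $1/(1-p)$ are close when $m+p$ is near $2$, so the geometry degenerates exactly near the hypothesis $m+p>2$, and keeping the trapping argument uniform down to $\sigma$ near $2(1-p)/(m-1)$ (where the vector field itself loses smoothness in the parameter) will require care.
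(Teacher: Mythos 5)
Your overall strategy (phase-space reformulation, trapping regions for $\sigma$ near the lower bound, a topological argument in $\sigma$ for part (b)) is in the same spirit as the paper, but two of your structural assumptions are wrong and they break the argument. First, for part (a) you assume the (P2)-profiles ``emanate from one critical point $P_0$ along a well-defined one-dimensional unstable manifold, so there is essentially a single (P2) profile to follow''. This is false in the relevant phase space: in the system \eqref{PSsyst1} the point $P_0$ is non-hyperbolic with a two-dimensional center manifold on which an \emph{elliptic sector} forms (the Date classification in Section \ref{sec.type2}), so infinitely many orbits go out of $P_0$, each containing a (P2)-profile with behavior \eqref{beh.02}; in addition the unique orbit out of $P_2$ (Lemma \ref{lem.P2}) also carries a (P2)-profile, with behavior \eqref{beh.01}. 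The statement ``\emph{all} good profiles satisfying (P2) have an interface of Type II'' therefore requires barriers that capture this entire infinite family simultaneously, which is exactly what the paper's Proposition \ref{prop.small} does (planes, a hyperbolic cylinder, and monotonicity of the coordinates $U,V$ in the half-space $\{Y\le 0\}$ for the system \eqref{PSsyst3}); tracking ``the'' single orbit, as in your step (ii), cannot yield the ``all'' claim, and your definition of $\sigma_0$ as ``the infimum of $\sigma$ for which Type I becomes possible'' is then circular. (Your step (iii) is also heavier than needed: once all (P2)-profiles are Type II for $\sigma<\sigma_0$, the Type I good profile already provided by Theorem \ref{th.exist} must satisfy (P1), so no new shooting in $a=f(0)$ is required -- and your small-$a$/large-$a$ dichotomy is asserted without proof.)

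Second, your plan for part (b) is topologically incoherent and misses the key third regime. You propose to show that the set of $\sigma$ for which the orbit out of $P_2$ hits the Type I interface point is open (``by structural stability of transversal connections'') and that its complement is open, and to extract $\sigma_*$ as a boundary point; but a nonempty open set with open complement in the connected interval $(2(1-p)/(m-1),\infty)$ would be the whole interval, contradicting part (a), and in fact this connection cannot be transversal: it is a heteroclinic orbit from the one-dimensional unstable manifold of $P_2$ into the two-dimensional stable manifold of $P_1$ in a three-dimensional space, hence a codimension-one, non-robust phenomenon -- precisely why it is expected only at exceptional values such as $\sigma_*$. The paper's argument is the opposite of yours: it shows that the set $A$ of $\sigma$ for which the orbit from $P_2$ enters $P_0$ (Type II) is open (using that $P_0$ acts as an attractor from the side $\{Y<0\}$ of the elliptic sector) and nonempty for small $\sigma$ (Proposition \ref{prop.small}), that the set $C$ for which the orbit enters the stable node $Q_3$ (a sign change, not an interface) is open and nonempty for large $\sigma$ (Proposition \ref{prop.large}), and concludes by connectedness that some $\sigma_*$ lies in neither, where the orbit must enter $P_1$. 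Your proposal never establishes the large-$\sigma$ alternative -- indeed your introductory claim that ``for $\sigma$ large the Type I critical point becomes the dominant attractor'' for this orbit is not what happens (for large $\sigma$ the orbit from $P_2$ ends in $Q_3$, while the Type I interfaces are then taken by orbits from $P_0$, cf.\ Theorem \ref{th.large}) -- so your scheme has no mechanism to produce $\sigma_*$. The uniqueness part of (b), on the other hand, is fine and matches the paper: it follows from the uniqueness of the orbit leaving $P_2$ together with the identification of \eqref{beh.01} as the behavior carried by that orbit, and your exponent check $\alpha-2\beta/(m-1)>0$ for global blow-up is correct.
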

For $\sigma$ sufficiently large things are different. There will be no longer good profiles with interface presenting the behavior in \eqref{beh.01} at $\xi=0$. But we can characterize more precisely the profiles with interface of Type I.
\begin{theorem}[Good profiles with interface for $\sigma$ large]\label{th.large}
For any $m>1$, $p\in(0,1)$ such that $m+p>2$, there exists $\sigma_1>2(1-p)/(m-1)$ sufficiently large such that for any $\sigma\in(\sigma_1,\infty)$, there exist blow-up profiles satisfying property (P2) in Definition \ref{def1}, presenting the behavior \eqref{beh.02} as $\xi\to0$ and having an interface of Type I at some point $\xi=\xi_0\in(0,\infty)$. The corresponding self-similar solutions to these profiles blow up in finite time $t=T$ only at space infinity.
\end{theorem}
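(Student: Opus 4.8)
The plan is to run a phase-plane / shooting argument in the spirit of the proofs of Theorems \ref{th.exist} and \ref{th.small}, but now tracking how the relevant objects move as $\sigma\to\infty$. First I would rewrite Eq. \eqref{SSODE} as an autonomous system by introducing the standard change of variables adapted to the scaling $\xi^{(\sigma+2)/(m-p)}$; concretely, set $X=f^{m-1}$, $Y=\xi (f^m)'/f^m$ (or a similar pair that the paper already uses in the earlier proofs), together with a logarithmic independent variable $\eta=\ln\xi$, so that the profiles satisfying (P2) with the behavior \eqref{beh.02} correspond precisely to the trajectories leaving a specific critical point $P_0$ of the system as $\eta\to-\infty$ along a well-identified eigendirection. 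The interface of Type I corresponds to trajectories reaching the critical point associated with the Barenblatt-type contact $f\sim(\xi_0-\xi)^{1/(m-1)}$, while the interface of Type II corresponds to a different critical point; this is exactly the dichotomy already established when proving Theorems \ref{th.exist} and \ref{th.type2}, so I would cite those constructions rather than redo them.

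The core of the argument is a monotonicity-in-$\sigma$ analysis of the unique trajectory emanating from $P_0$. I would parametrize the family of profiles $f_\sigma$ issued from \eqref{beh.02}, and show that the ``first interface'' they develop is of Type II for $\sigma$ small (this is Theorem \ref{th.small}(a)) but of Type I for $\sigma$ large. The mechanism is that the weight term $\xi^\sigma f^p$ becomes overwhelmingly strong for large $\xi$ when $\sigma$ is large, which forces the profile to be pushed down faster and makes the diffusion-driven contact $1/(m-1)$ win over the reaction-driven contact $1/(1-p)$ at the interface point; quantitatively, one compares the two candidate interface equations (the ones the paper promises to introduce when discussing the ``interface equation'') and checks that for $\sigma>\sigma_1$ only the Type I balance is consistent with a trajectory coming from $P_0$. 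I would make this rigorous by a sign/barrier argument: construct an explicit supersolution or an invariant region in the phase plane, depending on a large parameter $\sigma$, that traps the trajectory from $P_0$ and channels it into the Type I critical point; the threshold $\sigma_1$ is then defined as the infimum of the $\sigma$ for which this invariant region exists. The statement that blow-up occurs only at spatial infinity follows automatically, since property (P2) with behavior \eqref{beh.02} forces $f(\xi)\to0$ as $\xi\to0$ fast enough and the self-similar change of variables \eqref{SSform} then shows, exactly as in Theorem \ref{th.type2}, that $u(x,t)$ stays bounded for every fixed $x$ as $t\to T$.

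The main obstacle I anticipate is the uniform control of the trajectory from $P_0$ over the whole $\xi$-range as $\sigma$ varies: near $\xi=0$ the behavior \eqref{beh.02} is robust, but one must rule out that, before reaching a Type I interface, the trajectory instead escapes to the critical point giving a Type II interface, or to the ``tail'' critical point corresponding to non-compactly-supported behavior (which, by the $m+p>2$ hypothesis and the finite-speed-of-propagation discussion in the introduction, should not occur but still needs to be excluded in the phase plane). Handling this cleanly will require a careful ordering of the relevant critical points and separatrices and an argument that the separatrix structure degenerates in a controlled way as $\sigma\to\infty$ — essentially a perturbation/limiting argument where the $\sigma=\infty$ ``limit system'' has an explicit, degenerate behavior that one then shows persists for $\sigma$ finite and large. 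A secondary technical point is pinning down that the constant $K>0$ in \eqref{beh.02} can indeed be chosen (one-parameter shooting in $K$, or rescaling invariance of \eqref{SSODE} in the variable $\xi$ reducing it to a fixed normalization), but this is the same bookkeeping as in the proof of Theorem \ref{th.type2} and should carry over verbatim.
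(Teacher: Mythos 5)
There is a genuine gap, and it sits exactly at the heart of the argument. Your plan rests on ``the unique trajectory emanating from $P_0$'' and on the claim that for $\sigma>\sigma_1$ ``only the Type I balance is consistent with a trajectory coming from $P_0$''. Both points are wrong in this setting: the point $P_0$ is strongly non-hyperbolic and, as shown in Section \ref{sec.type2} via Date's classification, it carries an \emph{elliptic sector}, so there is an infinite (one-parameter) family of orbits going out of $P_0$ with the behavior \eqref{beh.02}; moreover Theorem \ref{th.type2} guarantees that orbits from $P_0$ with a Type II interface exist for \emph{every} $\sigma>2(1-p)/(m-1)$, hence they do not disappear for $\sigma$ large and no interface-equation comparison can rule them out. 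Consequently your proposed mechanism -- an invariant region or supersolution that ``traps the trajectory from $P_0$ and channels it into the Type I critical point'' -- cannot work even in principle: the Type I interface corresponds to entering the saddle $P_1$ along its two-dimensional stable manifold (Lemma \ref{lem.P1}), which is a codimension-one event; within the one-parameter family of orbits leaving $P_0$ such connections occur only at isolated parameter values, i.e.\ they form the \emph{boundary} between open sets of orbits, and an open trapping region can only capture orbits ending at attractor-like sets ($P_0$ from the half-space $\{Y<0\}$, or $Q_3$), never isolate a saddle connection. A secondary inaccuracy: Eq.~\eqref{SSODE} has no $\xi$-rescaling invariance for $\sigma>2(1-p)/(m-1)$, so the constant $K$ in \eqref{beh.02} cannot be normalized away; it is precisely the parameter of the family of orbits out of $P_0$.

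The paper's proof supplies exactly the two ingredients your sketch is missing. First, Proposition \ref{prop.large} (a long barrier computation in the auxiliary system \eqref{PSsyst4}, imported from \cite{IS19b}, using the planes \eqref{plane1}--\eqref{plane2} and the monotonicity Lemma \ref{lem.monot}) shows that for $\sigma$ large the orbit from $P_2$ and, by continuity through the $\{Z=0\}$ connection of Lemma \ref{lem.Z0}, also some orbits from $P_0$ end at the stable node $Q_3$, i.e.\ develop a sign change rather than an interface. Second, a topological ``three-sets'' argument within the family of orbits leaving $P_0$ at fixed large $\sigma$: the set of orbits re-entering $P_0$ (Type II) is nonempty for all $\sigma$ and open because the elliptic-sector structure makes $P_0$ behave like an attractor from $\{Y<0\}$, the set of orbits reaching $Q_3$ is nonempty for $\sigma>\sigma_1$ and open because $Q_3$ is a stable node, and connectedness of the family then forces at least one orbit to enter neither, which by the classification of critical points (Lemmas \ref{lem.P1}--\ref{lem.Q4}) must enter $P_1$ and hence carries the desired profile with behavior \eqref{beh.02} at the origin and a Type I interface. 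Your final observation that such profiles blow up only at space infinity is correct and agrees with the paper, since $\alpha=\beta(\sigma+2)/(m-p)$ makes $u(x,t)$ bounded for each fixed $x$ as $t\to T$; but without the intermediate-value step above the existence claim itself is not established.
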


\noindent \textbf{Remark.} A first interesting point to be emphasized is that \emph{finite time blow-up occurs} also for $m>1$ and $p<1$. This is due to the strong influence of the weight $|x|^{\sigma}$, since it is not true in the non-weighted case (that is, when $\sigma=0$). Moreover, the blow-up set strongly differs with $\sigma$: for $\sigma$ relatively small (close to the lower limit $2(1-p)/(m-1)$), both self-similar solutions presenting a global blow-up and self-similar solutions presenting blow-up only at the space infinity (while they remain bounded at any $|x|$ fixed) do exist, while for $\sigma$ very large it is likely that \emph{all} self-similar solutions blow up at the space infinity. Such a difference with respect to the blow-up behavior was deduced also in our previous papers \cite{IS19a, IS19b} dealing with exponents $p=1$, respectively $1<p<m$, where we rigorously define the blow-up set and describe in greater detail the blow-up at the space infinity.

Finally, let us notice that all the previous theorems hold true in the hypothesis that $m+p>2$. Due to significant differences in the techniques of the proofs and in some of the results, we separate the very critical case $m+p=2$ to a companion paper \cite{IS20crit}. However, the subcritical case $m+p<2$ is very simple and striking:
\begin{theorem}[Non-existence for $m+p<2$]\label{th.non}
Let $m>1$, $p\in(0,1)$ such that $m+p<2$ and let $\sigma>2(1-p)/(m-1)$. Then there exist \emph{no good blow-up profiles} with or without interface.
\end{theorem}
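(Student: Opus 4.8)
The plan is to derive a contradiction by analyzing the sign structure of the ODE \eqref{SSODE} near a hypothetical interface point or near infinity, exploiting the fact that when $m+p<2$ the exponent $1/(1-p)$ is \emph{smaller} than $1/(m-1)$, so the two candidate interface behaviors are incompatible, and in fact neither can occur. First I would recall that any good profile $f$ must start (by Definition \ref{def1}) either at $f(0)=a>0$ with $f'(0)=0$, or at $f(0)=0$ with $(f^m)'(0)=0$; in the latter case a local analysis of \eqref{SSODE} near $\xi=0$ forces the behavior $f(\xi)\sim K\xi^{(\sigma+2)/(m-p)}$ as $\xi\to0$, exactly as in \eqref{beh.02}, since the reaction and diffusion terms must balance. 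In both cases $f$ is positive and smooth on a maximal interval $(0,\xi_{\max})$, and one shows $f$ is eventually decreasing (the standard argument: if $f'>0$ somewhere to the right, the term $-\alpha f+\beta\xi f'+\xi^\sigma f^p$ eventually becomes positive, forcing $(f^m)''<0$, which cannot sustain growth indefinitely given $\alpha>0$).

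The core of the argument is the interface/decay alternative. A \emph{good blow-up profile with interface} would have to reach zero at a finite $\xi_0$ with $(f^m)'(\xi_0)=0$, and the local ODE analysis at $\xi_0$ allows only the two exponents $1/(m-1)$ (Type I) or $1/(1-p)$ (Type II). For Type II one needs the reaction term $\xi^\sigma f^p$ to balance the diffusion term $(f^m)''$ near $\xi_0$: writing $f\sim A(\xi_0-\xi)^\theta$, the diffusion term scales like $(\xi_0-\xi)^{m\theta-2}$ and the reaction term like $(\xi_0-\xi)^{p\theta}$, so $m\theta-2=p\theta$, giving $\theta=2/(m-p)$ — but this does not match $1/(1-p)$ unless... wait, the resolvent exponent for the dominant-balance at an interface (where the linear terms $-\alpha f$ and $\beta\xi f'$ are lower order) is obtained differently: balancing $(f^m)''$ against $\beta\xi_0 f'$ gives Type I with $\theta=1/(m-1)$, while balancing $(f^m)''$ against $\xi^\sigma f^p$ with $f^m\sim(\xi_0-\xi)^{m\theta}$ and requiring the \emph{next} term to match gives Type II with $\theta=1/(1-p)$; for this Type II balance to be consistent one needs $m\theta-2>p\theta$, i.e. $\theta>2/(m-p)$, equivalently $1/(1-p)>2/(m-p)$, i.e. $m-p>2(1-p)$, i.e. $m+p>2$. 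Thus when $m+p<2$ \emph{Type II interfaces are impossible}. Similarly, Type I at a finite $\xi_0$ requires checking the contribution of $\xi^\sigma f^p$: with $\theta=1/(m-1)$ one has $p\theta=p/(m-1)$ and $m\theta-2=(m-2m+1)/(m-1)\cdot(-1)$... concretely $m\theta-2=1/(m-1)+\theta-2 = (2-m)/(m-1)$, and the reaction term is \emph{lower order} than the balanced pair precisely when $p/(m-1)>(2-m)/(m-1)$ wait — when $m+p<2$ one has $2-m>p$ so $p\theta < $ the diffusion order, meaning the reaction term $\xi^\sigma f^p \sim (\xi_0-\xi)^{p/(m-1)}$ actually \emph{dominates} and cannot be a perturbation, so the Type I ansatz is also inconsistent. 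Hence no profile can reach zero at a finite point.

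It remains to rule out good profiles \emph{without} interface, i.e. profiles that stay positive on all of $[0,\infty)$. For such a profile, since $f$ is eventually positive and decreasing, either $f\to L\geq0$ as $\xi\to\infty$. If $L>0$ then $\xi^\sigma f^p\to\infty$ while $-\alpha f+\beta\xi f'$ — here $\xi f'$ could be large negative; but balancing in \eqref{SSODE} as $\xi\to\infty$ shows $(f^m)''$ must absorb a term growing like $\xi^\sigma$, which is incompatible with $f$ bounded below (integrating twice would force $f$ to grow or become negative). If $L=0$, one performs the asymptotic analysis of \eqref{SSODE} at infinity: the balance $\beta\xi f'\approx\alpha f$ would give $f\sim c\xi^{\alpha/\beta}=c\xi^{(\sigma+2)/(m-p)}$, which \emph{increases} (contradiction with $f\to0$), while the balance $(f^m)''\approx -\xi^\sigma f^p$ with $f\sim c\xi^{-\gamma}$ forces $m\gamma$... solving $-m\gamma-2 = \sigma-p\gamma$ gives $\gamma(p-m)=\sigma+2$, i.e. $\gamma=-(\sigma+2)/(m-p)<0$, again increasing — contradiction. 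Every possible dominant balance at infinity is inconsistent with a positive decreasing profile decaying to $0$, and a positive profile cannot stay bounded away from $0$ either. Therefore no good blow-up profile of any kind exists when $m+p<2$. The main obstacle I anticipate is making the local dominant-balance arguments at the interface fully rigorous — in particular, proving that the formal exponents $1/(m-1)$ and $1/(1-p)$ are the \emph{only} admissible ones and that the sign of $m+p-2$ genuinely obstructs both (rather than merely the formal ansatz failing); this likely requires a careful ODE shooting or energy argument, or a reduction to an autonomous system via a change of variables as in \cite{IS19a,IS19b} where the equilibria and their local behavior can be read off directly.
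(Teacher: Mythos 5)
Your proposal is a formal dominant-balance sketch, and the part you yourself flag as ``the main obstacle'' is not a technicality: it is essentially the entire content of the paper's proof. The paper opens Section~\ref{sec.non} with exactly the kind of formal considerations you make (the interface exponents become incompatible when $m+p-2$ changes sign) and then explicitly discards them as non-rigorous, because for $m+p<2$ the phase-space objects that encoded the interface ($P_0$, $P_1$ in the system \eqref{PSsyst1}) no longer exist in the same form. The rigorous argument requires a new ingredient you do not supply: the change of variables \eqref{PSvar5}, chosen so that the resulting system \eqref{PSsyst5} has \emph{no finite critical points} and so that, since $m+p<2$ forces $\sigma>2(1-p)/(m-1)>2$, any interface behavior at finite $\xi_0$ \emph{and} any decaying tail as $\xi\to\infty$ must appear at a critical point at infinity with $\overline{Z}=0$. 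The proof then passes to the Poincar\'e hypersphere and classifies all seven critical points at infinity, checking that each one carries only behaviors at $\xi=0$, sign changes $f(\xi_0)=0$ with $(f^m)'(\xi_0)\neq0$, or orbits containing no profiles at all; non-existence follows by exhaustion. Gesturing at ``a reduction to an autonomous system as in \cite{IS19a,IS19b}'' does not close this gap, because the systems of those papers (and the system \eqref{PSsyst1} of the present one) are precisely the ones that break down here: the variable $\xi^{\sigma}f^{p-1}$ blows up and the relevant behaviors collapse into the degenerate point $Q_4$.

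Even at the formal level your case analysis is incomplete. At a putative interface you only test the two exponents $1/(m-1)$ and $1/(1-p)$, but when $m+p<2$ the diffusion--reaction balance $\theta=2/(m-p)$ becomes \emph{order-consistent} (the transport term is then of higher order precisely because $2/(m-p)<1/(m-1)$), and it is excluded only by a sign argument --- $(f^m)''$ and $\xi^{\sigma}f^p$ are both positive near $\xi_0$ --- which you never make; nor do you exclude non-power local behaviors. Likewise your analysis as $\xi\to\infty$ checks only two power-law balances, omits others (e.g.\ transport--reaction, which reproduces the finite-interface scenario rather than a tail), ignores exponential decay (which does occur as a tail behavior in the related range studied in \cite{IS19b}, so it cannot be dismissed silently), and rests on an unproved ``eventually decreasing'' claim. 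Finally, the assertion that property (P2) forces the behavior \eqref{beh.02} is unjustified --- for $m+p>2$ the analogous statement is false, since the profile of Theorem~\ref{th.small}(b) satisfies (P2) with the behavior \eqref{beh.01} --- although this point is not essential to the non-existence argument.
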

The fact that blow-up profiles with interface do no longer exist in the case $m+p<2$ was expected, since it is known \cite{dPV90, dPV91} that solutions propagate with infinite speed in this case. But the general non-existence result is very striking, as there is no different behavior at all (such as a tail as $|x|\to\infty$ for example) to replace the interface behavior in this case. The result in Theorem \ref{th.non} is strongly related to a more general non-existence result for a similar equation that will be given in \cite{IMS20}, but we nevertheless give a full proof of the non-existence for self-similar profiles using the techniques of this paper.

\medskip

\noindent \textbf{The interface equation. Differences between interfaces of Type I and Type II}. We discuss here, at a formal level, the interface equation satisfied by the interfaces of Type I and of Type II when $m>1$, $p\in(0,1)$ and $m+p>2$ to show the difference of behavior of the two interfaces. Let us recall that for a compactly supported and radially symmetric solution $u$ to Eq. \eqref{eq1}, the \emph{interface (or free boundary)} of $u$ is defined as the supremum of its support at time $t>0$
$$
s(t)=\sup\{|x|: x\in\real, u(x,t)>0\}, \qquad t>0.
$$
For solutions presenting an interface of Type I, we pass as usual to the equation for the \emph{pressure variable}
$$
v(x,t)=\frac{m}{m-1}u^{m-1}(x,t)
$$
and obtain the equation satisfied by $v$ (similar to \cite{dPV90, dPV91})
\begin{equation}\label{pressure}
v_t=(m-1)vv_{xx}+v_{x}^2+m\left(\frac{m-1}{m}\right)^{(m+p-2)/(m-1)}|x|^{\sigma}v^{(m+p-2)/(m-1)}.
\end{equation}
Starting from the obvious equality $v(s(t),t)=0$ and formally differentiating with respect to $t$ we readily get that
$$
s'(t)=-\frac{v_t(s(t),t)}{v_x(s(t),t)}.
$$
Replacing now $v_t$ by the right-hand side in Eq. \eqref{pressure} and working on self-similar profiles it is easy to check that the terms in $v_{xx}$ and the last one vanish at $s(t)$ (since $m+p-2>0$) and we remain with the \emph{standard interface equation}
\begin{equation}\label{interfeq1}
s'(t)=-v_{x}(s(t),t),
\end{equation}
similar to the one fulfilled for example by the solutions to the standard porous medium equation, thus the reaction term involving $\sigma$ plays no role here. On the other hand, for an interface of Type II, we follow an idea used for traveling wave solutions stemming from Herrero and V\'azquez \cite{HV88} (see also \cite{dPS00, dPS02}) and introduce the following change of function specific to the range $p<1$
$$
w(x,t)=\frac{1}{1-p}u(x,t)^{1-p}.
$$
The equation solved by $w$ is
\begin{equation}\label{pres2}
\begin{split}
w_t&=\frac{m}{1-p}(1-p)^{(m-p)/(1-p)}w^{(m-1)/(1-p)}w_{xx}\\
&+\frac{m(m+p-1)}{(1-p)^2}(1-p)^{(m-p)/(1-p)}w^{(m+p-2)/(1-p)}(w_x)^2+x^{\sigma},
\end{split}
\end{equation}
thus, by finding again that
$$
s'(t)=-\frac{w_t(s(t),t)}{w_x(s(t),t)}
$$
and replacing $w_t$ with the right-hand side of \eqref{pres2}, we readily find that on the self-similar solutions the first two terms cancel at the interface point $(s(t),t)$ and we are left with the (free) last term. Thus the interface equation for interfaces of Type II is
\begin{equation}\label{interfeq2}
s'(t)=-\frac{s(t)^{\sigma}}{w_{x}(s(t),t)},
\end{equation}
which reminds of the one obtained for the interface of the self-similar solutions to the reaction-convection-diffusion equation in \cite{dPS02} but in our case it also \emph{strongly depends on $\sigma$}. We notice that equations \eqref{interfeq1} and \eqref{interfeq2} are very different, which shows that the Type II interface behaviors is novel and qualitatively interesting, while the Type I behavior inherits the properties from the one with $\sigma=0$.

\section{The phase space when $m+p>2$. Proof of Theorem \ref{th.type2}}\label{sec.type2}

We will consider from now on, unless if the contrary is specified, that $m+p>2$. The main tool in the proofs of the main results of the present work is a thorough analysis of a phase space associated to an autonomous dynamical system which is equivalent to the non-autonomous equation of profiles \eqref{SSODE}. Thus we transform Eq. \eqref{SSODE} into an autonomous quadratic dynamical system by letting
\begin{equation}\label{PSvar1}
X(\eta)=\frac{m}{\alpha}\xi^{-2}f^{m-1}(\xi), \ Y(\eta)=\frac{m}{\alpha}\xi^{-1}f^{m-2}(\xi)f'(\xi), \ Z(\eta)=\frac{m}{\alpha^2}\xi^{\sigma-2}f^{m+p-2}(\xi),
\end{equation}
where we recall that $\alpha$ (and also $\beta$) is defined in \eqref{SSexp} and the new independent variable $\eta=\eta(\xi)$ is defined through the differential equation
$$
\frac{d\eta}{d\xi}=\frac{\alpha}{m}\xi f^{1-m}(\xi).
$$
The differential equation \eqref{SSODE} transforms into the system
\begin{equation}\label{PSsyst1}
\left\{\begin{array}{ll}\dot{X}=X[(m-1)Y-2X],\\
\dot{Y}=-Y^2-\frac{\beta}{\alpha}Y+X-XY-Z,\\
\dot{Z}=Z[(m+p-2)Y+(\sigma-2)X].\end{array}\right.
\end{equation}
Let us remark at this point that the system \eqref{PSsyst1} differs from the phase space system analyzed in both \cite{IS19a, IS19b}. Indeed, the variable $Z(\eta)=\xi^{\sigma}f^{p-1}(\xi)$ used in the above quoted papers is no longer useful for $p<1$ as it sends the interface behaviors to infinity. We have to work instead with the new system \eqref{PSsyst1} which is very well adapted for the case $p<1$ and $m+p>2$. However, it has a further technical difficulty stemming from the fact that sometimes the coefficient $\sigma-2$ in the third equation might be negative. Notice for now that the planes $\{X=0\}$ and $\{Z=0\}$ are invariant for the system and that $X\geq0$, $Z\geq0$, only $Y$ being allowed to change sign. We easily find that for $m+p>2$ there are three critical points in the finite plane:
$$
P_0=(0,0,0), \ P_1=\left(0,-\frac{\beta}{\alpha},0\right), \ {\rm and} \ P_2=\left(\frac{m-1}{2(m+1)\alpha},\frac{1}{(m+1)\alpha},0\right).
$$
We devote the present section to the local analysis near the strongly non-hyperbolic point $P_0$, which is rather technical and quite complex. The local analysis near the points $P_1$, $P_2$ and of the critical points at infinity is left for the next section.

\medskip

\noindent \textbf{Local analysis of the point $P_0$.} The linearization of the system \eqref{PSsyst1} near $P_0$ has the matrix
$$
M(P_0)=\left(
      \begin{array}{ccc}
        0 & 0 & 0 \\
        1 & -\frac{\beta}{\alpha} & -1 \\
        0 & 0 & 0 \\
      \end{array}
    \right)
$$
thus it has a one-dimensional stable manifold and a two-dimensional center manifold. In order to study the center manifold and the flow on it, we perform the change of variable
$$
T:=\frac{\beta}{\alpha}Y-X+Z
$$
and after straightforward (but rather tedious) calculations, the system in variables $(X,T,Z)$ becomes
\begin{equation}\label{interm1}
\left\{\begin{array}{ll}\dot{X}&=\frac{1}{\beta}X[X+(m-1)\alpha T-(m-1)\alpha Z],\\
\dot{T}&=-\frac{\beta}{\alpha}T-\frac{\alpha}{\beta}T^2-\frac{\alpha(m+1)+\beta}{\beta}XT-\frac{\alpha(m+p)}{\beta}TZ\\
&-\frac{m\alpha-\beta}{\beta}X^2+\frac{3\beta+2\alpha+3}{\beta}XZ-\frac{\alpha(m+p-1)}{\beta}Z^2,\\
\dot{Z}&=\frac{1}{\beta}Z[2X+(m+p-2)\alpha T-(m+p-2)\alpha Z].\end{array}\right.
\end{equation}
We can now apply the local center manifold theorem \cite[Theorem 1, Section 2.12]{Pe} to find (rather easily, by taking off the third order terms in the equation of $T$ in the system \eqref{interm1}) that the center manifold has the equation
$$
T(X,Z)=\frac{\alpha}{\beta}\left[-\frac{m\alpha-\beta}{\beta}X^2+\frac{3\beta+2\alpha+3}{\beta}XZ-\frac{\alpha(m+p-1)}{\beta}Z^2\right]+O(|(X,Z)|^3)
$$
and the flow on the center manifold is given by the almost homogeneous quadratic system
\begin{equation}\label{interm2}
\left\{\begin{array}{ll}\dot{X}&=\frac{1}{\beta}X[X-(m-1)\alpha Z]+O(|(X,Z)|^3),\\
\dot{Z}&=\frac{1}{\beta}Z[2X-(m+p-2)\alpha Z]+O(|(X,Z)|^3),\end{array}\right.
\end{equation}
which is easily obtained by keeping only the quadratic terms in the equations fulfilled by $X$, $Z$ in the system \eqref{interm1}. In order to study the flow given by this system in a neighborhood of the point $(X,Z)=(0,0)$ we need to use the rather complicated but complete classification of the (2,2)-homogeneous dynamical systems established in the renowned paper by Date \cite{Date79}. This study will lead us directly to the proof of Theorem \ref{th.type2}.

\medskip

\noindent \begin{proof}[Proof of Theorem \ref{th.type2}] As explained above, the proof consists esentially in the study of the homogeneous quadratic part of the system \eqref{interm2} above, using the theory from \cite{Date79}. Putting aside the common factor $1/\beta$ which can be absorbed by a change in the indepedent variable, we have to study the quadratic system
\begin{equation}\label{interm3}
\left\{\begin{array}{ll}\dot{X_1}&=X_1^2-(m-1)\alpha X_1X_2,\\
\dot{X_2}&=2X_1X_2-(m+p-2)\alpha X_2^2,\end{array}\right.
\end{equation}
We will use from now on the same notation as in \cite{Date79}, where the reader can find more details on the classification that follows below. The idea is to compute several important invariants associated to the system \eqref{interm2} and study their sign in order to get the phase portrait near the origin. The tensor of the coefficients of the system \eqref{interm2} (written in homogeneous form) is given by
\begin{equation*}
\begin{split}
&P_{11}^1=1, \quad P_{12}^1=P_{21}^1=-\frac{(m-1)\alpha}{2}, \quad P_{22}^1=0,\\
&P_{11}^2=0, \quad P_{12}^2=P_{21}^2=1, \quad P_{22}^2=-(m+p-2)\alpha,
\end{split}
\end{equation*}
and according to \cite{Date79} we decompose this tensor $P^{k}_{\lambda,\mu}$ into its vector part $p_{\lambda}$ and its tensor part $Q^{k}_{\lambda,\mu}$ using the formulae
\begin{equation}\label{interm4}
p_{\lambda}=\sum_{k=1}^{2}P_{\lambda,k}^{k}, \quad Q^{k}_{\lambda,\mu}=P^{k}_{\lambda,\mu}-\frac{1}{3}(\delta_{\lambda,k}p_{\mu}-\delta_{\mu,k}p_{\lambda}),
\end{equation}
where $\delta_{i,j}=1$ if $i=j$ and zero otherwise. In our case, it is easy to check that the vector part contains
$$
p_1=P_{11}^1+P_{12}^2=2, \quad p_2=P_{21}^1+P_{22}^2=-\frac{\alpha}{2}(3m+2p-5)
$$
and the tensor part is given by
\begin{equation*}
\begin{split}
&Q_{11}^1=P_{11}^1-\frac{2}{3}p_1=1-\frac{4}{3}=-\frac{1}{3},\\
&Q_{12}^1=Q_{21}^1=P_{12}^1-\frac{1}{3}p_2=-\frac{\alpha}{3}(1-p),\\
&Q_{22}^1=P_{22}^1=0, \ Q_{11}^2=P_{11}^2=0, \ Q_{12}^2=Q_{21}^2=P_{12}^2-\frac{1}{3}p_1=\frac{1}{3},\\
&Q_{22}^2=P_{22}^2-\frac{2}{3}p_2=\frac{\alpha}{3}(1-p).
\end{split}
\end{equation*}
With the help of these values, we further compute the Hessian of the fundamental cubic form associated to the system with the general formulae
$$
h^{k,l}=\frac{1}{2}\sum_{\mu,\nu,\rho,\sigma=1}^2\epsilon^{\mu,\nu}\epsilon^{\rho,\sigma}Q_{\mu,\rho}^kQ_{\nu,\sigma}^{l},
$$
where $\epsilon^{11}=\epsilon^{22}=0$ and $\epsilon^{12}=-\epsilon^{21}=-1$. After some easy calculations, we obtain that in our case the Hessian writes
\begin{equation*}
h^{11}=-\frac{\alpha^2}{9}(1-p)^2, \quad h^{12}=h^{21}=\frac{\alpha}{18}(1-p) \quad h^{22}=-\frac{1}{9}.
\end{equation*}
With all these numbers, we are now ready to compute the fundamental scalar invariants of degree 2 called $D$, $H$ and $F$ in \cite{Date79}, which are the basis of the classification of all the phase portraits. We thus have
$$
H=h^{11}p_1^2+2h^{12}p_1p_2+h^{22}p_2^2=-\frac{\alpha^2}{9}\left[3(p-1)^2+\frac{9}{4}(m-1)^2\right],
$$
$$
D=-2\sum_{k,l,\mu,\nu=1}^2\epsilon_{k,l}\epsilon_{\mu,\nu}h^{k,\mu}h^{l,\nu}=-\frac{1}{27}\alpha^2(1-p)^2,
$$
where in the last formula we denoted $\epsilon_{11}=\epsilon_{22}=0$, $\epsilon_{12}=-\epsilon_{21}=-1$, and
$$
F=\sum_{\mu,k,\rho,l=1}^2\epsilon^{\mu,k}\epsilon^{\rho,l}\left(\sum_{\sigma=1}^2Q_{kl}^{\sigma}p_{\mu}p_{\rho}p_{\sigma}\right)=-\frac{\alpha^2}{2}(3m+2p-5)(3m-2p-1).
$$
With these scalar invariants we can finally introduce the general set of invariants introduced by Date and Iri in \cite{DI76} having the general expression
$$
K_m=F+9(-2)^{m-3}H-27(-8)^{m-3}D, \quad m=1,2,...
$$
In particular, it is easy to calculate $K_2$ and $K_3$, more precisely
$$
K_2=F-\frac{9}{2}H+\frac{27}{8}D=-\frac{27}{8}\alpha^2(m+p-2)(m-p)<0,
$$
and
$$
K_3=F+9H-27D=-\frac{27}{4}\alpha^2(m-1)^2<0.
$$
According to the general classification in \cite[p. 327]{Date79} we find that we are in the case $D<0$, $K_2<0$ and $K_3<0$, which corresponds to the phase portrait no. 8 in \cite[Figure 8, p. 329]{Date79}. We thus infer that the local behavior near the origin in the system \eqref{interm2} presents an elliptic sector in a sufficiently small neighborhood of the origin, as shown in Figure \ref{fig1}, hence there exist orbits in the phase space which go out and then enter the critical point $P_0$ along the center manifold.

\begin{figure}[ht!]
  % Requires \usepackage{graphicx}
  \begin{center}
  \includegraphics[width=10cm,height=7cm]{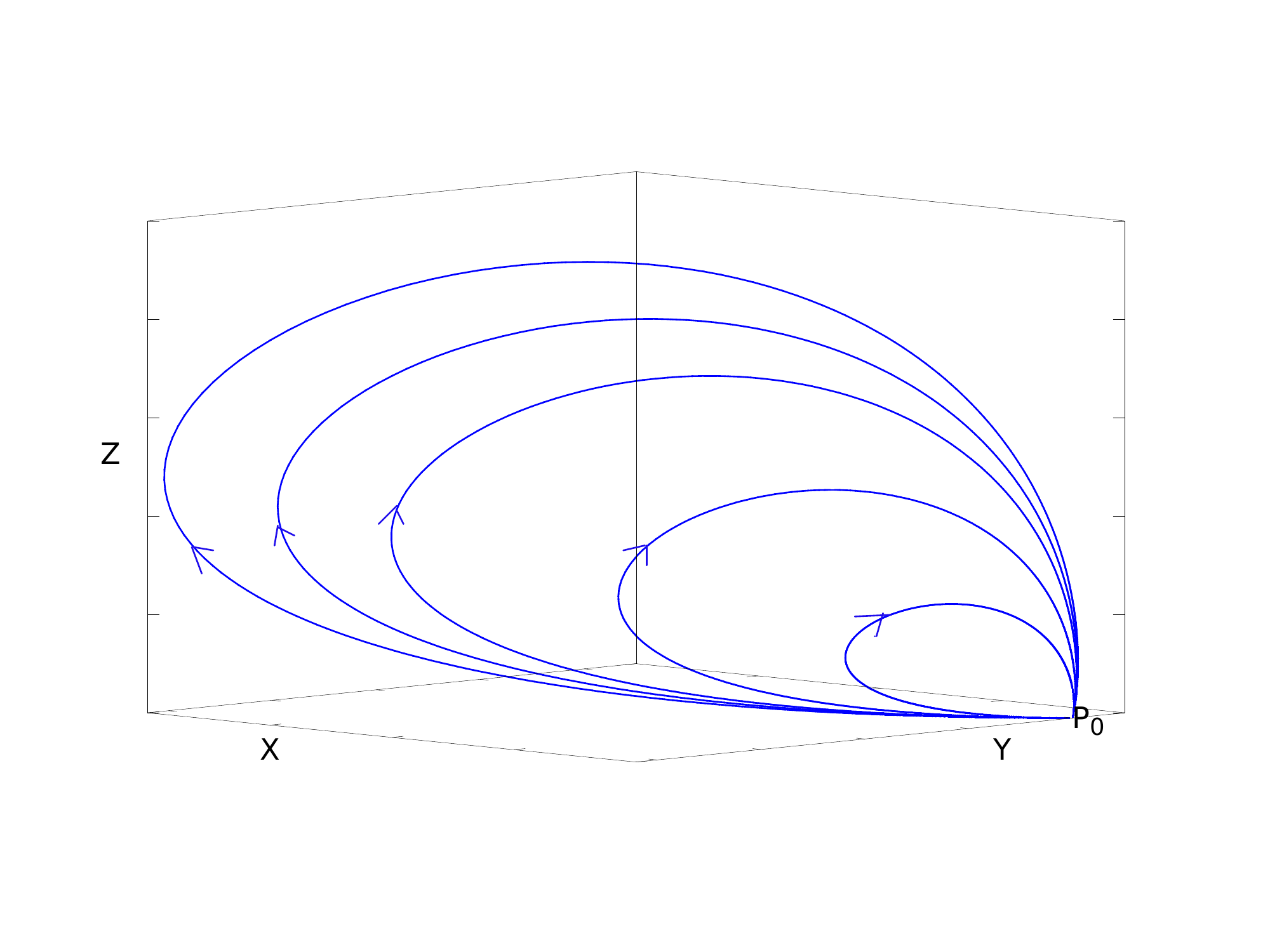}
  \end{center}
  \caption{Local behavior of the system \eqref{PSsyst1} with the elliptic sector near the origin. Numerical experiment for $m=3$, $p=0.5$ and $\sigma=1$}\label{fig1}
\end{figure}

Coming back to variables $(X,T,Z)$, the profiles contained in these connection have $T\sim0$ or equivalently, undoing the change of variable,
$$
\frac{\beta}{\alpha}Y-X+Z\sim0
$$
which in terms of profiles becomes
\begin{equation}\label{interm5}
f'(\xi)-\frac{\alpha}{\beta}\xi^{-1}f(\xi)+\frac{1}{\beta}\xi^{\sigma-1}f^{p}(\xi)\sim0.
\end{equation}
We discard at this point the possibility that the limit behavior in \eqref{interm5} is taken as $\xi\to\infty$. Indeed, assume for contradiction that \eqref{interm5} holds true as $\xi\to\infty$. Since $X(\xi)\to0$, it follows that $f(\xi)<K\xi^{2/(m-1)}$ for any $K>0$ (for $\xi$ large enough), whence
$$
\frac{\xi^{\sigma-1}f(\xi)^p}{\xi^{-1}f(\xi)}=\xi^{\sigma}f^{p-1}(\xi)>K\xi^{\frac{\sigma(m-1)+2(p-1)}{m-1}}\to\infty, \ {\rm as} \ \xi\to\infty.
$$
On the other hand, we can just integrate the quadratic part of the system \eqref{interm2}. More precisely, letting
$$
\frac{dZ}{dX}=\frac{Z(2X-(m+p-2)\alpha Z)}{X(X-(m-1)\alpha Z)},
$$
we obtain a homogeneous differential equation that can be explicitly integrated to find the general solution
$$
X=\frac{KW}{(1+(1-p)\alpha W)^{(m-p)/(1-p)}}, \quad W=\frac{Z}{X}, \ K\in\real,
$$
or equivalently in terms of profiles
$$
\xi^{-2}f^{m-1}(\xi)\sim\frac{K\xi^{\sigma}f^{p-1}(\xi)}{(1+(1-p)\alpha\xi^{\sigma}f(\xi)^{p-1})^{(m-p)/(1-p)}}, \quad {\rm as} \ \xi\to\infty
$$
which can be written also as
\begin{equation}\label{interm6}
(1+(1-p)\alpha\xi^{\sigma}f(\xi)^{p-1})^{(m-p)/(1-p)}\sim K\xi^{\sigma+2}f(\xi)^{p-m}, \quad {\rm as} \ \xi\to\infty.
\end{equation}
But we just noticed above that $\xi^{\sigma}f(\xi)^{p-1}\to\infty$ as $\xi\to\infty$, and since $(m-p)/(1-p)>1$, by keeping the dominating orders in $\xi$ in \eqref{interm6} we get
$$
C\xi^{\sigma(m-p)/(1-p)}f(\xi)^{p-m}\sim K\xi^{\sigma+2}f(\xi)^{p-m}, \quad {\rm as} \ \xi\to\infty.
$$
But the latter is equivalent after simplifications to
$$
\xi^{[\sigma(m-1)+2(1-p)]/(1-p)}\sim\frac{K}{C}, \quad {\rm as} \ \xi\to\infty,
$$
which is a contradiction since $(\sigma(m-1)+2(p-1))/(1-p)>0$. Thus, we cannot enter $P_0$ as $\xi\to\infty$ in terms of profiles. The remaining possibilities are that \eqref{interm5} holds true as either $\xi\to0$ or as $\xi\to\xi_0\in(0,\infty)$. The general solution to the equation \eqref{interm5} with exact equality to zero is
$$
f_0(\xi)=\xi^{(\sigma+2)/(m-p)}\left[C-(1-p)\xi^{\sigma}\right]^{1/(1-p)}.
$$
On the one hand the differential equation with solution $f_0$ is an approximation for the true equation \eqref{interm5}. On the other hand the orbits cannot go out from a critical point as $\xi\to\xi_0=(C/(1-p))^{1/\sigma}$. Assume for contradiction that this were the case. It then follows that $f'(\xi_0)>0$ and
$$
\frac{1}{\beta}\xi^{\sigma-1}f(\xi)^p-\frac{\alpha}{\beta}\xi^{-1}f(\xi)=\frac{1}{\beta}\xi^{-1}f(\xi)\left(\xi^{\sigma}f(\xi)^{p-1}-\alpha\right)>0
$$
in a right-neighborhood of $\xi_0$, since $p-1<0$ and $f(\xi_0)=0$. We thus get a contradiction to \eqref{interm5}. It thus follows that the profiles contained in the orbits going out of $P_0$ behave as in \eqref{beh.02} as $\xi\to0$. By similar arguments and according to the form of $f_0$, the profiles contained in orbits entering $P_0$ have an interface of Type II at some point $\xi_0=(C/(1-p))^{1/\sigma}\in(0,\infty)$. Since all these hold true for any $\sigma>2(1-p)/(m-1)$, any profile contained in such an elliptic orbit satisfies Theorem \ref{th.type2}.
\end{proof}

\section{Local analysis of the critical points}\label{sec.local}

In this section we complete the local analysis of the critical points of the system \eqref{PSsyst1} both in the finite space and at infinity. This part is rather similar to \cite[Section 2]{IS19b}. Let us recall that the critical point $P_0$ was studied in Section \ref{sec.type2}. We start with the remaining finite critical points $P_1$ and $P_2$.
\begin{lemma}[Local analysis of the point $P_1$]\label{lem.P1}
The system \eqref{PSsyst1} in a neighborhood of the critical point $P_1$ has a one-dimensional unstable manifold and a two-dimensional stable manifold. The orbits entering $P_1$ on the stable manifold contain profiles such that
\begin{equation}\label{Beh.P1}
f(\xi)\sim\left[K-\frac{\beta(m-1)}{2m}\xi^2\right]^{1/(m-1)}, \quad K>0,
\end{equation}
for $\xi\to\xi_0=\sqrt{2mK/(m-1)\beta}\in(0,\infty)$. Thus, this point gathers the Type I interface behavior.
\end{lemma}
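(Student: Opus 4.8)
The plan is to carry out a standard linearization analysis at $P_1=(0,-\beta/\alpha,0)$, followed by an identification of the profile behavior carried by the stable manifold. First I would compute the Jacobian matrix $M(P_1)$ of the system \eqref{PSsyst1} at $P_1$. Since $P_1$ lies in the invariant plane $\{X=0\}\cap\{Z=0\}$, the linearization will have a block structure: the $Y$-equation contributes the eigenvalue coming from $\partial_Y(-Y^2-\frac{\beta}{\alpha}Y+X-XY-Z)=-2Y-\frac{\beta}{\alpha}$ evaluated at $Y=-\beta/\alpha$, which gives $+\beta/\alpha>0$; the $X$-equation contributes $\partial_X\dot X=(m-1)Y-4X=-(m-1)\beta/\alpha<0$ at $P_1$; and the $Z$-equation contributes $\partial_Z\dot Z=(m+p-2)Y+(\sigma-2)X=-(m+p-2)\beta/\alpha<0$ at $P_1$, using $m+p>2$. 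Hence $M(P_1)$ is triangular (or nearly so, up to the off-diagonal entries from $X$ and $Z$ appearing in $\dot Y$, which do not affect the spectrum) with one positive eigenvalue $\beta/\alpha$ and two negative eigenvalues $-(m-1)\beta/\alpha$ and $-(m+p-2)\beta/\alpha$. This establishes the one-dimensional unstable manifold and the two-dimensional stable manifold.

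Next I would extract the profile behavior. On the stable manifold we approach $P_1$, i.e. $X,Z\to0$ and $Y\to-\beta/\alpha$. The key observation is that $Y\to-\beta/\alpha$ means, in the original variables, $\frac{m}{\alpha}\xi^{-1}f^{m-2}f'\to-\beta/\alpha$, i.e. $(f^{m-1})'=\,(m-1)f^{m-2}f'\sim-\frac{(m-1)\beta}{m}\xi$ as $\xi\to\xi_0$. Integrating, $f^{m-1}(\xi)\sim K-\frac{(m-1)\beta}{2m}\xi^2$ for some constant $K$, which is precisely \eqref{Beh.P1}, and the interface point is where the bracket vanishes, $\xi_0=\sqrt{2mK/[(m-1)\beta]}$. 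Since $f^{m-1}$ behaves linearly in $(\xi_0^2-\xi^2)$, hence linearly in $(\xi_0-\xi)$ near $\xi_0$, we get $f(\xi)\sim(\xi_0-\xi)^{1/(m-1)}$, which is exactly the Type I interface behavior. I would also need to check consistency: that along the stable manifold $X\to0$ and $Z\to0$ in a manner compatible with $f\to0$ — indeed $X=\frac{m}{\alpha}\xi^{-2}f^{m-1}\to0$ since $f^{m-1}\to0$ and $\xi\to\xi_0>0$, and $Z=\frac{m}{\alpha^2}\xi^{\sigma-2}f^{m+p-2}\to0$ since $m+p-2>0$. One should verify that the independent-variable change $d\eta/d\xi=\frac{\alpha}{m}\xi f^{1-m}$ blows up as $\xi\to\xi_0$ (because $f^{1-m}\to\infty$), so that $\xi\to\xi_0$ corresponds to $\eta\to+\infty$, matching the fact that we are \emph{entering} $P_1$ (a forward limit).

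The main obstacle I anticipate is not the linearization itself but a rigorous justification that the two-dimensional stable manifold genuinely captures the stated asymptotics for \emph{all} orbits on it, rather than just formally along the linearized eigendirections. Concretely, one must argue that the eigenvector associated with the dominant (least negative in absolute value, or appropriately dominant) eigenvalue controls the leading-order term $K>0$ of the bracket, and that the subdominant corrections do not destroy the form \eqref{Beh.P1}; this requires invoking the stable manifold theorem together with a careful ordering of the eigenvalues $-(m-1)\beta/\alpha$ and $-(m+p-2)\beta/\alpha$ (their relative size depends on the sign of $m+p-2-(m-1)=p-1<0$, so $-(m+p-2)\beta/\alpha$ is the \emph{less} negative one, i.e. $Z$ decays slower than $X$ on the stable manifold). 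The remaining arguments — identifying $\xi_0\in(0,\infty)$, confirming $K>0$ from $f>0$ on the approach, and matching with Definition \ref{def1} — are then routine. This part parallels the analogous local analysis in \cite[Section 2]{IS19b}, so I would model the write-up on that treatment.
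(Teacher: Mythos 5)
Your linearization at $P_1$ is correct and coincides with the paper's: the eigenvalues are $-\beta(m-1)/\alpha$, $\beta/\alpha$ and $-(m+p-2)\beta/\alpha$, giving the one-dimensional unstable and two-dimensional stable manifolds, and the profile behavior is obtained, as in the paper, by integrating the relation $Y\sim-\beta/\alpha$, i.e. $(f^{m-1})'(\xi)\sim-\beta(m-1)\xi/m$. However, there is a genuine gap in your treatment of \emph{where} this asymptotic relation holds. You simply write ``as $\xi\to\xi_0$'' and then ``check consistency'' by assuming $\xi\to\xi_0>0$ and $f\to0$; that check is circular, since it presupposes the conclusion. An orbit entering $P_1$ only tells you that $X\to0$, $Z\to0$, $Y\to-\beta/\alpha$ along the orbit (as $\eta\to\infty$); a priori the corresponding range of $\xi$ could be $\xi\to0$, $\xi\to\xi_0\in(0,\infty)$ or $\xi\to\infty$, and deciding between these is the main content of the lemma, because it is exactly what turns the point $P_1$ into a finite interface point rather than a decay behavior at the origin or at infinity. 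The paper devotes the bulk of its proof to excluding the two extreme cases: if $Y\to-\beta/\alpha$ as $\xi\to\infty$, then $(f^{m-1})'(\xi)/\xi\to-\beta(m-1)/m$ and L'Hospital's rule gives $X(\xi)=\frac{m}{\alpha}f^{m-1}(\xi)/\xi^{2}\to-\beta(m-1)/2\alpha\neq0$, contradicting $X\to0$ on an orbit entering $P_1$; the case $\xi\to0$ is discarded by the same L'Hospital argument, after noting that $X\to0$ forces $f^{m-1}(\xi)\to0$ so the rule applies to $f^{m-1}(\xi)/\xi^{2}$. Only after this exclusion can one integrate and obtain \eqref{Beh.P1} with $\xi_0\in(0,\infty)$ and $K>0$.

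Relatedly, the obstacle you anticipate --- controlling subdominant corrections on the stable manifold via the ordering of the two negative eigenvalues --- is not where the difficulty lies and is not needed: once $\xi\to\xi_0\in(0,\infty)$ is established, the asymptotic integration of $Y\sim-\beta/\alpha$ yields \eqref{Beh.P1} for every orbit on the stable manifold, regardless of the eigendirection along which it enters. So your plan needs the exclusion argument above added; with it, the proof follows the paper's route.
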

\begin{proof}
The linearization of the system \eqref{PSsyst1} near this critical point has the matrix:
$$M(P_1)=\left(
  \begin{array}{ccc}
    -\frac{\beta(m-1)}{\alpha} & 0 & 0 \\
    1+\frac{\beta}{\alpha} & \frac{\beta}{\alpha} & -1 \\
    0 & 0 & -\frac{(m+p-2)\beta}{\alpha} \\
  \end{array}
\right)
$$
with eigenvalues
$$
\lambda_1=-\frac{\beta(m-1)}{\alpha}<0, \qquad \lambda_2=\frac{\beta}{\alpha}>0, \qquad \lambda_3=-\frac{(m+p-2)\beta}{\alpha}<0
$$
and respective eigenvectors (not normalized)
$$
e_1=\left(\frac{m\beta}{\alpha},-\left(1+\frac{\beta}{\alpha}\right),0\right), \ e_2=(0,1,0), \ e_3=\left(0,1,\frac{(m+p-1)\beta}{\alpha}\right).
$$
We then have a two-dimensional stable manifold with orbits entering the point $P_1$ in the phase space and (as it is easy to check) a unique orbit going out of $P_1$ along the $Y$-axis. We look for the profiles contained in the orbits entering $P_1$ on the two-dimensional stable manifold. We infer from the change of variables \eqref{PSvar1} that
\begin{equation}\label{interm7}
Y(\xi)=\frac{m\xi^{-1}}{\alpha(m-1)}(f^{m-1})'(\xi)\sim-\frac{\beta}{\alpha}
\end{equation}
on the orbits entering $P_1$. We show first that \eqref{interm7} holds true as $\xi\to\xi_0\in(0,\infty)$. Indeed, assume first for contradiction that \eqref{interm7} holds true as $\xi\to\infty$. Then
$$
\lim\limits_{\xi\to\infty}\frac{(f^{m-1})'(\xi)}{\xi}=-\frac{\beta(m-1)}{m}
$$
whence by L'Hospital rule we deduce that
$$
\lim\limits_{\xi\to\infty}X(\xi)=\frac{m}{\alpha}\lim\limits_{\xi\to\infty}\frac{f^{m-1}(\xi)}{\xi^2}=-\frac{\beta(m-1)}{2\alpha}.
$$
But this is a contradiction with the fact that $X(\xi)\to0$ on an orbit entering $P_1$. Assume now for contradiction that \eqref{interm7} holds true as $\xi\to0$. A similar argument based on the L'Hospital rule leads to a similar contradiction as before, after noticing that $X(\xi)\to0$ implies $f^{m-1}(\xi)\to0$ as $\xi\to0$ and thus the L'Hospital rule can be applied for the function $f^{m-1}(\xi)/\xi^2$ giving $X(\xi)$ modulo a constant. We thus conclude that \eqref{interm7} holds true as $\xi\to\xi_0$ for some $\xi_0\in(0,\infty)$ and readily get the behavior described in \eqref{Beh.P1} by direct integration.
\end{proof}
We complete the analysis of the critical points in the plane by performing the local analysis near $P_2$.
\begin{lemma}[Local analysis of the point $P_2$]\label{lem.P2}
The system \eqref{PSsyst1} in a neighborhood of the critical point $P_2$ has a two-dimensional stable manifold and a one-dimensional unstable manifold. The stable manifold is contained in the invariant plane $\{Z=0\}$. There exists a unique orbit going out of $P_2$, containing profiles such that
\begin{equation}\label{Beh.P2}
f(0)=0, \quad f(\xi)\sim\left[\frac{m-1}{2m(m+1)}\right]^{1/(m-1)}\xi^{2/(m-1)}, \ \ {\rm as} \ \xi\to0.
\end{equation}
\end{lemma}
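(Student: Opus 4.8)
\medskip
\noindent\textbf{Proof proposal.} The plan is to mirror the treatment of $P_1$ in Lemma \ref{lem.P1}: linearize the system \eqref{PSsyst1} at $P_2$, extract from the spectrum the dimensions of the invariant manifolds and the location of the stable one, and then transfer the local dynamics on the unstable manifold into the profile variable via \eqref{PSvar1} together with the relation $d\eta/d\xi=(\alpha/m)\xi f^{1-m}(\xi)$.

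First I would compute the Jacobian $M(P_2)$. Its third row turns out to be $(0,0,\lambda_3)$ and its $(3,1)$- and $(3,2)$-entries vanish, so $M(P_2)$ is block upper triangular for the splitting $(X,Y)\oplus Z$, with
\[
\lambda_3=(m+p-2)Y_2+(\sigma-2)X_2=\frac{\sigma(m-1)+2(p-1)}{2(m+1)\alpha}=\frac{\sigma+2}{2(m+1)\alpha^2}>0,
\]
where I used the value of $\alpha$ from \eqref{SSexp}. Hence the spectrum of $M(P_2)$ is $\{\lambda_3\}$ together with the spectrum of the $2\times2$ block $A$ governing the linearization of \eqref{PSsyst1} on the invariant plane $\{Z=0\}$. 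A direct (if slightly tedious) computation gives $\mathrm{tr}\,A<0$, since all diagonal entries of $A$ are negative, and
\[
\det A=\frac{m-1}{(m+1)\alpha}\left[\frac{1+\beta}{\alpha}-\frac{m-1}{2}\right],
\]
and I would then check that, after inserting \eqref{SSexp}, the bracket is strictly positive \emph{precisely} when $\sigma(m-1)>2(1-p)$, i.e.\ exactly under hypothesis \eqref{range.exp}. Consequently both eigenvalues of $A$ have negative real part. Since $(w,0)$ is an eigenvector of $M(P_2)$ whenever $Aw=\mu w$, the stable subspace is the plane $\{Z=0\}$; as this plane is invariant for \eqref{PSsyst1}, the two-dimensional stable manifold coincides with it locally, while the unstable manifold is one-dimensional and transverse to $\{Z=0\}$ (its tangent eigenvector has a nonzero $Z$-component), which yields a unique orbit leaving $P_2$ into the physically relevant region $\{Z>0\}$.

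It then remains to identify the profiles carried by this orbit. Along it $X(\eta)\to X_2=(m-1)/[2(m+1)\alpha]$, hence by \eqref{PSvar1}
\[
\xi^{-2}f^{m-1}(\xi)\longrightarrow\frac{\alpha}{m}X_2=\frac{m-1}{2m(m+1)},
\]
and it suffices to show this limit is attained as $\xi\to0$. If it were attained as $\xi\to\infty$, then $f(\xi)\sim C\xi^{2/(m-1)}\to\infty$, whence
\[
Z(\eta)=\frac{m}{\alpha^2}\,\xi^{\sigma-2}f^{m+p-2}(\xi)\sim C'\,\xi^{[\sigma(m-1)+2(p-1)]/(m-1)}\to\infty,
\]
because the exponent is positive under \eqref{range.exp}, contradicting $Z\to0$ at $P_2$; if it were attained at some $\xi_0\in(0,\infty)$, then the previous relation forces $f(\xi_0)>0$, so $\xi f^{1-m}(\xi)$ stays bounded near $\xi_0$ and $\eta$ cannot diverge to $-\infty$ there, which is again impossible for an orbit going out of $P_2$. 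Therefore $\xi\to0$, so $f(0)=0$, and integrating $f^{m-1}(\xi)\sim\frac{m-1}{2m(m+1)}\xi^{2}$ as $\xi\to0$ yields \eqref{Beh.P2}.

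The only genuinely delicate step is the sign of $\det A$, which is where \eqref{range.exp} is used; the rest --- the block structure of $M(P_2)$, the identification of the stable manifold with $\{Z=0\}$, and the $\xi$-versus-$\eta$ dichotomy --- is routine and parallels the analysis of $P_1$.
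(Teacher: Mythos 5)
Your proposal is correct and follows essentially the same route as the paper: linearization at $P_2$, sign analysis of the spectrum (the paper works with the sum and product of the two eigenvalues of the $(X,Y)$-block, which is exactly your trace/determinant computation, with $\det A=\frac{m-1}{2(m+1)\alpha^2}>0$), stable manifold contained in the invariant plane $\{Z=0\}$, a unique orbit leaving along the $\lambda_3$-eigendirection into $\{Z>0\}$, and identification of the limit as $\xi\to0$ after excluding $\xi\to\infty$ and $\xi\to\xi_0\in(0,\infty)$. The only minor difference is how $\xi\to\infty$ is discarded: you use the divergence of the $Z$-component (exponent $[\sigma(m-1)+2(p-1)]/(m-1)>0$), whereas the paper appeals to an L'Hospital-type contradiction as in Lemma \ref{lem.P1}; both arguments are valid.
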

\begin{proof}
The linearization of the system \eqref{PSsyst1} near the critical point $P_2$ has the matrix
$$
M(P_2)=\frac{1}{2(m+1)\alpha}\left(
  \begin{array}{ccc}
    -2(m-1) & (m-1)^2 & 0 \\
    2(m+1)\alpha-2 & -2\beta(m+1)-(m+3) & -2(m+1)\alpha \\
    0 & 0 & \sigma(m-1)+2(p-1) \\
  \end{array}
\right)
$$
with eigenvalues $\lambda_1$, $\lambda_2$ and $\lambda_3$ such that
$$
\lambda_1+\lambda_2=-\frac{2(m-1)+2(m+1)\beta}{2(m+1)\alpha}<0, \ \lambda_1\lambda_2=\frac{m-1}{2(m+1)\alpha^2}>0
$$
whence $\lambda_1$, $\lambda_2<0$ and
$$
\lambda_3=\frac{\sigma(m-1)+2(p-1)}{2(m+1)\alpha}>0.
$$
It is easy to check (by computing the eigenvectors corresponding to $\lambda_1$ and $\lambda_2$ and noticing that both have the $Z$-component zero) that the two-dimensional stable manifold is contained in the invariant plane $\{Z=0\}$. Similarly as in \cite[Lemma 2.3]{IS19a} we conclude that there exists an unique orbit going out of $P_2$ towards the interior of the phase space, tangent to the eigenvector corresponding to $\lambda_3$ which is
$$
e_3=\left(\frac{2(m-1)^2(m+1)\alpha}{D},\frac{2(m+1)[(m-1)(\sigma+2)+2(p-1)]}{D},1\right),
$$
where
$$
D=-(m-1)^2\sigma^2-(m-1)(3m+4p-3)\sigma-4m^2-4mp-4p^2+4m+8p<0.
$$
The local behavior \eqref{Beh.P2} of the profiles contained in the orbit going out of $P_2$ is obtained from the fact that
\begin{equation}\label{interm8}
X(\xi)=\frac{m}{\alpha}\frac{f^{m-1}(\xi)}{\xi^2}\sim\frac{m-1}{2(m+1)\alpha},
\end{equation}
which is obvious that cannot hold true as $\xi\to\xi_0\in(0,\infty)$ (in such a case $f(\xi)$ would start from a positive constant) and again a contradiction based on the L'Hospital rule similar to the ones in the proof of Lemma \ref{lem.P1} discards the possibility that $\xi\to\infty$. Thus \eqref{interm8} holds true necessarily as $\xi\to0$ and this is equivalent to the claimed local behavior \eqref{Beh.P2}.
\end{proof}

\medskip

\noindent \textbf{Local analysis of the critical points at infinity}. Together with the finite critical points already analyzed, in order to understand the global picture of the phase space associated to the system \eqref{PSsyst1}, we need to analyze its critical points at the space infinity. To this end, we pass to the Poincar\'e hypersphere according to the theory in \cite[Section 3.10]{Pe}. We thus introduce the new variables $(\overline{X},\overline{Y},\overline{Z},W)$ by
$$
X=\frac{\overline{X}}{W}, \ Y=\frac{\overline{Y}}{W}, \ Z=\frac{\overline{Z}}{W}
$$
and we derive from \cite[Theorem 4, Section 3.10]{Pe} that the critical points at space infinity lie on the equator of the Poincar\'e hypersphere, hence at points
$(\overline{X},\overline{Y},\overline{Z},0)$ where $\overline{X}^2+\overline{Y}^2+\overline{Z}^2=1$ and the following system is fulfilled:
\begin{equation}\label{Poincare1}
\left\{\begin{array}{ll}\overline{X}Q_2(\overline{X},\overline{Y},\overline{Z})-\overline{Y}P_2(\overline{X},\overline{Y},\overline{Z})=0,\\
\overline{X}R_2(\overline{X},\overline{Y},\overline{Z})-\overline{Z}P_2(\overline{X},\overline{Y},\overline{Z})=0,\\
\overline{Y}R_2(\overline{X},\overline{Y},\overline{Z})-\overline{Z}Q_2(\overline{X},\overline{Y},\overline{Z})=0,\end{array}\right.
\end{equation}
where $P_2$, $Q_2$ and $R_2$ are the homogeneous second degree parts of the terms in the right hand side of the system \eqref{PSsyst1}, that is
\begin{equation*}
\begin{split}
&P_2(\overline{X},\overline{Y},\overline{Z})=\overline{X}[(m-1)\overline{Y}-2\overline{X}],\\
&Q_2(\overline{X},\overline{Y},\overline{Z})=-\overline{Y}^2-\overline{X}\overline{Y},\\
&R_2(\overline{X},\overline{Y},\overline{Z})=\overline{Z}[(m+p-2)\overline{Y}+(\sigma-2)\overline{X}].
\end{split}
\end{equation*}
We thus find that the system \eqref{Poincare1} becomes
\begin{equation}\label{Poincare2}
\left\{\begin{array}{ll}\overline{X}\overline{Y}(\overline{X}-m\overline{Y})=0,\\
\overline{X}\overline{Z}[\sigma\overline{X}-(1-p)\overline{Y}]=0,\\
\overline{Z}\overline{Y}[(m+p-1)\overline{Y}+(\sigma-1)\overline{X}]=0,\end{array}\right.
\end{equation}
Taking into account that we are considering only points with coordinates $\overline{X}\geq0$ and $\overline{Z}\geq0$, we find the following critical points at infinity (on the Poincar\'e hypersphere):
$$
Q_1=(1,0,0,0), \ \ Q_{2,3}=(0,\pm1,0,0), \ \ Q_4=(0,0,1,0), \ \
Q_5=\left(\frac{m}{\sqrt{1+m^2}},\frac{1}{\sqrt{1+m^2}},0,0\right)
$$
which are the same ones as for the phase space systems in \cite{IS19a, IS19b}. We perform next the local analysis near each one of them. This analysis follows closely the one in \cite{IS19a}, thus we will sometimes skip some details.
\begin{lemma}[Local analysis of the point $Q_1$]\label{lem.Q1}
The critical point $Q_1=(1,0,0,0)$ in the Poincar\'e sphere is an unstable node. The orbits going out of this point into the finite part of the phase space contain profiles $f$ such that $f(0)=a>0$ and any possible value of $f'(0)$.
\end{lemma}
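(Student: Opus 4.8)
The plan is to analyse $Q_1$ in the standard projective chart of the Poincar\'e hypersphere adapted to the positive $\overline{X}$–direction. Since $\overline X\neq0$ near $Q_1$, I would introduce the local coordinates
\[
w=\frac1X=\frac{W}{\overline X},\qquad u=\frac YX=\frac{\overline Y}{\overline X},\qquad v=\frac ZX=\frac{\overline Z}{\overline X},
\]
so that $Q_1$ becomes the origin $(w,u,v)=(0,0,0)$ and the physically meaningful region corresponds to $w>0$, $v\geq0$ (while $u$ may have any sign). Differentiating in \eqref{PSsyst1} and rescaling the independent variable by $ds=d\eta/w$ — an orientation–preserving change since $w>0$ on the relevant region — one obtains after a direct (if slightly tedious) computation a polynomial system of the form
\[
\begin{aligned}
w'&=w\bigl(2-(m-1)u\bigr),\\
u'&=u(1-mu)+w\Bigl(1-\tfrac{\beta}{\alpha}u-v\Bigr),\\
v'&=v\bigl((p-1)u+\sigma\bigr),
\end{aligned}
\]
with $'=d/ds$, which is the analogue of the computation carried out for the corresponding point in \cite[Section 2]{IS19a}.

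The next step is to linearize this system at the origin: its matrix is lower triangular with diagonal entries $2$, $1$ and $\sigma$. Since $\sigma>2(1-p)/(m-1)>0$, all three eigenvalues are real and strictly positive, and a quick check shows that the eigenspaces have full dimension even in the degenerate cases $\sigma\in\{1,2\}$; hence $Q_1$ is an unstable node, and a whole neighborhood of $Q_1$ inside $\{w>0\}$ is foliated by orbits leaving $Q_1$ as the rescaled time $s\to-\infty$.

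It remains to recognize the profiles carried by these orbits. Because $0<p<1$ and we look for profiles with $f(0)=a>0$, Eq. \eqref{SSODE} is a regular ODE near $\xi=0$ (the leading term is $mf^{m-1}f''$, invertible since $f(0)>0$, and $\xi^{\sigma}f^{p}$ is continuous because $\sigma>0$), so for each $a>0$ and each $b\in\real$ there is a unique local solution with $f(0)=a$, $f'(0)=b$. Inserting this solution into \eqref{PSvar1} and letting $\xi\to0^+$ yields
\[
w(\xi)\sim\frac{\alpha}{m}a^{1-m}\xi^{2},\qquad u(\xi)\sim\frac{b}{a}\,\xi,\qquad v(\xi)\sim\frac{1}{\alpha}a^{p-1}\xi^{\sigma},
\]
all tending to $0$; thus the corresponding orbit converges to $Q_1$, and since $d\eta/d\xi=\tfrac{\alpha}{m}\xi f^{1-m}>0$ while $w\to0$, this happens as $s\to-\infty$, i.e. the orbit goes out of the unstable node $Q_1$. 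Conversely, the orbits leaving the node form a two–parameter family, matched by the two–parameter family of data $(a,b)=(f(0),f'(0))$ with $a>0$; hence the orbits emanating from $Q_1$ are exactly those whose profiles satisfy $f(0)=a>0$ with any prescribed value of $f'(0)$.

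The main obstacle I anticipate is purely the bookkeeping: carrying out the chart change and the time rescaling without sign errors, so that ``leaving $Q_1$'' is correctly identified with ``$\xi\to0^+$'', and phrasing the converse correspondence cleanly (ruling out, or rendering irrelevant for the statement, that an orbit out of $Q_1$ could instead correspond to $f$ blowing up at a finite $\xi_0>0$). As the text already notes, all of this runs parallel to the analysis of the analogous point in \cite{IS19a}, so no new ideas are needed.
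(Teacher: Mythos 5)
Your chart and linearization agree with the paper's proof: the coordinates $(w,u,v)=(1/X,\,Y/X,\,Z/X)$ together with the time rescaling are precisely what \cite[Theorem 5 (a), Section 3.10]{Pe} produces (your polynomial system is the paper's \eqref{systinf1} up to the overall sign fixed by the direction of the flow, which the paper determines from $\dot X\approx -2X^2<0$ near $Q_1$), and the eigenvalues $2,1,\sigma$ yielding an unstable node are the same. Where you genuinely diverge is in identifying the profiles. The paper works from the orbit side: near the node it integrates $dz/dw\sim(\sigma/2)\,z/w$ to get $z\sim Cw^{\sigma/2}$, i.e. $Z/X\sim CX^{-\sigma/2}$, which in profile variables forces $f^{\,p-1+\sigma(m-1)/2}\to \mathrm{const}$, hence $f(\xi)\to a>0$ (the exponent is positive exactly because $\sigma>2(1-p)/(m-1)$), and then $\xi\to0$ because $X\to\infty$; no constraint on $f'(0)$ appears. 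You work from the ODE side: local well-posedness of \eqref{SSODE} at $\xi=0$ for data $f(0)=a>0$, $f'(0)=b$, plus substitution into \eqref{PSvar1} showing $(w,u,v)\to(0,0,0)$ as $\xi\to0^+$ in backward rescaled time. That half of your argument is correct and is a clean way to see that every pair $(a,b)$ with $a>0$, $b\in\real$ is realized by an orbit leaving $Q_1$.

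The gap is the converse, which you delegate to the sentence that the orbits leaving the node form a two-parameter family ``matched'' by the two-parameter family of data $(a,b)$. A dimension count does not show that the profile-generated orbits exhaust the orbits leaving $Q_1$ into $\{w>0,\ v>0\}$: a two-parameter subfamily of a two-parameter family may well be a proper subset, so as written you have not excluded that some orbit out of $Q_1$ carries a profile with a different behavior (for instance $f\to\infty$, or a limit attained as $\xi\to\xi_0\in(0,\infty)$) --- exactly the possibility you yourself flag as the main obstacle but do not resolve. Since the lemma is a statement about what the orbits emanating from $Q_1$ contain, and it is this direction that is used later in the classification arguments, the converse is the substantive part. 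The paper closes it with the asymptotic relation $z\sim Cw^{\sigma/2}$ described above; alternatively, you could note that along any orbit approaching the node backward in time one has $w'=w\bigl(2+o(1)\bigr)$ and $v'=v\bigl(\sigma+o(1)\bigr)$, so $\log v/\log w\to\sigma/2$, and then run the same computation in the variables \eqref{PSvar1} to conclude $f\to a>0$ and $\xi\to0$. Adding such a step would make your proof complete; the remainder matches the paper's argument.
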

\begin{proof}
We apply part (a) of \cite[Theorem 5, Section 3.10]{Pe} to infer that the flow in a neighborhood of $Q_1$ is topologically equivalent to the flow in a neighborhood of the origin $(y,z,w)=(0,0,0)$ for the system
\begin{equation}\label{systinf1}
\left\{\begin{array}{ll}-\dot{y}=-y-w+my^2+\frac{\beta}{\alpha}yw+zw,\\
-\dot{z}=-\sigma z+(1-p)yz,\\
-\dot{w}=-2w+(m-1)yw,\end{array}\right.
\end{equation}
where the minus sign has been chosen in the system \eqref{systinf1} in order to match the direction of the flow. We deduce it from the first equation of the original system \eqref{PSsyst1},
$$
\dot{X}=X[(m-1)Y-2X],
$$
which gives $\dot{X}<0$ in a neighborhood of $Q_1$, taking into account that $|X/Y|\to+\infty$ near this point. Thus $Q_1$ is an unstable node since the linearization of the system \eqref{systinf1} near the origin has eigenvalues 1, 2 and $\sigma$. The local behavior of the profiles contained in the orbits going out of $Q_1$ is given by
\begin{equation*}
\frac{dz}{dw}\sim\frac{\sigma}{2}\frac{z}{w},
\end{equation*}
whence by integration $z\sim Cw^{\sigma/2}$. Coming back to the original variables and recalling that the projection of the Poincar\'e hypersphere has been done by dividing by the $X$ variable, we infer that $Z/X\sim CX^{-\sigma/2}$, $C>0$, thus
$$
\frac{m}{\alpha^2}\xi^{\sigma-2}f(\xi)^{m+p-2}\sim C\left(\frac{m}{\alpha}\right)^{(2-\sigma)/2}\xi^{\sigma-2}f(\xi)^{(m-1)(2-\sigma)/2}
$$
which leads easily to $f(\xi)\sim a$ for some $a>0$. Moreover, the latter holds true as $\xi\to0$, since at $Q_1$ we have $X\to\infty$. We thus get $f(0)=a>0$ with no further condition on the derivative $f'(0)$.
\end{proof}
\begin{lemma}[Local analysis of the points $Q_2$ and $Q_3$]\label{lem.Q23}
The critical points $Q_{2,3}=(0,\pm1,0,0)$ in the Poincar\'e hypersphere are an unstable node, respectively a stable node. The orbits going out of $Q_2$ to the finite part of the phase space contain profiles $f(\xi)$ such that there exists $\xi_0\in(0,\infty)$ with $f(\xi_0)=0$, $(f^m)'(\xi_0)>0$. The orbits entering the point $Q_3$ and coming from the finite part of the phase space contain profiles $f(\xi)$ such that there exists $\xi_0\in(0,\infty)$ with $f(\xi_0)=0$, $(f^m)'(\xi_0)<0$.
\end{lemma}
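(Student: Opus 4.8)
The plan is to proceed exactly as for $Q_1$ in Lemma~\ref{lem.Q1}, only now in the Poincar\'e chart adapted to the $\overline{Y}$-direction, i.e.\ the (hypersphere) analogue of part (b) of \cite[Theorem~5, Section~3.10]{Pe}. Concretely, I would introduce $u=X/Y$, $v=Z/Y$, $z=1/Y$ (projecting the equator by dividing \eqref{PSsyst1} through by $Y$) and rewrite the system in these variables, rescaling the independent variable by a factor $z$ to remove the pole; a direct computation yields a system of the form
\begin{equation*}
\left\{\begin{array}{ll}u'&=u(m-u)+z\,q_1(u,v,z),\\
v'&=(m+p-1)v+(\sigma-1)uv+z\,q_2(u,v,z),\\
z'&=z(1+u)+z^2q_3(u,v,z),\end{array}\right.
\end{equation*}
with $q_1,q_2,q_3$ polynomials, whose linearization at the origin is the diagonal matrix $\mathrm{diag}(m,\,m+p-1,\,1)$. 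Since $m>1$ and $0<p<1$, all three eigenvalues are strictly positive, so the origin is an unstable node for the rescaled flow; translating this back onto the Poincar\'e hypersphere, and using that the vector field \eqref{PSsyst1} has even degree --- equivalently, that the rescaling factor $z=1/Y$ has opposite signs at the two antipodal points --- one obtains that $Q_2$ is an unstable node while $Q_3$ is a stable node. This step should be routine, as in \cite[Section~2]{IS19b}.

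Next I would identify the profiles carried by the orbits attached to $Q_2$ and $Q_3$. On such an orbit one has $X\to0$, $Z\to0$ and $Y\to\pm\infty$, and since by \eqref{PSvar1}
$$
Y(\xi)=\frac{m}{\alpha(m-1)}\xi^{-1}(f^{m-1})'(\xi),\qquad X(\xi)=\frac{m}{\alpha}\xi^{-2}f^{m-1}(\xi),
$$
the condition $Y\to\pm\infty$ says $(f^{m-1})'(\xi)/\xi\to\pm\infty$. Exactly as in the proofs of Lemmas~\ref{lem.P1}--\ref{lem.P2}, I would discard the limits $\xi\to\infty$ and $\xi\to0$: if, say, $(f^{m-1})'(\xi)/\xi\to+\infty$ as $\xi\to\infty$ (or as $\xi\to0$), then by integration $f^{m-1}(\xi)$ would eventually exceed any fixed multiple of $\xi^2$, forcing $X(\xi)\not\to0$, a contradiction. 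Hence the orbit meets $\{X=0\}$ at a finite point $\xi_0\in(0,\infty)$, where $f(\xi_0)=0$ by continuity. For the sign of the flux I would use the elementary identity $(f^m)'(\xi)=\alpha\xi\,Y(\xi)f(\xi)$, which follows straight from \eqref{PSvar1}, together with the fact (coming from the definition of the new independent variable, so that $d\tau/d\xi=f'/f$ along the orbit) that near the node $f$ and $Y$ are comparable to $e^{\tau}$, respectively $e^{-\tau}$: thus $Yf$ converges to a finite \emph{nonzero} constant, of sign $+$ near $Q_2$ (where $Y>0$, $f>0$ on a right-neighbourhood of $\xi_0$) and of sign $-$ near $Q_3$ (where $Y<0$, $f>0$ on a left-neighbourhood of $\xi_0$). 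This gives $(f^m)'(\xi_0)>0$ for the orbits going out of $Q_2$ and $(f^m)'(\xi_0)<0$ for the orbits entering $Q_3$; a quick leading-order balance in \eqref{SSODE} then confirms $f(\xi)\sim c|\xi-\xi_0|^{1/m}$, so that $\xi_0$ is neither a Type~I nor a Type~II interface.

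The point requiring the most care is the bookkeeping of the direction of the flow when passing from the $\tau$-rescaled chart back to the variable $\eta$ and then to $\xi$: the rescaling factor $z=1/Y$ is positive near $Q_2$ but negative near $Q_3$, and this is precisely what turns $Q_2$ into an $\alpha$-limit (``orbits going out'') and $Q_3$ into an $\omega$-limit (``orbits entering''), and what produces the sign flip in $(f^m)'(\xi_0)$ between the two points; one must also check that $f>0$ on the correct side of $\xi_0$ in each case. Everything else is a repetition of the contradiction-via-L'H\^opital arguments already used several times in Section~\ref{sec.local}.
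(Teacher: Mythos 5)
Your proposal is correct and follows essentially the same route as the paper: the same Poincar\'e chart obtained by dividing by $Y$ (part (b) of \cite[Theorem 5, Section 3.10]{Pe}), the same linearization with positive eigenvalues $m$, $m+p-1$, $1$, and the same time-direction bookkeeping (the paper reads it off from $\dot{Y}\sim-Y^2<0$, you from the sign flip of the rescaling factor $1/Y$ at the two antipodal points), yielding $Q_2$ unstable node and $Q_3$ stable node. The only deviation is in how the profile behavior is extracted: where the paper integrates $dx/dw\sim m\,x/w$ to get $(f^m)'(\xi)\sim C\xi^{(m+1)/(m-1)}$ and defers the finite sign-change point to \cite{IS19a}, you reach the same conclusion via the identity $(f^m)'=\alpha\xi Yf$, the node asymptotics $f\sim Ce^{\tau}$, $1/Y\sim ce^{\tau}$, and the L'H\^opital-type exclusion of $\xi\to0$ and $\xi\to\infty$ (your one-line integration argument only covers $\xi\to\infty$, but the L'H\^opital variant you invoke handles $\xi\to0$), which is a legitimate and slightly more self-contained version of the same local analysis.
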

\begin{proof}
Part (b) of \cite[Theorem 5, Section 3.10]{Pe} gives that the flow of the system \eqref{PSsyst1} near the points $Q_2$ and $Q_3$ is topologically equivalent to the flow near the origin $(x,z,w)=(0,0,0)$ of the system
\begin{equation}\label{systinf2}
\left\{\begin{array}{ll}\pm\dot{x}=-mx+x^2-\frac{\beta}{\alpha}xw+x^2w^2-xzw,\\
\pm\dot{z}=-(m+p-1)z-\frac{\beta}{\alpha}zw-(\sigma-1)xz-z^2w+xzw,\\
\pm\dot{w}=-w-\frac{\beta}{\alpha}w^2+xw^2-xw-zw^2,\end{array}\right.
\end{equation}
where the minus sign works for one of the points and the plus sign for the other point. From the second equation of the original system \eqref{PSsyst1} we infer that
$$
\dot{Y}=-Y^2-\frac{\beta}{\alpha}Y+X(1-Y)-Z\sim-Y^2<0
$$
in a neighborhood of both points $Q_2$ and $Q_3$ (since $Y\to\pm\infty$ and dominates over the other variables in a neighborhood of these points), which gives the direction of the flow from right to left and proves that the minus sign in the system \eqref{systinf2} corresponds to $Q_2$ and the plus sign to $Q_3$. Thus $Q_2$ is an unstable node and $Q_3$ is a stable node. In order to establish the local behavior, we notice that in a neighborhood of the origin of the system \eqref{systinf2} we have
$$
\frac{dx}{dw}\sim m\frac{x}{w},
$$
whence by integration $x\sim Cw^m$ or in terms of initial variables $X\sim CY^{1-m}$. Using the formulas for $X$, $Y$ in \eqref{PSvar1} we obtain
$$
(f^m)'(\xi)\sim C\xi^{(m+1)/(m-1)},
$$
and the desired sign-changing behavior at some finite point $\xi=\xi_0\in(0,\infty)$ following a very similar discussion as in \cite[Lemma 2.7]{IS19a} or \cite[Lemma 2.4]{IS20}. We omit the details.
\end{proof}
We next analyze first the critical point $Q_5$ and let $Q_4$ for the end. This is motivated by the fact that the local analysis near $Q_5$ follows the same techniques as used in the previous Lemmas.
\begin{lemma}\label{lem.Q5}
The critical point $Q_5$ in the Poincar\'e hypersphere has a two-dimensional unstable manifold and a one-dimensional stable manifold. The orbits going out from this point into the finite region of the phase space contain profiles satisfying
$$
f(0)=0, \ \quad f(\xi)\sim K\xi^{1/m} \ {\rm as} \  \xi\to0, \ K>0,
$$
in a right-neighborhood of $\xi=0$.
\end{lemma}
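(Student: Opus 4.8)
The plan is to reduce the study of $Q_5$ to a local analysis at a finite point, using the same chart at infinity already employed for $Q_1$. Since $\overline{X}\neq0$ at $Q_5$, we work in the variables $(y,z,w)=(Y/X,Z/X,1/X)$, in which the system reads as \eqref{systinf1} and $Q_5$ corresponds to the point $(y,z,w)=(1/m,0,0)$, because at $Q_5$ we have $\overline{Y}/\overline{X}=1/m$ and $\overline{Z}=W=0$. The first step is to confirm that the sign convention in \eqref{systinf1} is still the appropriate one in a neighbourhood of $Q_5$: from $\dot{X}=X[(m-1)Y-2X]$ and the relation $Y\sim X/m$ valid near $Q_5$ one obtains $(m-1)Y-2X\sim-\tfrac{m+1}{m}X<0$, hence $\dot{X}<0$ near $Q_5$, exactly as near $Q_1$.

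The second step is to linearize \eqref{systinf1} (with the flow-matching sign) at $(1/m,0,0)$. A direct computation gives an upper triangular Jacobian with diagonal entries
$$
\lambda_1=-1, \qquad \lambda_2=\sigma-\frac{1-p}{m}, \qquad \lambda_3=\frac{m+1}{m}.
$$
The standing hypothesis $\sigma>2(1-p)/(m-1)$ implies $\sigma>(1-p)/m$ (this last inequality being equivalent to $2m>m-1$), so $\lambda_2>0$; thus $Q_5$ has exactly one negative and two positive eigenvalues, yielding the two-dimensional unstable manifold and one-dimensional stable manifold stated in the lemma. Moreover, the stable direction is the $y$-axis $\{z=w=0\}$, which is an invariant curve contained in the equator of the Poincar\'e hypersphere, so the orbits reaching the finite part of the phase space are precisely those on the unstable manifold, along which $z\to0$, $w\to0$ and $y\to1/m$.

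The final step is to recover the profile behaviour. By \eqref{PSvar1} we have $Y/X=\xi f'(\xi)/f(\xi)$, so along an orbit going out of $Q_5$ this quotient tends to $1/m$. Proceeding as in the proofs of Lemmas \ref{lem.P1} and \ref{lem.P2}, I would show that this limit is necessarily taken as $\xi\to0$: it cannot occur at a finite $\xi_0\in(0,\infty)$, since then $f(\xi)\to f(\xi_0)$ and $X(\xi)=\tfrac{m}{\alpha}\xi^{-2}f^{m-1}(\xi)$ would stay bounded, contradicting $X=1/w\to\infty$ at $Q_5$; and it cannot occur as $\xi\to\infty$, since integrating $\xi f'/f=1/m+o(1)$ would give $f(\xi)=\xi^{1/m+o(1)}$ and hence $X(\xi)=\tfrac{m}{\alpha}\xi^{-(m+1)/m+o(1)}\to0$, again a contradiction. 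Therefore $\xi\to0$, and integrating $\tfrac{d}{d\xi}\ln\big(\xi^{-1/m}f(\xi)\big)=\tfrac1\xi\big(\xi f'/f-1/m\big)$ near the origin gives $f(\xi)\sim K\xi^{1/m}$ for some $K>0$, whence $f(0)=0$.

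The only delicate points are routine: getting the Jacobian and the direction of the flow right so that the eigenvalue signs are the ones stated, and, in order to pass from $f=\xi^{1/m+o(1)}$ to the sharp asymptotics $f\sim K\xi^{1/m}$, checking that $y-1/m$ decays like a positive power of $\xi$ along the unstable manifold (it is comparable to $w=1/X$), which makes the integrand above integrable at $\xi=0$. Both are handled exactly as in the earlier lemmas of this section, so no genuinely new difficulty appears in this case.
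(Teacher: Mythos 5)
Your proof is correct and follows essentially the same route as the paper: the same chart \eqref{systinf1} with the minus sign fixed by $\dot X<0$ near $Q_5$ (via $Y\sim X/m$), the same upper-triangular Jacobian at $(1/m,0,0)$ giving a one-dimensional stable and two-dimensional unstable manifold (your entry $\sigma-(1-p)/m=(m\sigma+p-1)/m$ is indeed the correct value), and the same integration of $\xi f'(\xi)/f(\xi)\to 1/m$ to obtain $f(\xi)\sim K\xi^{1/m}$ as $\xi\to0$. Your explicit exclusion of the alternatives $\xi\to\xi_0\in(0,\infty)$ and $\xi\to\infty$ is just a slightly more detailed version of what the paper does in the neighbouring Lemmas \ref{lem.P1} and \ref{lem.P2}, so no genuinely different argument is involved.
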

\begin{proof}
We infer again from \cite[Section 3.10]{Pe} that the flow in a neighborhood of the point $Q_5$ is topologically equivalent to the flow of the already considered system \eqref{systinf1} but in a neighborhood of the critical point $(y,z,w)=(1/m,0,0)$. Moreover, when approaching $Q_5$ we have
$$
\frac{X}{Y}=\frac{\overline{X}}{\overline{Y}}\sim m,
$$
whence $X\sim mY$ in a (finite) neighborhood of $Q_5$ and
$$
\dot{X}=X[(m-1)Y-2X]\sim-m(m+1)Y^2<0,
$$
thus we have to choose again the minus sign in the system \eqref{systinf1}. The linearization of \eqref{systinf1} near $Q_5$ (including the change of sign given by the minus sign in front of $\dot{y}$, $\dot{z}$, $\dot{w}$) has the matrix
$$
M(Q_5)=\left(
         \begin{array}{ccc}
           -1 & 0 & \frac{m\alpha-\beta}{m\alpha} \\
           0 & \frac{m\sigma+p+1}{m} & 0 \\
           0 & 0 & \frac{m+1}{m} \\
         \end{array}
       \right),
$$
thus we find a two-dimensional unstable manifold and a one-dimensional stable manifold. Analyzing the eigenvectors of the matrix $M(Q_5)$ we find that the orbits going out from $Q_5$ on the unstable manifold go to the finite part of the phase-space, while the orbits entering $Q_5$ on the stable manifold remain on the boundary of the hypersphere. In order to study the profiles contained in the orbits going out of $Q_5$, we deduce from the relation $X\sim mY$ that
$$
\frac{m}{\alpha}f^{m-1}(\xi)\xi^{-2} \sim \frac{m^2}{\alpha}f^{m-2}(\xi)f'(\xi)\xi^{-1},
$$
and after direct integration we obtain
$$
f(\xi)\sim K\xi^{1/m} \ \ {\rm as} \ \xi\to0, \qquad {\rm for} \ K>0,
$$
as desired.
\end{proof}
We remain with the point $Q_4$, that brings nothing new for our analysis. We indeed have
\begin{lemma}\label{lem.Q4}
There are no profiles contained in the orbits connecting to the critical point $Q_4$.
\end{lemma}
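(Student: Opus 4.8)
The plan is to perform the standard Poincaré-sphere local analysis at $Q_4=(0,0,1,0)$, exactly as for the other critical points at infinity, and show that the only orbits through $Q_4$ stay on the equator $\{W=0\}$ of the hypersphere and hence correspond to no genuine profile of Eq. \eqref{SSODE}. Concretely, I would first apply part (c) of \cite[Theorem 5, Section 3.10]{Pe} (the chart dividing by the $Z$-variable) to write the flow near $Q_4$ as the flow near the origin $(x,y,w)=(0,0,0)$ of the corresponding system, choosing the sign so that the direction of the flow matches that of the original system \eqref{PSsyst1} near $Q_4$: since $\dot Z=Z[(m+p-2)Y+(\sigma-2)X]$ and near $Q_4$ one has $|Z|\to\infty$ dominating the bracket only through its own contribution, the sign of $\dot Z$ is governed by the lower-order terms, which I would pin down just as in Lemmas \ref{lem.Q1}--\ref{lem.Q5}.

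Next I would linearize that system at the origin. The expected outcome — consistent with the fact that $Q_4$ lies in the invariant plane $\{X=0\}$ intersected with the equator — is that the eigenvalue associated with the $W$-direction has a sign making the $W=0$ face attracting (or at least non-escaping) for the flow, so that every orbit meeting $Q_4$ lies entirely on $\{W=0\}$, i.e. at infinity. I would then argue, as in the analogous place in \cite{IS19a}, that an orbit confined to $\{W=0\}$ cannot contain a profile: recall that $W=0$ means the original phase-space variables $X,Y,Z$ of \eqref{PSvar1} blow up, and translating back through \eqref{PSvar1}, such an orbit does not correspond to a solution $f$ of \eqref{SSODE} that is a good profile in the sense of Definition \ref{def1}. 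Alternatively, if the linearization shows an unstable direction leaving the equator, I would check (via the dominant-balance/L'Hôpital contradiction arguments already used repeatedly in Section \ref{sec.local}) that the resulting behavior of $f$ is incompatible with \eqref{SSODE} as $\xi\to 0$, $\xi\to\infty$, or $\xi\to\xi_0$, ruling out a profile in every case.

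The main obstacle I anticipate is purely bookkeeping: correctly writing down the chart-$Z$ version of \eqref{PSsyst1} at $Q_4$, getting the sign of the rescaled time right, and then checking that none of the invariant-plane orbits through $Q_4$ escape into the physical region $X\ge 0$, $Z\ge 0$ with $W>0$. There is no conceptual difficulty — the point is genuinely degenerate/uninformative, which is exactly why the statement is that it ``brings nothing new'' — so the proof should be short, citing \cite[Section 3.10]{Pe} for the chart and \cite[Lemma 2.7]{IS19a} (or the analogous lemma) for the translation back to profiles, and I would explicitly note that the only connections through $Q_4$ are heteroclinics along the equator to the other infinite critical points, carrying no solution of the profile equation.
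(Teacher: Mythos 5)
Your proposed route does not work as stated, and the gap is concrete: in the chart that divides by the $Z$-variable, the desingularized system near $Q_4$ is
$x'=x[(1-p)y-\sigma x]$, $y'=-(m+p-1)y^2-(\sigma-1)xy+w\left(x-\frac{\beta}{\alpha}y-1\right)$, $w'=-w[(m+p-2)y+(\sigma-2)x]$,
whose linearization at the origin has the single nonzero entry $\partial y'/\partial w=-1$ and hence all eigenvalues equal to zero. So there is no eigenvalue attached to the $W$-direction whose sign you can read off, and the expected conclusion that every orbit meeting $Q_4$ stays on the equator cannot be extracted from a linearization; one would need blow-up or center-manifold-type arguments at a fully degenerate point, which is precisely what the paper avoids. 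Note also that the statement to prove is not that no orbit connects to $Q_4$, but that orbits connecting to $Q_4$ contain no profiles; and your remark that $W=0$ forces $X,Y,Z$ to blow up is inaccurate at $Q_4$, where only $Z\to\infty$ is forced while $X/Z\to0$ and $Y/Z\to0$.

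Your fallback sentence (dominant-balance contradictions on $f$ in the three regimes $\xi\to0$, $\xi\to\xi_0$, $\xi\to\infty$) is in fact the entire content of the paper's proof, but you leave it unexecuted, and it is not short. The paper translates the approach to $Q_4$ into the conditions $\xi^{\sigma-2}f^{m+p-2}\to\infty$, $\xi^{\sigma}f^{p-1}\to\infty$ and $\xi^{1-\sigma}f^{-p}f'\to0$, then rules out $\xi\to\xi_0\in(0,\infty)$ immediately (the first two conditions force simultaneously $f\to\infty$ and $f\to0$ since $m+p-2>0$ and $p-1<0$), rules out $\xi\to0$ by splitting on $\sigma\geq2$ versus $2(1-p)/(m-1)<\sigma<2$ (in the latter case the two-sided bound $\xi^{(2-\sigma)/(m+p-2)}<f(\xi)<\xi^{\sigma/(1-p)}$ contradicts $\sigma(m-1)+2(p-1)>0$), and finally handles $\xi\to\infty$, the hardest case, by first proving from \eqref{SSODE} that $\xi^{\sigma}f^{p-1}\to\infty$ forces $f$ to be eventually monotone decreasing to $0$ (excluding unbounded sequences of local minima and positive limits), and then deriving a contradiction separately for $\sigma\leq2$ (where $\xi^{\sigma-2}f^{m+p-2}\to0$) and for $\sigma>2$ (via a sign argument on the rewritten equation \eqref{interm11} along a subsequence with $(f^m)''(\xi_n)>0$). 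None of these steps, in particular the monotonicity argument and the $\sigma>2$ case at infinity, appears in your proposal, so as written it does not constitute a proof.
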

\begin{proof}
The point $Q_4$ is characterized by $Z\to\infty$ and $Z/X\to\infty$, $Z/Y\to\infty$ on orbits entering or going out of $Q_4$, that implies in particular that
\begin{equation}\label{interm9}
\xi^{\sigma-2}f(\xi)^{m+p-2}\to\infty, \qquad \xi^{\sigma}f(\xi)^{p-1}\to\infty, \qquad \xi^{1-\sigma}f^{-p}(\xi)f'(\xi)\to0.
\end{equation}
It is obvious that \eqref{interm9} cannot be fulfilled for $\xi\to\xi_0\in(0,\infty)$, as that would mean on the one hand that $f(\xi)\to\infty$ as $\xi\to\xi_0$ and on the other hand that $f(\xi)\to0$ as $\xi\to\xi_0$, since $m+p-2>0$ but $p-1<0$, and a contradiction.

Assume now for contradiction that \eqref{interm9} holds true as $\xi\to0$. If $\sigma\geq2$, thus $\sigma-2\geq0$, we immediately reach a contradiction since \eqref{interm9} implies that at the same time $f^{p-1}(\xi)\to\infty$ and $f^{m+p-2}(\xi)\to\infty$ as $\xi\to0$, and a contradiction. We remain with the case $2(1-p)/(m-1)<\sigma<2$. In that case, we deduce from \eqref{interm9} that in a right-neighborhood of the origin both expressions are larger than 1, thus
$$
\xi^{(2-\sigma)/(m+p-2)}<f(\xi)<\xi^{\sigma/(1-p)}
$$
for $\xi\in(0,\xi_0)$ for some $\xi_0\in(0,1)$. This in particular implies that
$$
\frac{2-\sigma}{m+p-2}>\frac{\sigma}{1-p}
$$
or equivalently $\sigma(m-1)+2(p-1)<0$ and a contradiction with the choice of $\sigma$. Finally, let us assume that \eqref{interm9} holds true as $\xi\to\infty$. Making use of the differential equation for the profiles \eqref{SSODE}, one can prove that the condition
\begin{equation}\label{interm10}
\lim\limits_{\xi\to\infty}\xi^{\sigma}f(\xi)^{p-1}=\infty
\end{equation}
implies that $f$ is strictly decreasing on some interval $(R,\infty)$ for some large $R$ and $f(\xi)\to0$ as $\xi\to\infty$. This has been done in detail in \cite[Lemma 2.8, Steps 1-3]{IS19b} and for the sake of completeness we only sketch here these steps. In a first step, one can show that $f(\xi)$ is monotonic in a neighborhood at infinity. Indeed, supposing that $(\xi_{0,n})_{n\to\infty}$ is an unbounded sequence of local minima for the profile $f$, we readily deduce by evaluating \eqref{SSODE} at $\xi=\xi_{0,n}$ that
$$
\xi_{0,n}^{\sigma}f(\xi_{0,n})^p\leq\alpha f(\xi_{0,n}), \quad n\geq1,
$$
whence $\xi_{0,n}^{\sigma}f(\xi_{0,n})^{p-1}\leq\alpha$, which contradicts \eqref{interm10}. Thus, as no unbounded sequence of local minima exists, $f$ is monotone on some interval $(R,\infty)$ and there exists $L=\lim\limits_{\xi\to\infty}f(\xi)$. It is then easy to discard with the aid of \eqref{SSODE} that $f(\xi)\to\infty$ as $\xi\to\infty$. The possibility that $L\in(0,\infty)$ is also discarded as follows: standard calculus results (see for example \cite[Lemma 2.9]{IL13}) give that there exists a subsequence $\{\xi_n\}_{n\geq1}$ such that
$$
\lim\limits_{n\to\infty}(f^m)''(\xi_n)=\lim\limits_{n\to\infty}\xi_nf'(\xi_n)=0, \quad \lim\limits_{n\to\infty}\xi_n=\infty.
$$
We then find by evaluating then \eqref{SSODE} at $\xi=\xi_n$ that
$$
\lim\limits_{n\to\infty}(\xi_n^{\sigma}f(\xi_n)^{p}-\alpha f(\xi_n))=0,
$$
which is again in contradiction with \eqref{interm10}. We thus remain with the case $f(\xi)\to0$ as $\xi\to\infty$ and strictly decreasing on some interval $(R,\infty)$, $R>0$. Then, if $2(1-p)/(m-1)<\sigma\leq2$ we already get a contradiction with \eqref{interm9}, since in that case
$$
\lim\limits_{\xi\to\infty}\xi^{\sigma-2}f(\xi)^{m+p-2}=0.
$$
If $\sigma>2$, we can write \eqref{SSODE} in the form
\begin{equation}\label{interm11}
(f^m)''(\xi)+\left(\frac{\xi^{\sigma}f^{p-1}(\xi)}{2}-\alpha\right)f(\xi)+\frac{\xi^{\sigma}f^{p-1}(\xi)}{2}+\beta\xi f'(\xi)=0.
\end{equation}
We infer from \eqref{interm9} that there exists $R_0>R$ sufficiently large such that for any $\xi>R_0$ we have
$$
\left(\frac{\xi^{\sigma}f^{p-1}(\xi)}{2}-\alpha\right)f(\xi)>0
$$
and also
$$
\frac{\xi^{\sigma}f^{p-1}(\xi)}{2}+\beta\xi f'(\xi)=\xi^{\sigma}f(\xi)^p\left(\frac{1}{2}+\xi^{1-\sigma}f(\xi)^{-p}f'(\xi)\right)>0.
$$
Since there exists at least a subsequence $(\xi_n)_{n\geq1}$ such that $\xi_n\to\infty$ and $(f^m)''(\xi_n)>0$ for any positive integer $n$, the above inequalities contradict \eqref{interm11} evaluated at $\xi=\xi_n$ for $n$ large enough, ending the proof.
\end{proof}
We close this section with a \emph{local uniqueness} of profiles with interface of Type I. This will allow us to employ the \emph{backward shooting method} to prove the existence of good profiles with interface of Type I in the next section.
\begin{proposition}\label{prop.uniq}
For any $\xi_{0}\in(0,\infty)$ there exists a unique profile (good or not) such that $f(\xi_0)=0$ and $f$ has an interface of Type I at $\xi=\xi_0$.
\end{proposition}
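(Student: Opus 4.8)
The plan is to localise near the prospective interface, recast the existence of a Type I profile there as a fixed-point equation whose solution is manifestly unique, and then extend backwards along the positivity set by the standard (non-degenerate) uniqueness for \eqref{SSODE}. It is convenient to pass to the pressure variable $w=f^{m-1}$, which turns \eqref{SSODE} into
$$
w\,w''+\frac{1}{m-1}(w')^2+\frac{\beta\xi}{m}\,w'-\frac{(m-1)\alpha}{m}\,w+\frac{(m-1)\xi^{\sigma}}{m}\,w^{(m+p-2)/(m-1)}=0 .
$$
Evaluating this identity at a zero of $w$ shows at once that $w'$ there equals either $0$ (the faster, Type II, vanishing) or $-b$, with $b:=\beta(m-1)\xi_0/m>0$; and inserting the ansatz $f\sim C\,s^{1/(m-1)}$, $s:=\xi_0-\xi$, into \eqref{SSODE}, using $m+p>2$ so that both the reaction term $\xi^{\sigma}f^{p}$ and the zero-order term $\alpha f$ are strictly subordinate to $(f^m)''$ and $\beta\xi f'$, identifies a Type I interface at $\xi_0$ precisely with the conditions $w(\xi_0)=0$, $w'(\xi_0^-)=-b$ (equivalently $C=(\beta(m-1)\xi_0/m)^{1/(m-1)}$, matching \eqref{Beh.P1}).

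Since $w'(\xi_0^-)=-b\neq0$, near the interface one inverts $\xi\leftrightarrow w$ and regards $\xi$ and $q:=w'$ as functions of $w\in[0,\delta]$, with $\xi(0)=\xi_0$, $q(0)=-b$. The transformed system reads schematically
$$
\frac{dq}{dw}=-\frac{1}{(m-1)w}\,(q+b)+G_1(w,q,\xi),\qquad \frac{d\xi}{dw}=-\frac1b+G_2(w,q),
$$
where $G_1,G_2$ are continuous, depend on $w$ only through nonnegative powers ($w,w^{\gamma},w^{2\gamma},\dots$, with $\gamma:=(m+p-2)/(m-1)\in(0,1)$), and are smooth in $(q,\xi)$ near $(-b,\xi_0)$ because $q$ stays away from $0$. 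Multiplying the singular linear part by the integrating factor $w^{1/(m-1)}$ and integrating from $0$, one rewrites the system as a fixed-point equation $(q,\xi)=\mathcal{T}(q,\xi)$ on a small ball around the constants $(-b,\xi_0)$ in $C([0,\delta])$; the homogeneous solution $w^{-1/(m-1)}$ of the singular part is automatically excluded by demanding that $q$ be bounded --- which is exactly what ``having a Type I interface'' requires. A routine estimate (the reaction contribution becomes the genuine power $w^{p/(m-1)}$ after multiplication by the integrating factor, hence is harmless) shows $\mathcal{T}$ is a contraction for $\delta$ small. Its unique fixed point gives the unique Type I profile on $(\xi_0-\delta,\xi_0]$; since $f=w^{1/(m-1)}$ is then positive and solves a regular ODE on $(\xi_0-\delta,\xi_0)$, standard uniqueness propagates it backwards, proving the statement.

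The hard part is the second step: the equation for $w$ degenerates at the interface (the coefficient of $w''$ vanishes), so Cauchy--Lipschitz does not apply directly and one must verify that the degeneracy is benign --- that the leading balance already forces $q(0)=-b$, and that after desingularisation the singular coefficient is the ``good'' $-1/((m-1)w)$, so that imposing boundedness selects a single solution rather than a one-parameter family. A secondary point, also confined here, is that $\gamma<1$ makes the explicit $w$-dependence merely H\"older; since we never approach $w=0$ this does not obstruct the contraction, but it dictates working in $C^{0}$ (or a H\"older class) rather than $C^{2}$. Alternatively one could argue via the phase space --- Type I profiles correspond exactly to the orbits entering the saddle point $P_1$ on its two-dimensional stable manifold (Lemma \ref{lem.P1}), and a first integral of the system \eqref{PSsyst1} augmented by $\xi'=\xi X$, namely $Z\,\xi^{-(\sigma(m-1)+2(p-1))/(m-1)}X^{-(m+p-2)/(m-1)}$, recovers $\xi_0$ explicitly along each such orbit --- but checking that the resulting orbit-to-$\xi_0$ map is injective appears no easier than the contraction above.
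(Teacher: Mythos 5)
Your route is genuinely different from the paper's. The paper does not work with the pressure equation at all: it introduces the change of variables \eqref{PSvar2}, $x=f^{m+p-2}$, $y=f^{m-2}f'$, $z=\xi$, precisely so that the Type I interface behavior, which in the system \eqref{PSsyst1} is collapsed into the single point $P_1$ (Lemma \ref{lem.P1}), gets split into a whole line of critical points $P(\xi_0)=(0,-\beta\xi_0/m,\xi_0)$ indexed by the interface location; local uniqueness then follows from dynamical-systems theory at these non-hyperbolic points (eigenvalues $-(m+p-2)\beta\xi_0<0$, $\beta\xi_0>0$, $0$): every one-dimensional center manifold must contain a segment of the line of equilibria, hence is unique, the one-dimensional stable and unstable manifolds are unique, and exactly one orbit enters $P(\xi_0)$ from outside the invariant plane $\{x=0\}$. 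In other words, the injectivity difficulty you mention at the end of your proposal (``orbit-to-$\xi_0$ map'') is exactly what the paper removes by the change of variables, rather than by a first integral. Your alternative --- desingularizing near the interface by taking $w=f^{m-1}$ as independent variable, writing an integral equation with the integrating factor $w^{1/(m-1)}$, and letting boundedness of $q=w'$ kill the homogeneous solution $w^{-1/(m-1)}$ --- is a classical and viable strategy, arguably more self-contained than invoking center-manifold uniqueness, and it yields the sharp local description of the solution as a by-product.

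Two points in your sketch need repair before it is a proof. First, the schematic form of the equation for $q$ is not correct as stated: computing from your pressure equation one gets, besides the isolated term $-(q+b)/((m-1)w)$, the additional singular contributions $-\beta(\xi-\xi_0)/(mw)$ and $-\bigl((m-1)\xi^{\sigma}/(mq)\bigr)w^{(p-1)/(m-1)}$, so $G_1$ is \emph{not} a function of $(w,q,\xi)$ depending on $w$ only through nonnegative powers. The reaction term you already tame correctly via the integrating factor (it becomes $w^{p/(m-1)}$), but the coupling term $(\xi-\xi_0)/w$ must be handled by substituting $\xi(w)-\xi_0=\int_0^w ds/q(s)$ into the operator (or by a weighted norm), after which it is bounded by $\|q_1-q_2\|_\infty/(b-\rho)^2$ uniformly in $w$ and the contraction goes through for $\delta$ small. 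Second, and more substantively, your uniqueness only covers competitors for which the inversion $\xi\leftrightarrow w$ is legitimate and $q$ stays in a small ball around $-b$ up to $w=0$; the hypothesis of Proposition \ref{prop.uniq} is an interface of Type I, i.e.\ the asymptotic $w\sim b(\xi_0-\xi)$, and passing from this to monotonicity of $w$ near $\xi_0$ and to $w'(\xi^-)\to-b$ (rather than merely the quotient converging) requires an argument using the ODE --- note that boundedness of $q$ alone does not even exclude the Type II contact, where $q\to0$. You flag this as ``the hard part'' but do not carry it out; as it stands you have uniqueness within the class of profiles whose pressure has one-sided derivative $-b$ at $\xi_0$, which is slightly smaller than what the statement (and the matching constant in \eqref{Beh.P1}) asks for. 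Both gaps are repairable, but they are where the actual work lies in your approach.
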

\begin{proof}
For this proof, it is not easy to work with our system \eqref{PSsyst1} since all the profiles with interface of Type I are gathered in the critical point $P_1$. We thus use a different change of variable which identifies the profiles in terms of their interface point by letting
\begin{equation}\label{PSvar2}
x(\eta)=f^{m+p-2}(\xi), \ y(\eta)=(f^{m-2}f')(\xi), \ z(\eta)=\xi, \qquad \frac{d\eta}{d\xi}=mf^{m-1}(\xi),
\end{equation}
thus obtaining the following system
\begin{equation}\label{PSsyst2}
\left\{\begin{array}{ll}\dot{x}=m(m+p-2)xy,\\
\dot{y}=-my^2-\beta yz+\alpha x^{(m-1)/(m+p-2)}-|z|^{\sigma}x,\\
\dot{z}=mx^{(m-1)/(m+p-2)}.\end{array}\right.
\end{equation}
Since $m-1>m+p-2>0$, it is easy to check that the points with behavior interface of Type I are identified as the critical line $my+\beta z=0$ in the invariant plane $\{x=0\}$, that is, the critical points of coordinates $P(\xi_0)=(0,-\beta\xi_0/m,\xi_0)$ for $\xi_0\in(0,\infty)$ given. The linearization of the system \eqref{PSsyst2} in a neighborhood of this critical point has the matrix
$$M(\xi_0)=\left(
  \begin{array}{ccc}
    -(m+p-2)\beta\xi_0 & 0 & 0 \\
    -\xi_0^{\sigma} & \beta\xi_0 & \frac{\beta^2\xi_0}{m} \\
    0 & 0 & 0 \\
  \end{array}
\right),
$$
with eigenvalues $\lambda_1=-(m+p-2)\beta\xi_0<0$, $\lambda_2=\beta\xi_0>0$ and $\lambda_3=0$. We infer from \cite[Theorem 2.15, Chapter 9]{CH} and the Local Center Manifold Theorem \cite[Theorem 1, Section 2.10]{Pe} that all the center manifolds (recall that the center manifold may not be unique) of dimension one in the neighborhood of $P(\xi_0)$ have to contain a segment of the invariant line $\{x=0, my+\beta z=0\}$. We thus readily deduce that the center manifold near $P(\xi_0)$ is unique and by well-known results also the one-dimensional stable and unstable manifolds are unique. Similarly to the analysis in \cite[Lemma 2.2]{IS19a} (see also \cite{dPS02} for more details), there exists only one orbit entering $P(\xi_0)$ from outside the invariant plane $\{x=0\}$. All the other orbits are contained in the plane $\{x=0\}$ and do not contain profiles. We thus obtain the desired uniqueness.
\end{proof}

\section{Existence of good profiles with interface of Type I}\label{sec.type1}

In this section, we employ the local analysis performed in the previous sections to show that for any $\sigma>2(1-p)/(m-1)$ there exists at least one good profile with interface of Type I. Since the same fact for profiles with interface of Type II has been proved in Section \ref{sec.type2}, this completes the proof of Theorem \ref{th.exist}. The strategy used to prove this existence result is the \emph{backward shooting method}, that is shooting from the interface point $\xi=\xi_0\in(0,\infty)$ and trace backward the unique profile with interface of Type I exactly at $\xi=\xi_0$, according to Proposition \ref{prop.uniq}. The idea is to show that profiles with an interface at $\xi_0>0$ very small are strictly decreasing, while profiles with an interface at $\xi_0$ large have a change of sign at some point $\xi_1\in(0,\xi_0)$. However, because of techical reasons we cannot perform the backward shooting in the phase space associated to the system \eqref{PSsyst1} and we introduce a new change of variables by setting $Z=UV$, $X=U^{(m-1)/(m+p-2)}$ or equivalently
\begin{equation}\label{PSvar3}
\begin{split}
&U=X^{(m+p-2)/(m-1)}=\left(\frac{m}{\alpha}\right)^{(m+p-2)/(m-1)}\xi^{-2(m+p-2)/(m-1)}f(\xi)^{m+p-2}, \\
&V=\frac{Z}{U}=\frac{1}{\alpha}\left(\frac{m}{\alpha}\right)^{(1-p)/(m-1)}\xi^{[\sigma(m-1)+2(p-1)]/(m-1)}.
\end{split}
\end{equation}
In variables $(U,Y,V)$ we obtain the autonomous dynamical system
\begin{equation}\label{PSsyst3}
\left\{\begin{array}{ll}\dot{U}=\frac{m+p-2}{m-1}U[(m-1)Y-2U^{(m-1)/(m+p-2)}],\\
\dot{Y}=-Y^2-\frac{\beta}{\alpha}Y+U^{(m-1)/(m+p-2)}(1-Y)-UV,\\
\dot{V}=\frac{\sigma(m-1)+2(p-1)}{m-1}U^{(m-1)/(m+p-2)}V,\end{array}\right.
\end{equation}
and we notice that, despite the fact that the system \eqref{PSsyst3} is no longer quadratic, it has a very important property that the third equation is very simple and \emph{the component $V$ is non-decreasing} along the trajectories in the phase space. We moreover notice that $V$ is a power of $\xi$ and the behavior of interface of Type I means now orbits entering the critical points $P(v_0)=(0,-\beta/\alpha,v_0)$ with $v_0\geq0$. The uniqueness proved in Proposition \ref{prop.uniq} can be easily transferred here and thus get that for every $v_0>0$ there exists a unique orbit entering the critical point $P(v_0)$ coming from the interior of the phase space and containing the unique profile with interface at the point $\xi=\xi_0\in(0,\infty)$ given by
\begin{equation}\label{interm14}
v_0=\frac{1}{\alpha}\left(\frac{m}{\alpha}\right)^{(1-p)/(m-1)}\xi_0^{[\sigma(m-1)+2(p-1)]/(m-1)}.
\end{equation}
We are thus ready to start our backward shooting method, which is formalized in the two propositions below.
\begin{proposition}\label{prop.close}
In the previous notation, the orbits entering points $P(v_0)$ with $v_0>0$ sufficiently small contain profiles $f(\xi)$ that are decreasing and have a negative slope at $\xi=0$, that is, $f(0)=a>0$, $f'(0)<0$.
\end{proposition}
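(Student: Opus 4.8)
The plan is to turn ``$v_0$ small'' into a genuine small-parameter perturbation of the unweighted porous-medium self-similar equation, whose compactly supported profile is strictly decreasing with strictly negative slope at the origin. First I would pin the interface at a fixed point by rescaling: writing $f(\xi)=\xi_0^{2/(m-1)}g(\zeta)$ with $\zeta=\xi/\xi_0$, equation \eqref{SSODE} becomes
\begin{equation*}
(g^m)''(\zeta)-\alpha g(\zeta)+\beta\zeta g'(\zeta)+\mu\,\zeta^{\sigma}g^{p}(\zeta)=0,\qquad \zeta\in(0,1),
\end{equation*}
where $\mu:=\xi_0^{[\sigma(m-1)+2(p-1)]/(m-1)}$, which by \eqref{interm14} equals a fixed positive constant times $v_0$; thus $v_0\to0$ is the same as $\mu\to0^{+}$. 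A profile with an interface of Type I at $\xi=\xi_0$ corresponds to a solution $g=g_\mu$ of this equation having an interface of Type I at $\zeta=1$, and by Proposition \ref{prop.uniq} (transcribed into the rescaled variables, or re-proved by the same change of variables) such a $g_\mu$ is unique for each fixed $\mu\ge0$. Since $m+p>2$ the reaction term is subdominant at $\zeta=1$, so the leading behaviour $g_\mu(\zeta)\sim[(m-1)\beta/m]^{1/(m-1)}(1-\zeta)^{1/(m-1)}$ is the \emph{same} for all $\mu$, only the lower-order corrections depending on $\mu$.

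I would then analyse the limit equation $\mu=0$, namely $(g^m)''-\alpha g+\beta\zeta g'=0$ (incidentally this is exactly the self-similar ODE of $u_t=(u^m)_{xx}$, because $\alpha(m-1)=1+2\beta$ is forced by \eqref{SSexp}). Shooting backward from the Type I interface at $\zeta=1$ produces $g_0$, and the key point --- clean here precisely because the reaction term is gone --- is that $g_0$ has no interior critical point, hence is strictly decreasing on $(0,1)$. Indeed, since $g_0>0$ on $(0,1)$ the equation is a regular second order ODE there, so $g_0\in C^2((0,1))$; if $\zeta_1:=\sup\{\zeta\in(0,1):g_0'(\zeta)=0\}$ existed then $g_0'<0$ on $(\zeta_1,1)$ (because $g_0$ decreases to $0$ at the interface), while evaluating the equation at $\zeta_1$ gives $(g_0^m)''(\zeta_1)=\alpha g_0(\zeta_1)>0$, hence $g_0''(\zeta_1)>0$, so $\zeta_1$ would be a strict local minimum of $g_0$, contradicting $g_0'<0$ immediately to its right. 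So $g_0'<0$ throughout $(0,1)$; a routine continuation estimate (equivalently, the confinement of the corresponding orbit of \eqref{PSsyst3} to the invariant plane $\{V=0\}$) shows that $g_0$ stays bounded and extends to $[0,1]$, whence $g_0(0)\in(0,\infty)$, and $g_0'(0)<0$ strictly, for if $g_0'(0)=0$ the equation would give $(g_0^m)''(0)=\alpha g_0(0)>0$ and $0$ would be a local minimum, again contradicting $g_0'<0$ on $(0,1)$.

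Finally I would transfer this to small $\mu>0$ by continuous dependence. On a fixed interval $[1-\delta,1]$ the common leading asymptotics together with a contraction/Gronwall estimate for the correction (uniform in $\mu\in[0,\mu_0]$, the reaction contributing an $O(\mu)$ perturbation there) give $g_\mu\to g_0$ uniformly on $[1-\delta,1]$ with $g_\mu(1-\delta)\to g_0(1-\delta)$ and $g_\mu'(1-\delta)\to g_0'(1-\delta)$. On the non-degenerate interval $[0,1-\delta]$ the equations are regular, so standard continuous dependence on the parameter $\mu$ and on the Cauchy data at $\zeta=1-\delta$ yields $g_\mu\to g_0$ in $C^1([0,1-\delta])$. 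Combining, $g_\mu\to g_0$ in $C([0,1])$ and in $C^1$ away from $\zeta=1$; hence for $\mu$ (equivalently $v_0$) small enough $g_\mu(0)>0$, $g_\mu'(0)<0$, and $g_\mu'<0$ on $(0,1)$ (using that $g_0'$ is bounded away from $0$ on $[0,1-\delta]$ and that near $\zeta=1$ the Type I profile is decreasing). Undoing the scaling, $f(0)=\xi_0^{2/(m-1)}g_\mu(0)=:a>0$ and $f'(0)=\xi_0^{(3-m)/(m-1)}g_\mu'(0)<0$, with $f$ decreasing on $(0,\xi_0)$, which is exactly the statement.

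The step I expect to be the main obstacle is the continuous dependence \emph{through the free boundary}: since the equation degenerates at $\zeta=1$, one cannot prescribe $C^1$ Cauchy data exactly there, so one must glue the uniform-in-$\mu$ local description near $\zeta=1$ (coming from the analysis behind Proposition \ref{prop.uniq}) to the regular theory on $[0,1-\delta]$. A secondary but necessary technical point is the a priori bound excluding interior blow-up of $g_0$ and $g_\mu$; this can alternatively be read off from the phase space of \eqref{PSsyst3}, where the orbit entering $P(v_0)$ is trapped in the slab $\{0\le V\le v_0\}$ and, as $v_0\to0$, shadows the unique orbit of the $\{V=0\}$ subsystem joining $P_1$ to $Q_1$ along the direction corresponding to $f'(0)<0$.
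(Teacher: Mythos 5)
Your strategy is genuinely different from the paper's and the limit analysis in it is sound: the rescaling $f(\xi)=\xi_0^{2/(m-1)}g(\xi/\xi_0)$ does convert $v_0\to0$ into a small parameter $\mu$ multiplying the reaction (the identity $\alpha(m-1)=1+2\beta$ from \eqref{SSexp} is correct, so the $\mu=0$ equation is indeed the backward self-similar PME equation), and your no-interior-critical-point argument plus the evaluation at a putative zero of $g_0'$ correctly yields that the limit profile is strictly decreasing with $g_0(0)>0$, $g_0'(0)<0$. The paper proceeds in an entirely different, non-perturbative way: working in the system \eqref{PSsyst3}, where $V$ is nondecreasing along trajectories, it observes that an orbit which is not monotone must cross the plane $\{Y=-\beta/2\alpha\}$ downward, that such a crossing is only possible where $V\geq h(U):=\left(1+\frac{\beta}{2\alpha}\right)U^{(1-p)/(m+p-2)}+\frac{\beta^2}{4\alpha^2 U}$, and that $h$ has an explicit positive minimum $\overline{v_0}=h(U_0)$; monotonicity of $V$ then forces $v_0\geq\overline{v_0}$, so for $v_0\leq\overline{v_0}$ the profile is decreasing all the way back to $\xi=0$, with no limit procedure and with a quantitative threshold. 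Your approach, if completed, buys a conceptual explanation (small-$v_0$ profiles are perturbations of the reaction-free PME profile), but at the price of a much heavier machinery.

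The genuine gap is exactly the step you flag: continuous dependence of the unique Type I interface solution $g_\mu$ on $\mu$ at $\mu=0$, uniformly through the degenerate point $\zeta=1$. This is not a routine Gronwall/contraction estimate, because at the interface the equation is non-Lipschitz on two counts (the degenerate diffusion where $g=0$ and the term $g^p$ with $p<1$), so there are no $C^1$ Cauchy data to perturb at $\zeta=1$; what is really needed is a parameter-uniform version of the local existence/uniqueness analysis behind Proposition \ref{prop.uniq} (equivalently, continuity as $v_0\to0^+$ of the one-dimensional stable fibers along the critical line $\{U=0,\,Y=-\beta/\alpha\}$ of \eqref{PSsyst3}, whose limit fiber at $v_0=0$ lies inside the invariant plane $\{V=0\}$). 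This same uniformity is also needed for your final claim that $g_\mu'<0$ on a $\mu$-independent neighborhood $[1-\delta,1)$ of the interface; without it your argument only controls $g_\mu$ on $[0,1-\delta]$ and leaves a region near $\zeta=1$ uncovered. None of this is fatal — the slab-trapping $\{0\leq V\leq v_0\}$ you mention, together with invariant-manifold dependence on parameters, should close it — but as written the central step is asserted rather than proved, whereas the paper's barrier argument settles the proposition in a few lines.
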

\begin{proof}
Since any profile with interface is decreasing in a neighborhood of the interface point, recalling that at any point $P(v_0)$ we have $Y=-\beta/\alpha$, a non-decreasing profile must cross first the plane $\{Y=0\}$ in the phase space associated to the system \eqref{PSsyst3} and then also the plane $\{Y=-\beta/2\alpha\}$ before reaching any of the critical points $P(v_0)$. The direction of the flow on the plane $\{Y=-\beta/2\alpha\}$ is given by the sign of the expression
\begin{equation*}
\begin{split}
F(U,V)&=-\frac{\beta^2}{4\alpha^2}+\frac{\beta^2}{2\alpha^2}+U^{(m-1)/(m+p-2)}\left(1+\frac{\beta}{2\alpha}\right)-UV\\
&=U\left[\left(1+\frac{\beta}{2\alpha}\right)U^{(1-p)/(m+p-2)}-V\right]+\frac{\beta^2}{4\alpha^2}.
\end{split}
\end{equation*}
The orbits crossing this plane have to do it in the region where $F(U,V)<0$, which is equivalent to
\begin{equation}\label{interm13}
V\geq h(U):=\left(1+\frac{\beta}{2\alpha}\right)U^{(1-p)/(m+p-2)}+\frac{\beta^2}{4U\alpha^2}.
\end{equation}
One can readily optimize in $U$ in the expression in \eqref{interm13} to find that $h(U)$ has a positive minimum $\overline{v_0}=h(U_0)$ attained at
$$
U_0=\left[\frac{(m+p-2)(m-p)^2}{2(\sigma+2)(2\sigma+4+m-p)(1-p)}\right]^{(m+p-2)/(m-1)}
$$
Since the variable $V$ is monotone increasing along the trajectories, it follows that an orbit crossing the plane $\{Y=-\beta/2\alpha\}$ can reach critical points $P(v_0)$ only for $v_0>\overline{v_0}=h(U_0)$. Thus the profiles contained in the orbits entering the points $P(v_0)$ with $v_0\leq h(U_0)$ are  decreasing.
\end{proof}

\noindent \textbf{Remark.} Let $f$ be such a decreasing profile (as obtained in Proposition \ref{prop.close} for $v_0$ small). Then the self-similar formula
$$
u(x,t)=(T-t)^{-\alpha}f(|x|(T-t)^{\beta}), \qquad T>0, \ t\in(0,T),
$$
gives a one-parameter family of \textbf{supersolutions} to Eq. \eqref{eq1} for any such fixed profile $f(\xi)$. We stress here that these supersolutions will be strongly used for comparison in the forthcoming paper \cite{IMS20} in order to prove the local existence and finite speed of propagation of general solutions to a similar equation to \eqref{eq1} with compactly supported initial conditions in the range $m+p>2$.

\medskip

\noindent With respect to shooting from $\xi_0\in(0,\infty)$ very large we state
\begin{proposition}\label{prop.far}
In the previous notation, the orbits entering points $P(v_0)$ with $v_0>0$ sufficiently large contain profiles $f(\xi)$ with a \emph{backward change of sign} at some point $\xi_1\in(0,\xi_0)$ in the following sense
$$
f(\xi_1)=0, \qquad (f^m)'(\xi_1)>0, \qquad f(\xi)>0 \ {\rm for } \ \xi\in(\xi_1,\xi_0),
$$
where $xi_0$ and $v_0$ are related by \eqref{interm14}.
\end{proposition}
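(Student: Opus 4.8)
The plan is to carry out the backward shooting inside the phase space of the autonomous system \eqref{PSsyst3}, using that $V$ is non-decreasing along trajectories and, by \eqref{PSvar3}, a strictly increasing function of $\xi$, so that following $\gamma_{v_0}$ backward exactly means letting $\xi$ decrease from $\xi_0$. Here $\gamma_{v_0}$ denotes the unique orbit entering $P(v_0)=(0,-\beta/\alpha,v_0)$ from the interior of the phase space, which exists and is unique by transferring Proposition~\ref{prop.uniq} to \eqref{PSsyst3} as in the discussion preceding \eqref{interm14}; it carries the profile $f=f_{\xi_0}$ with interface at $\xi_0$. Since $\{U=0\}=\{X=0\}$ is invariant, an orbit with $U>0$ can only approach it as the independent variable of \eqref{PSsyst3} tends to $\pm\infty$; hence, running backward, $\gamma_{v_0}$ either stays in $\{U>0\}$ all the way down to $V=0$ (that is, $\xi\to0$ with $f>0$ on $(0,\xi_0)$), or it approaches a critical point with $U=0$ at a finite height $V=v_1\in(0,v_0)$. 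I will show that for $v_0$ large the latter occurs and, more precisely, that $\gamma_{v_0}$ runs backward to the critical point at infinity with $U=0$, $Y=+\infty$ and $V=v_1>0$; by the local analysis behind Lemma~\ref{lem.Q23} (this is the only $\{U=0\}$ point compatible with $Y\to+\infty$) this corresponds exactly to a transversal zero of $f^m$, namely $f(\xi_1)=0$, $(f^m)'(\xi_1)>0$, at the point $\xi_1\in(0,\xi_0)$ associated to $v_1$ through \eqref{interm14}, with $f>0$ on $(\xi_1,\xi_0)$, which is the asserted backward change of sign.

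The mechanism is the following. Linearizing \eqref{PSsyst3} at $P(v_0)$ gives eigenvalues $-(m+p-2)\beta/\alpha<0$, $\beta/\alpha>0$ and $0$; the orbit entering from the interior lies on the one-dimensional stable manifold, tangent to the eigenvector $\bigl(1,\ \alpha v_0/((m+p-1)\beta),\ 0\bigr)$, so for $v_0$ large it leaves $P(v_0)$ backward with $U$ of size $O(1/v_0)$ while $Y$ increases very fast. I would then enclose $\gamma_{v_0}$ in a $Y$-dependent trapping region on which $0<U\lesssim 1/v_0$, $Y>-\beta/\alpha$ and $V\le v_0$: in the part $\{Y>-\beta/(2\alpha)\}$ the term $-UV$ dominates the bounded quantity $Y^2+\tfrac{\beta}{\alpha}Y$ and the sublinear $U^{(m-1)/(m+p-2)}$ terms, so $\dot Y<0$ with $|\dot Y|$ bounded below by a multiple of $Uv_0$ (hence $Y$ strictly increasing backward), while for $Y>0$ one has $\dot U>0$ (hence $U$ strictly decreasing backward and staying of size $O(1/v_0)$); and since for $Y>1$ every term of $\dot Y$ is negative, $Y$ must run backward monotonically to $+\infty$, which, with $U\to0$ and $V$ finite, is the infinity point described above.

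It remains to see that $V$ does not drop to $0$ on the way. From $\dot V=\tfrac{\sigma(m-1)+2(p-1)}{m-1}U^{(m-1)/(m+p-2)}V$ and the lower bound on $|\dot Y|$, along the stretch $Y\in(-\beta/\alpha,Y_*)$ (with $Y_*>0$ fixed) one gets $|dV/dY|=|\dot V/\dot Y|\lesssim U^{(m-1)/(m+p-2)-1}\lesssim v_0^{(p-1)/(m+p-2)}$, while for $Y>Y_*$, using $dU/dY\sim-(m+p-2)U/Y$ so that $U$ decays like a fixed power of $Y$, one gets $|dV/dY|\lesssim U^{(m-1)/(m+p-2)}V/Y^2\lesssim v_0^{(p-1)/(m+p-2)}/Y^2$. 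Integrating over both ranges and recalling $(p-1)/(m+p-2)<0$, the total decrease of $V$ is $O\bigl(v_0^{(p-1)/(m+p-2)}\bigr)$, which tends to $0$ as $v_0\to\infty$; hence $v_1=\lim V\ge v_0-Cv_0^{(p-1)/(m+p-2)}>0$ for $v_0$ large. Undoing \eqref{PSvar1} and \eqref{PSvar3}, this is precisely the claim, $\xi_1$ and $v_0$ being linked by \eqref{interm14} through $v_1$.

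The main difficulty is the construction of the trapping region, and specifically the stretch where $Y$ climbs from $-\beta/\alpha$ up to moderate positive values: there both $\dot Y$ and $\dot U$ degenerate near $P(v_0)$ and the sublinear terms $U^{(m-1)/(m+p-2)}(1-Y)$ and $2U^{(m-1)/(m+p-2)}$ are not yet negligible, so the region (with a lower threshold for $U$ of order $1/v_0$) must be pinned down carefully enough to keep simultaneously $\dot Y<0$, $\dot U>0$ and $|\dot Y|\gtrsim Uv_0$. As an independent check one may instead argue by contradiction: assuming $f_{\xi_0}>0$ on $[0,\xi_0)$ and multiplying \eqref{SSODE} by $\xi$, integration by parts yields the identity $f^m(0)+\int_0^{\xi_0}\xi^{\sigma+1}f^p\,d\xi=(\alpha+2\beta)\int_0^{\xi_0}\xi f\,d\xi$; then the self-similar rescaling $\xi=\xi_0 s$, $f(\xi)=\xi_0^{2/(m-1)}g(s)$ turns \eqref{SSODE} into an equation carrying the large parameter $\mu=\xi_0^{\sigma-2(1-p)/(m-1)}$, and one checks that the weighted reaction integral overwhelms the right-hand side as $\mu\to\infty$, contradicting $f^m(0)\ge0$.
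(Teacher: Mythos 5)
Your setup matches the paper's up to the local computation at $P(v_0)$: the eigenvalues, the stable direction $H=\alpha v_0U/(\beta(m+p-1))$, and the identification of the target behavior with the node $Q_2$ (Lemma~\ref{lem.Q23}) are all correct. But the core of your main argument --- the backward-invariant trapping region along which simultaneously $\dot Y<0$, $U=O(1/v_0)$ and $|\dot Y|\gtrsim Uv_0$ while $Y$ climbs from $-\beta/\alpha$ to moderate positive values --- is not constructed, and you say so yourself. This is not a routine detail: on the stretch $-\beta/\alpha<Y<0$ the quadratic part $-Y^2-\frac{\beta}{\alpha}Y$ is \emph{positive} (up to $\beta^2/4\alpha^2$), so $\dot Y<0$ requires keeping $UV$ bounded below by a definite constant exactly where the linearization at $P(v_0)$ has ceased to be valid; the relation $Uv_0\sim H$ that makes the sign work near the critical point must be propagated by a genuinely nonlinear barrier (some surface of the type $UV=g(Y)$ together with $U\le C/v_0$), and all of your subsequent estimates (the bound $|dV/dY|\lesssim U^{(1-p)/(m+p-2)}$, the decay $U\sim Y^{-(m+p-2)}$ for large $Y$, and hence $v_1>0$) are conditional on it. As it stands, the proposal is a programme rather than a proof of the statement.

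The fallback argument also has a real gap. The identity $f^m(0)+\int_0^{\xi_0}\xi^{\sigma+1}f^p\,d\xi=(\alpha+2\beta)\int_0^{\xi_0}\xi f\,d\xi$ is correct, but since $p<1$ the rescaled reaction term $\mu\int_0^1 s^{\sigma+1}g^p\,ds$ does \emph{not} automatically overwhelm $(\alpha+2\beta)\int_0^1 sg\,ds$: large amplitude of $g$ weakens $g^p$ relative to $g$, so one needs an a priori sup bound of the type $f\lesssim\xi_0^{\sigma/(1-p)}$ on $(0,\xi_0)$ (available at interior maxima from \eqref{SSODE}, but not established in general, e.g.\ if the supremum sits at $\xi=0$); moreover a contradiction of this kind only excludes the scenario ``$f>0$ and finite down to $\xi=0$'' and does not by itself produce the precise backward change of sign (one must still rule out, say, backward blow-up of $f$ at some positive $\xi$). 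For comparison, the paper's proof avoids all quantitative work: it takes the unique orbit inside the invariant plane $\{X=0\}$ (system \eqref{interm15}) joining $Q_2$ to $P_1$, fixes a point $R(z_0)$ on it near $P_1$, and uses the very same linearization at $P(v_0)$ you wrote, evaluated at $\lambda=z_0/v_0$, to show that the orbit entering $P(v_0)$ passes through a point $Q(z_0)$ arbitrarily close to $R(z_0)$ when $v_0$ is large; since $Q_2$ is an unstable node, every trajectory meeting a small ball $B(R(z_0),\delta_0)$ emanates from $Q_2$, which gives the backward change of sign at some $\xi_1\in(0,\xi_0)$ without any trapping region or integral estimate. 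If you want to salvage your route, that continuity-plus-node argument is the missing ingredient replacing your Step on the trapping region.
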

\begin{proof}
First of all, we work in the invariant plane $\{X=0\}$ seen as a limiting case in variables $(X,Y,Z)$. The phase space associated to the system \eqref{PSsyst1} restricts to the following system
\begin{equation}\label{interm15}
\left\{\begin{array}{ll}\dot{Y}=-Y^2-\frac{\beta}{\alpha}Y-Z,\\
\dot{Z}=(m+p-2)YZ,\end{array}\right.
\end{equation}
which has been considered (with a difference only at the level of a constant) in \cite[Proposition 3.2]{IS19a} and \cite[Proposition 3.4, Step 1]{IS19b}. It is shown that in the plane $\{X=0\}$ there exists a unique orbit entering the critical point $(0,-\beta/\alpha)$ which is a saddle point for the system \eqref{interm15}, and this orbit comes from the unstable node $Q_2$ at infinity. Let $R(z_0)$ be a point on this unique orbit inside the plane $\{X=0\}$ and with component $Z=z_0>0$. By Lemma 3.1 and its proof we deduce that this unique orbit contained in $\{X=0\}$ enters $P_1$ tangent to the eigenvector $e_3=(0,1,(m+p-1)\beta/\alpha)$, thus if $z_0>0$ is taken to be sufficiently small, we readily get that
$$
R(z_0)=\left(0,-\frac{\beta}{\alpha}+\frac{z_0\alpha}{(m+p-1)\beta}+o(z_0),z_0\right)
$$
We consider small balls $B(R(z_0),\delta)$ centered at $R(z_0)$. The next step in the proof is to show that for any given radius $\delta>0$, there exists some $v(\delta)$ sufficiently large such that the unique orbit entering the critical point $P(v_0)=(0,-\beta/\alpha,v_0)$ for any $v_0>v(\delta)$ in the phase space associated to the system \eqref{PSsyst3} intersects $B(R(z_0),\delta)$. To this end, we fix $v_0>0$ and perform the change of variable in \eqref{PSsyst3}
$$
H=Y+\frac{\beta}{\alpha}, \qquad \overline{V}=V-v_0
$$
which maps $P(v_0)$ into the origin of a new system in variables $(U,H,\overline{V})$. The fact that $(m-1)/(m+p-2)>1$ and easy calculations give that the projection of the orbit entering $P(v_0)$ onto the plane $\{\overline{V}=0\}$ satisfies the following system in a neighborhood of $(U,H,\overline{V})=(0,0,0)$:
\begin{equation}\label{interm12}
\left\{\begin{array}{ll}\dot{U}=-\frac{\beta(m+p-2)}{\alpha}U+o(|(U,H)|),\\
\dot{H}=\frac{\beta}{\alpha}H-Uv_0+o(|(U,H)|),\end{array}\right.
\end{equation}
whose linearization has explicit trajectories obtained by an easy integration to get
$$
H=\frac{\alpha v_0}{\beta(m+p-1)}U, \ {\rm or \ equivalently} \ Y=-\frac{\beta}{\alpha}+\frac{\alpha v_0}{\beta(m+p-1)}U.
$$
The trajectories of the nonlinear system \eqref{interm12} are approximated by the linear ones above. It thus follows that in a neighborhood of $P(v_0)$ the points on the trajectory entering $P(v_0)$ have the form
$$
Q(\lambda)=\left(\lambda,-\frac{\beta}{\alpha}+\frac{\alpha v_0}{\beta(m+p-1)}\lambda+o(\lambda),v_0+o(\lambda)\right)
$$
for $\lambda>0$ sufficiently small. Coming back to the initial variables $(X,Y,Z)$ by undoing the change of variable \eqref{PSvar3}, the above points become
$$
Q(\lambda)=\left(\lambda^{(m+p-2)/(m-1)},-\frac{\beta}{\alpha}+\frac{\alpha v_0}{\beta(m+p-1)}\lambda+o(\lambda),\lambda v_0+o(\lambda^2)\right).
$$
Letting now $\lambda=z_0/v_0$ we get that the previous trajectory passes through points of the form
$$
Q(z_0)=\left(\left(\frac{z_0}{v_0}\right)^{(m+p-2)/(m-1)},-\frac{\beta}{\alpha}+\frac{\alpha z_0}{\beta(m+p-1)}+o(z_0),z_0+o(z_0^2)\right)
$$
and given $\delta>0$, there exists a sufficiently large $v(\delta)$ such that $Q(z_0)\in B(R(z_0),\delta)$ for any $v_0>v(\delta)$. We end the proof by a standard continuity argument showing, since $Q_2$ is an unstable node, that there exists $\delta_0>0$ sufficiently small such that all the trajectories intersecting the ball $B(R(z_0),\delta_0)$ come from $Q_2$, and in particular also come from $Q_2$ all the orbits entering $P(v_0)$ for $v_0>v(\delta_0)$. \end{proof}
The proof of Theorem \ref{th.exist} for profiles with interface of Type I is now standard and we will just give a sketch.
\begin{proof}[Proof of Theorem \ref{th.exist}]
Let $A\subseteq(0,\infty)$ be the set of points $\eta_0\in(0,\infty)$ such that the unique profile having an interface of Type I at $\xi=\eta_0$ (according to Proposition \ref{prop.uniq}) intersects the vertical axis with negative slope, that is, $f(0)=a>0$, $f'(0)<0$. It follows by a standard argument of continuity that $A$ is an open set which is nonempty according to Proposition \ref{prop.close}. Let then $\xi_0=\sup A$. Thus, $\xi_0\not\in A$ (since $A$ is open) and $\xi_0<\infty$, as it readily follows from Proposition \ref{prop.far}. It is then easy to check that the profile having an interface of Type I exactly at $\xi=\xi_0$ is a good profile with interface of Type I. We refer the reader to \cite[Section 3]{IS19a} for a detailed proof of this statement, which applies absolutely identically in the present case.
\end{proof}

\section{Blow-up profiles for $\sigma$ small}\label{sec.small}

This section is devoted to the \emph{proof of part (a) in Theorem \ref{th.small}}. Let us stress first that by $\sigma$ small we understand in this case $\sigma$ sufficiently close to its lower limit $2(1-p)/(m-1)$ and not to 0, as in \cite{IS19a}. We begin with the following
\begin{proposition}\label{prop.small}
There exists $\sigma_0>2(1-p)/(m-1)$ such that for any $\sigma\in(2(1-p)/(m-1),\sigma_0)$, all the orbits going out from the points $P_0$ and $P_2$ into the interior of the phase space associated to the system \eqref{PSsyst1} connect to the point $P_0$. Thus, all the profiles contained in these orbits are good blow-up profiles with interface of Type II.
\end{proposition}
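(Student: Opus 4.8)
The plan is to work in the phase space of the system \eqref{PSsyst3}, where the component $V$ is non-decreasing, and to identify the $\omega$-limit of every orbit leaving $P_0$ into the interior (the orbits of the elliptic sector exhibited in the proof of Theorem \ref{th.type2}) and of the unique orbit leaving $P_2$ (Lemma \ref{lem.P2}). All of them enter the open region $\{U>0,\,V>0\}$, where $\dot V>0$ strictly; hence no such orbit is recurrent and no periodic orbit lies there, so, by a standard argument based on the monotonicity of $V$ and on the local analysis at infinity in Section \ref{sec.local}, each of these orbits connects, as $\xi$ increases, to a single critical point (finite or at infinity). Going through Section \ref{sec.local}, the points $Q_1$, $Q_2$, $Q_5$ only emit orbits into the interior, $Q_4$ carries no profile (Lemma \ref{lem.Q4}), and $P_2$ can be reached only inside the invariant plane $\{Z=0\}$; therefore the $\omega$-limit is necessarily one of $P_0$ (interface of Type II), $P_1$ (interface of Type I, by Lemma \ref{lem.P1}) or $Q_3$ (a backward change of sign, by Lemma \ref{lem.Q23}). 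The whole content of the proposition is then to rule out the last two possibilities when $\sigma$ is close enough to its lower limit $2(1-p)/(m-1)$.

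The mechanism forcing this is a slow--fast structure of \eqref{PSsyst3} near that limit. Writing $\varepsilon:=\sigma-2(1-p)/(m-1)>0$, the third equation reads $\dot V=\varepsilon\,U^{(m-1)/(m+p-2)}V$, so $V$ is a slow variable and $(U,Y)$ are fast, while $\beta/\alpha\to(m-1)/2$ stays bounded as $\varepsilon\to0^{+}$ and $\alpha$ itself does not appear in the fast subsystem. In the frozen problem, both the line $\{U=0,\,Y=0\}$ — on which $P_0$ sits and which inherits the elliptic sector (its fast linearization there has one stable and one \emph{central} direction, the latter being exactly the source of the elliptic sector) — and the critical line $\{P(v_0):v_0\ge0\}=\{U=0,\,Y=-\beta/\alpha\}$, which collects the Type I interface behavior, are manifolds of equilibria, the second being of saddle type and staying at a uniform distance from the first. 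I would then show, by constructing an $\varepsilon$-uniform trapping region, that for $\varepsilon$ small every orbit leaving $P_0$ or $P_2$ is caught in a thin tube around $\{U=0,\,Y=0\}$ in which the slow drift can only increase $V$ by a bounded amount while the fast elliptic dynamics returns $U$ to $0$, so that the orbit loops back to $P_0$ (an interface of Type II) without ever reaching the saddle line $\{P(v_0)\}$ nor escaping towards $Q_3$, which is at infinity in these coordinates. The orbit out of $P_2$ is the mildest case, since $P_2$ itself collapses onto the line $\{U=0,\,Y=0\}$ as $\varepsilon\to0^{+}$.

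Once the connections to $P_0$ are established, the proof of Theorem \ref{th.type2} shows that the profiles carried by these orbits behave as in \eqref{beh.02} near $\xi=0$ (orbits out of $P_0$) or as in \eqref{Beh.P2} (orbit out of $P_2$) and develop an interface of Type II, which is the assertion. The main obstacle is the slow--fast step: the critical line $\{U=0,\,Y=0\}$ is not normally hyperbolic — it carries a central direction, precisely the degeneracy already met in the local analysis of $P_0$ — so one cannot quote geometric singular perturbation theory off the shelf, and instead has to combine the explicit integration of the quadratic part of the flow near $P_0$ (as in the proof of Theorem \ref{th.type2}) with barriers whose estimates are uniform in $\varepsilon$, and, crucially, handle the \emph{entire} one-parameter family of outgoing orbits at once rather than a single trajectory; securing that uniformity is the delicate point.
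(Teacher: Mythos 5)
Your reduction is sound and coincides with the paper's framework: you work in the variables \eqref{PSvar3} and the system \eqref{PSsyst3}, use the strict monotonicity of $V$ to exclude recurrence, and correctly narrow the possible $\omega$-limits of the orbits from $P_0$ and $P_2$ to the Type II line ($P_0$), the Type I line $\{U=0,\,Y=-\beta/\alpha\}$ and $Q_3$; you also correctly identify $\varepsilon=\sigma-2(1-p)/(m-1)$ as the small parameter, with $\dot V=\varepsilon\,U^{(m-1)/(m+p-2)}V$ and $\beta/\alpha\to(m-1)/2$, and observe that $P_2$ collapses to the origin as $\varepsilon\to0^+$. However, there is a genuine gap: the decisive step --- ruling out the Type I line and $Q_3$ for $\sigma$ close to the lower limit --- is only announced (``I would then show, by constructing an $\varepsilon$-uniform trapping region\dots''), and you yourself flag the two obstacles (non-normal hyperbolicity of the critical line, uniformity over the whole one-parameter family of outgoing orbits) as unresolved. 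That construction is the entire content of the proposition, so the proposal is a plan rather than a proof.

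For comparison, the paper resolves exactly this step with explicit, elementary barriers, and no singular-perturbation machinery is needed. First, the orbit out of $P_2$ and all orbits out of $P_0$ are confined to $\{U<U(P_2),\,Y<Y(P_2)\}$ by sign considerations on the planes $\{U=U(P_2)\}$ and $\{Y=Y(P_2)\}$. Second, the tilted plane $\{Y+kV=1\}$, with $1/k=\tfrac{(\sigma+1)(m-1)+2(p-1)}{m-1}U(P_2)^{(1-p)/(m+p-2)}$, is a barrier in $\{Y\geq0\}$, so at the first crossing of $\{Y=0\}$ one has $UV\leq U(P_2)/k$, a quantity that tends to $0$ as $\sigma\to2(1-p)/(m-1)$, while $k_1=\beta^2/4\alpha^2=(m-p)^2/4(\sigma+2)^2$ stays bounded away from zero; this inequality, together with the requirement $\sigma_0<2$ that makes the hyperbolic cylinder $\{UV=k_1\}$ impenetrable from inside in $\{Y\leq0\}$, is the \emph{only} place where the smallness of $\sigma-2(1-p)/(m-1)$ enters. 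Third, inside $\{UV\leq k_1\}$ the plane $\{Y=-\beta/2\alpha\}$ cannot be crossed, so the orbits are trapped in the strip $\{-\beta/2\alpha\leq Y\leq0\}$, which excludes both the Type I line (sitting at $Y=-\beta/\alpha$) and $Q_3$ (at $Y\to-\infty$); monotonicity of $U$ and $V$ in $\{Y\leq0\}$ and Lemma \ref{lem.Q4} then force convergence to $P_0$. Note also that the resulting invariant region is not a ``thin tube around $\{U=0,\,Y=0\}$'': the $Y$-coordinate descends to order $-\beta/2\alpha\approx-(m-1)/4$, which is not small; what is small is $U$ and, crucially, the product $UV$, not $|Y|$. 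So your slow--fast heuristic points at the right parameter but at a geometry different from the one that actually closes the argument, and the barrier estimates it calls for are precisely what is missing.
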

\begin{proof}
Although the proposition is stated in terms of the system in variables $(X,Y,Z)$, we prove it using once more the new variables $(U,Y,V)$ introduced in \eqref{PSvar3} and the autonomous system \eqref{PSsyst3}. Borrowing the plan of the proof from \cite[Proposition 4.1]{IS19a}, the general plan is to "trace" the unique orbit going out of $P_2$ (according to Lemma \ref{lem.P2}) by imposing suitable barriers for it. Let us notice first that in variables $(U,Y,V)$
$$
P_2=\left(\left(\frac{m-1}{2(m+1)\alpha}\right)^{(m+p-2)/(m-1)},\frac{1}{(m+1)\alpha},0\right)=(U(P_2),Y(P_2),0),
$$
in order to shorten the notation. We divide the proof into several steps.

\medskip

\noindent \textbf{Step 1.} On the one hand, the direction of the flow of the system \eqref{PSsyst3} on the plane $\{U=U(P_2)\}$ is given by the sign of the expression
$$
\frac{m+p-2}{m-1}U\left[(m-1)Y-\frac{m-1}{(m+1)\alpha}\right]=(m+p-2)U[Y-Y(P_2)],
$$
which is negative for $Y<Y(P_2)$. On the other hand, the direction of the flow of the system \eqref{PSsyst3} on the plane $\{Y=Y(P_2)\}$ is given by the sign of the expression
\begin{equation*}
\begin{split}
-Y(P_2)^2&-\frac{\beta}{\alpha}Y(P_2)+U^{(m-1)/(m+p-2)}(1-Y(P_2))-UV\\&<U^{(m-1)/(m+p-2)}(1-Y(P_2))-\frac{(m+1)\beta+1}{(m+1)^2\alpha^2},
\end{split}
\end{equation*}
and the latter is negative for $U<U(P_2)$. Since the connection going out of $P_2$ is tangent to the eigenvector $e_3$ in Lemma \ref{lem.P2} having negative $X$ and $Y$ component, it follows that this connection goes out from $P_2$ in the region $\{U<U(P_2), Y<Y(P_2)\}$ and it remains forever in this region according to the direction of the flow. Moreover, all the connections going out of $P_0$ enter the same region.

\medskip

\noindent \textbf{Step 2.} We next look for a constant $k>0$ such that the plane of equation $\{Y+kV=1\}$ be an upper barrier for the orbits from $P_2$ and $P_0$. The direction of the flow of the system \eqref{PSsyst3} over this plane is given by the sign of the expression
\begin{equation*}
\begin{split}
F(U,Y,V)&=-Y^2-\frac{\beta}{\alpha}Y+kU^{(m-1)/(m+p-2)}V\\&+k\frac{\sigma(m-1)+2(p-1)}{m-1}U^{(m-1)/(m+p-2)}V-UV\\
&=-Y^2-\frac{\beta}{\alpha}Y+UV\left[k\frac{(\sigma+1)(m-1)+2(p-1)}{m-1}U^{(1-p)/(m+p-2)}-1\right],
\end{split}
\end{equation*}
which, taking into account that along the orbits we are considering we have $U<U(P_2)$, is negative for $Y<0$ if we take for example $k$ such that
\begin{equation}\label{interm16}
\frac{1}{k}=\frac{(\sigma+1)(m-1)+2(p-1)}{m-1}U(P_2)^{(1-p)/(m+p-2)}.
\end{equation}
Thus, in the region $Y\geq0$ the orbits starting from $P_2$ and $P_0$ satisfy the bound $Y+kV\leq1$ with $k$ as in \eqref{interm16}. In particular, the orbits will intersect the plane $\{Y=0\}$ at a point whose coordinate $V$ fulfills $V\leq 1/k$. Thus, at this crossing point, we have
\begin{equation}\label{interm17}
UV\leq\frac{U(P_2)}{k}=\frac{[\sigma(m-1)+2(p-1)][(\sigma+1)(m-1)+2(p-1)]}{2(\sigma+2)(m+1)}\to0 \ {\rm as} \ \sigma\to\frac{2(1-p)}{m-1}.
\end{equation}
Letting
$$
k_1:=\frac{\beta^2}{4\alpha^2}=\frac{(m-p)^2}{4(\sigma+2)^2},
$$
we infer from \eqref{interm17} that there exists $\sigma_0$ sufficiently small (that we can take to also be smaller than 2) such that at the intersection point with the plane $\{Y=0\}$ the orbits satisfy
$$
UV<k_1, \qquad {\rm for \ any} \ \sigma\in\left(\frac{2(1-p)}{m-1},\sigma_0\right),
$$
thus they enter the half-space $\{Y<0\}$ in the region lying below the hyperbolic cylinder $\{UV=k_1\}$.

\medskip

\noindent \textbf{Step 3.} The direction of the flow on the plane $\{Y=0\}$ is given by the sign of the expression
$$
h(U,V)=U\left(U^{(1-p)/(m+p-2)}-V\right).
$$
Thus, the plane $\{Y=0\}$ can be crossed from right to left in the region where $h(U,V)<0$. By inspecting the equations for $\dot{U}$ and $\dot{V}$ in the system \eqref{PSsyst3} we deduce that $V$ is increasing, while $U$ is decreasing along the trajectories in the half-space $\{Y\leq0\}$. Thus after the first crossing, $h(U,V)$ remains always negative along the trajectories (as $U$ continues to decrease while $V$ continues to increase). This implies that the orbit will remain forever in the region $\{Y\leq0\}$.

\medskip

\noindent \textbf{Step 4.} We analyze now the direction of the flow of the system over the hyperbolic cylinder $\{UV=k_1\}$ obtained in Step 2. This is given by the sign of the expression
\begin{equation*}
\begin{split}
G(U,Y,V)&=\frac{m+p-2}{m-1}UV[(m-1)Y-2U^{(m-1)/(m+p-2)}]\\&+\frac{\sigma(m-1)+2(p-1)}{m-1}U^{(m-1)/(m+p-2)}UV\\
&=UV\left[(m+p-2)Y+(\sigma-2)U^{(m-1)/(m+p-2)}\right]<0,
\end{split}
\end{equation*}
in the region $\{Y\leq0\}$ and for $\sigma\in(2(1-p)/(m-1),\sigma_0)$, where $\sigma_0$ has been chosen such that $\sigma_0<2$ and as in Step 2. Thus, a connection entering the interior of $\{UV<k_1\}$ for such a $\sigma$, cannot go out from the hyperbolic cylinder.

\medskip

\noindent \textbf{Step 5.} Let us take now as barrier the plane $\{Y=-\beta/2\alpha\}$. The direction of the flow on this plane is given by the sign of
$$
\frac{\beta^2}{4\alpha^2}+\left(1+\frac{\beta}{2\alpha}\right)U^{(m-1)/(m+p-2)}>k_1-UV\geq0,
$$
and we infer that this plane cannot be crossed from right to left by any trajectory through the region $\{UV\leq k_1\}$.

\medskip

\noindent \textbf{Step 6. End of the proof.} Gathering all the previous steps, we notice that for any $\sigma\in(2(1-p)/(m-1),\sigma_0)$, the unique orbit going out of $P_2$ and \textbf{all} the orbits going out of $P_0$ stay forever in the region $\{U<U(P_2), Y<Y(P_2)\}$, and due to Steps 3 and 4, they cross the plane $\{Y=0\}$ in the region where $\{UV<k_1\}$ and thus remain forever in this region. Consequently, as shown in Steps 3 and 5 all these orbits will remain forever also in the strip $\{-\beta/2\alpha\leq Y\leq 0\}$. Since the coordinates $U$ and $V$ are monotonic in the region $\{Y\leq0\}$ along the trajectories, the orbits cannot end in a limit cycle and have to enter a critical point. We infer from the analysis done in Section \ref{sec.type2} and Lemma \ref{lem.Q4} that these orbits have to enter the critical point $P_0$ (in the way explained in Section \ref{sec.type2}) and contain good profiles with interface of Type II.
\end{proof}
The proof of Theorem \ref{th.small}, part (a) is now immediate. Indeed, Proposition \ref{prop.small} proves that there exists $\sigma_0>0$ such that for any $\sigma\in(2(1-p)/(m-1),\sigma_0)$, all the good profiles satisfying property (P2) in Definition \ref{def1} have an interface of Type II. On the other hand, Theorem \ref{th.exist} shows that for any such $\sigma$ there exists also at least a good profile with interface of Type I, and necessarily this good profile satisfies assumption (P1) in Definition \ref{def1}, that is, $f(0)=A>0$, $f'(0)=0$, as stated. We plot in Figure \ref{fig2} a numerical simulation of the behavior of the orbits going out of the critical points $P_2$ and $P_0$ for $\sigma$ sufficiently small (within the range of application of Theorem \ref{th.small}, part (a)).

\begin{figure}[ht!]
  % Requires \usepackage{graphicx}
  \begin{center}
  \includegraphics[width=11cm,height=7.5cm]{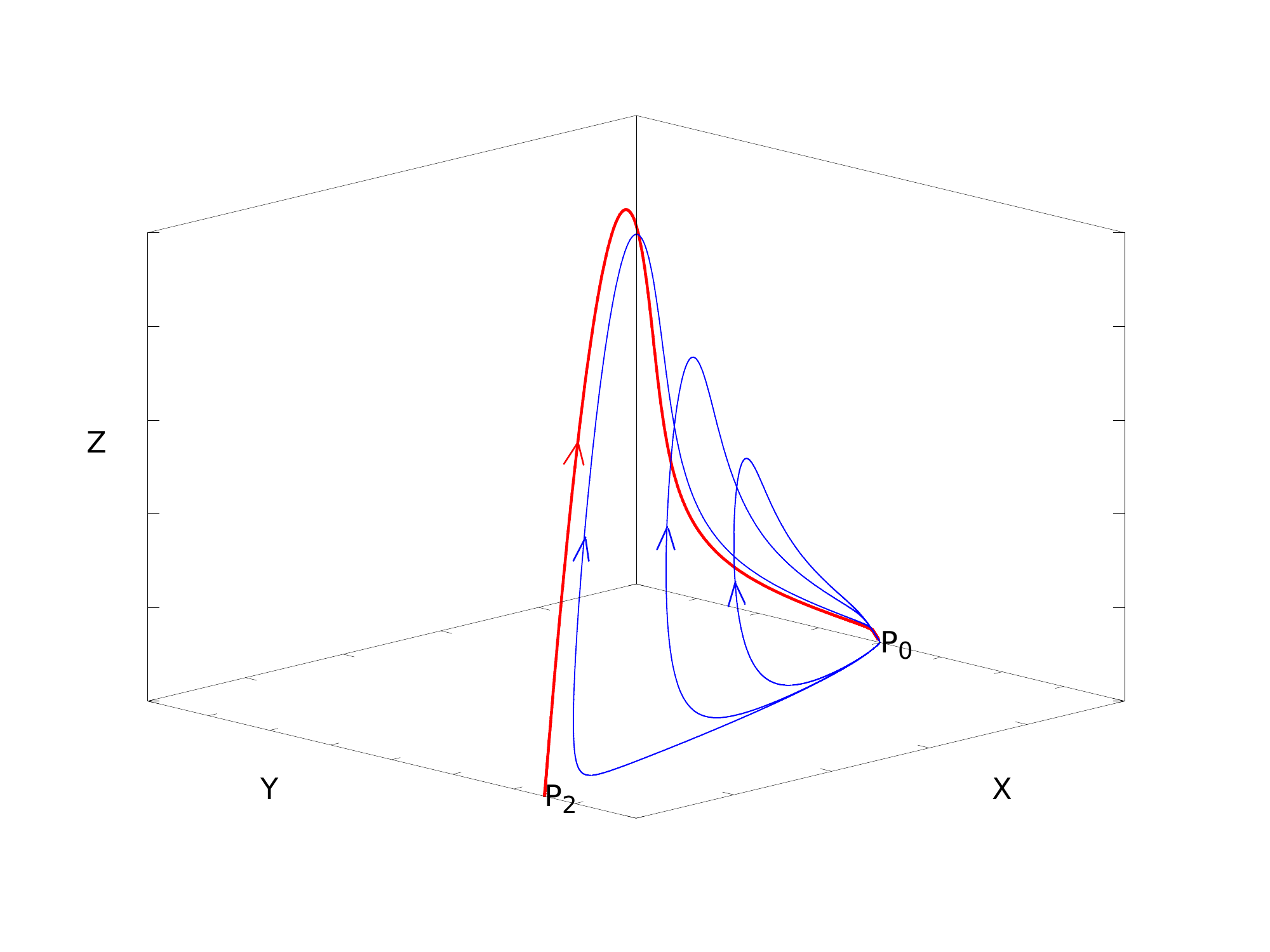}
  \end{center}
  \caption{Orbits going out of $P_2$ and $P_0$ for $\sigma$ small. Numerical experiment for $m=3$, $p=0.5$ and $\sigma=3$}\label{fig2}
\end{figure}

\section{Blow-up profiles with interface for $\sigma$ large}

This section is devoted to the proof of the remaining results for the range $m+p>2$, that is, part (b) in Theorem \ref{th.small} and Theorem \ref{th.large}. The core of the argument is to prove that for $\sigma$ sufficiently large, the connection going out of $P_2$ according to Lemma \ref{lem.P2} enters the critical point $Q_3$ in the phase space associated to the system \eqref{PSsyst1}. All these proofs are very similar to the ones in \cite[Section 5]{IS19b} and we will give a sketch of them or quote them directly if no differences appear. We start with the following technical result:
\begin{lemma}\label{lem.monot}
Let $\sigma\geq2$. Then the component $X$ is decreasing and the component $Y$ is also decreasing in the half-space $\{Y\geq0\}$ along the trajectory going out of the point $P_2$.
\end{lemma}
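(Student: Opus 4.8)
The plan is to argue entirely inside the quadratic system \eqref{PSsyst1}, with $\dot{}=d/d\eta$, by a ``first time of failure'' argument whose only structural ingredient that actually uses $\sigma\geq2$ is that $Z$ is non-decreasing along trajectories in $\{Y\geq0\}$.

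First I would collect the initial data for the trajectory going out of $P_2$. By Lemma \ref{lem.P2} the eigenvector $e_3$ associated with the positive eigenvalue $\lambda_3$ has negative $X$ and $Y$ components and positive $Z$ component, so this trajectory enters the region $\{0<X<X(P_2),\ 0<Y<Y(P_2),\ Z>0\}$ and, from the linearization at $P_2$, $\dot X<0$ and $\dot Y<0$ on an initial portion of it. Two elementary remarks will be used: (i) $Y(P_2)=\frac1{(m+1)\alpha}<1$, which is equivalent to $(m+1)(\sigma+2)>\sigma(m-1)+2(p-1)$, i.e. to $\sigma+m+2-p>0$, hence always true; (ii) the hypothesis $\sigma\geq2$ enters through
$$
\dot Z=Z\big[(m+p-2)Y+(\sigma-2)X\big]\geq0\qquad\text{on }\{Y\geq0\},
$$
since $m+p-2>0$, $Y\geq0$, $\sigma-2\geq0$ and $X,Z\geq0$; thus along the trajectory $Z$ is non-decreasing (and remains positive) as long as $Y\geq0$.

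For the main step I set $g:=(m-1)Y-2X$, so that $\dot X=Xg$ and, since $X>0$ on the trajectory, $\dot X<0$ if and only if $g<0$. I would argue by contradiction: suppose that, after leaving $P_2$, there is a first parameter value $\eta_1$ with $Y(\eta_1)\geq0$ at which $\dot X\geq0$ or $\dot Y\geq0$; then on the portion preceding $\eta_1$ one has $g<0$ and $\dot Y<0$, so $X(\eta_1)>0$, $Z(\eta_1)>0$, $Y(\eta_1)<Y(P_2)<1$, and by continuity $\dot Y(\eta_1)\leq0$ and $g(\eta_1)\leq0$. If $\dot X(\eta_1)=0$ then $g(\eta_1)=0$, and since $g\uparrow0$ as $\eta\uparrow\eta_1$ we get $\dot g(\eta_1)\geq0$; because $\dot g=(m-1)\dot Y-2Xg$ this forces $\dot Y(\eta_1)\geq0$. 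Hence in every case $\dot Y(\eta_1)=0$ and $g(\eta_1)\leq0$. Differentiating $\dot Y=-Y^2-\frac{\beta}{\alpha}Y+X(1-Y)-Z$ and using $\dot Y(\eta_1)=0$, every term carrying a factor $\dot Y$ disappears and one is left with
$$
\ddot Y(\eta_1)=\dot X(\eta_1)\,(1-Y(\eta_1))-\dot Z(\eta_1).
$$
Here $\dot X(\eta_1)=X(\eta_1)g(\eta_1)\leq0$, $1-Y(\eta_1)>0$, and $\dot Z(\eta_1)\geq0$, so $\ddot Y(\eta_1)\leq0$; equality would force $g(\eta_1)=0$ together with $\dot Z(\eta_1)=0$, the latter (as $Z(\eta_1)>0$, $m+p-2>0$, $\sigma-2\geq0$) forcing $Y(\eta_1)=0$ and hence $g(\eta_1)=-2X(\eta_1)<0$, a contradiction. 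Therefore $\ddot Y(\eta_1)<0$, so $\dot Y$ changes sign from positive to negative at $\eta_1$; in particular $\dot Y>0$ just before $\eta_1$, contradicting $\dot Y<0$ on the whole portion of the trajectory preceding $\eta_1$. Thus no such $\eta_1$ exists, i.e. $\dot X<0$ and $\dot Y<0$ along the trajectory for every $\eta$ with $Y(\eta)\geq0$, which is the assertion.

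The step I expect to be the real obstacle is the simultaneous control of the two monotonicities: $\dot X<0$ and $\dot Y<0$ are coupled (through $X$ appearing in $\dot Y$ and $Y$ in $\dot X$), so they cannot be handled independently, and the device above — take the first failure of \emph{either} property and show that at such a point necessarily $\dot Y=0$, $g\leq0$ and $\ddot Y<0$ — is what disentangles them. Subordinate but essential are the two easy facts that the mixed term $X(1-Y)$ in $\dot Y$ has a definite sign because $Y<Y(P_2)<1$, and that $\sigma\geq2$ is used exactly once, to guarantee $\dot Z\geq0$ on $\{Y\geq0\}$; without it the term $-\dot Z$ in $\ddot Y(\eta_1)$ could turn positive and the argument would break down — which is precisely why the regime of small $\sigma$ must be treated by the separate arguments of Section \ref{sec.small}.
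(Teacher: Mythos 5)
Your proof is correct and follows essentially the same route as the paper's: a first-failure contradiction argument in the half-space $\{Y\geq0\}$, using that $Y\leq Y(P_2)<1$ along the orbit, that $\sigma\geq2$ gives $\dot Z\geq0$ there, and the key identity $\ddot Y=\dot X(1-Y)-\dot Z$ at a point where $\dot Y=0$. The only (welcome) difference is organizational: you track the first failure of \emph{either} monotonicity at once and treat the borderline equality case explicitly, whereas the paper first derives the contradiction for $X$ (splitting into the cases $\eta_2=\eta_1$ and $\eta_2<\eta_1$) and then remarks that $Y$ is handled similarly.
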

\begin{proof}
It is obvious from the equation for $\dot{X}$ in the system \eqref{PSsyst1} that $X$ decreases along any trajectory in the region $\{Y<0\}$. Assume for contradiction that the coordinate $X$ is not decreasing along the orbit going of $P_2$ (necessarily this should happen for $Y\geq0$). Since both components $X$, $Y$ start in a decreasing way in a neighborhood of $P_2$, there exists a first point $\eta_1>0$ such that $\dot{X}(\eta_1)=0$, $X''(\eta_1)\geq0$. But
$$
0\leq X''(\eta_1)=(m-1)X(\eta_1)\dot{Y}(\eta_1),
$$
whence $\dot{Y}(\eta_1)\geq0$. Thus coordinate $Y$ had to change monotonicity already along the trajectory going out of $P_2$ at some first point $\eta_2\leq\eta_1$. That means $\dot{Y}(\eta_2)=0$ and $Y''(\eta_2)\geq0$. If $\eta_2=\eta_1$, since $\dot{X}(\eta_2)=\dot{Y}(\eta_2)$ we obtain after differentiating again the second equation in \eqref{PSsyst1} that
$$
Y''(\eta_2)=-\dot{Z}(\eta_2)=-Z(\eta_2)[(m+p-2)Y(\eta_2)+(\sigma-2)X(\eta_2)]<0
$$
and a contradiction. If $\eta_2\in(0,\eta_1)$, that means $\dot{X}(\eta_2)<0$ (since $\eta_1>\eta_2$ is the first point where $X$ ceases to be decreasing). Taking into account that along the orbit going out of $P_2$ we have $Y\leq Y(P_2)\leq1$ and that for $\sigma>2$ $Z$ is increasing in the region $\{Y\geq0\}$ we derive again that
$$
Y''(\eta_2)=\dot{X}(\eta_2)(1-Y(\eta_2))-\dot{Z}(\eta_2)<0,
$$
and a contradiction. Thus $X$ is decreasing and one can check in a similar way that the component $Y$ is also decreasing along the orbit from $P_2$.
\end{proof}
Coming back to the analysis of the invariant plane $\{Z=0\}$, we have the following preparatory result which has been already proved as \cite[Lemma 5.4]{IS19a} (to which we refer the interested reader).
\begin{lemma}\label{lem.Z0}
For any $\sigma>2(1-p)/(m-1)$ there exists an orbit connecting the critical points $P_0$ and $P_2$ and included in the invariant plane $\{Z=0\}$.
\end{lemma}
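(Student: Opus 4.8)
The plan is to work entirely inside the invariant plane $\{Z=0\}$, where the system \eqref{PSsyst1} reduces to the planar system
\begin{equation*}
\dot{X}=X[(m-1)Y-2X],\qquad \dot{Y}=-Y^2-\frac{\beta}{\alpha}Y+X(1-Y),
\end{equation*}
and to produce the desired connection as a trajectory $\gamma$ leaving $P_0$ whose $\omega$-limit set is forced, by a Poincar\'e--Bendixson argument, to be $\{P_2\}$. First, from the center manifold computation carried out near $P_0$ in Section \ref{sec.type2} (specialized to $Z=0$), the center manifold of $P_0$ inside $\{Z=0\}$ is one-dimensional, tangent to the direction $(\beta/\alpha,1)$, and on its branch with $X>0$ the reduced flow is $\dot{X}=\beta^{-1}X^2+O(X^3)>0$; hence there is an orbit $\gamma$ going out of $P_0$ into the region $\{X>0,\ Z=0\}$, along which $Y\sim(\alpha/\beta)X>0$ near $P_0$. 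On the other end, Lemma \ref{lem.P2} shows that the two-dimensional stable manifold of $P_2$ lies in $\{Z=0\}$, i.e. $P_2$ is an attracting node for the planar system, while $P_1$ has $Y=-\beta/\alpha<0$ and plays no role.

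Next I would trap $\gamma$ in a compact region of the first quadrant. Since $\{X=0\}$ is invariant, $\gamma$ stays in $\{X>0\}$; on $\{Y=0\}$ one has $\dot{Y}=X>0$, so $\gamma$ cannot leave $\{Y>0\}$; on $\{Y=1\}$ one has $\dot{Y}=-1-\beta/\alpha<0$, so $\gamma$ cannot leave $\{Y<1\}$; and at $X=(m-1)/2$ the first equation gives $\dot{X}=X[(m-1)Y-2X]<X[(m-1)-(m-1)]=0$ since $Y<1$, so $X$ stays bounded. Thus $\gamma$ remains for all forward time in the compact box $\overline{R}=\{0\le X\le(m-1)/2,\ 0\le Y\le1\}$, whose only equilibria are $P_2$ in the interior and $P_0$ on the boundary.

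Then I would invoke the Poincar\'e--Bendixson theorem on $\overline{R}$. Periodic orbits are excluded by the Dulac function $B(X,Y)=1/X$, for which a direct computation yields $\partial_X(BP)+\partial_Y(BQ)=-3-(2Y+\beta/\alpha)/X<0$ on $\{X>0,\ Y\ge0\}$. A homoclinic loop at $P_0$, or any heteroclinic cycle through $P_0$, is impossible: by the local structure at $P_0$ described above, no orbit with $X>0$ approaches $P_0$ as the independent variable tends to $+\infty$ inside $\{Z=0\}$ (the $X>0$ branch of the center manifold is purely outgoing, and nearby orbits are attracted to it and then flow outward). A homoclinic loop at $P_2$ is impossible because $P_2$ is a sink inside $\{Z=0\}$, and no limit set can sit partly in $\{X>0\}$ and partly on the invariant line $\{X=0\}$. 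Hence $\omega(\gamma)=\{P_2\}$, which is exactly the asserted connection; the argument is uniform in $\sigma$ since $\sigma$ enters only through $\beta/\alpha=(m-p)/(\sigma+2)>0$.

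A word on where the care lies: the only genuinely delicate steps are (i) extracting the correct outgoing orbit from the strongly non-hyperbolic point $P_0$, which is however already furnished by the center manifold analysis of Section \ref{sec.type2}, and (ii) ruling out that $\gamma$ accumulates on a boundary cycle or escapes toward a critical point at infinity. Both are handled by the explicit nullcline barriers above together with the Dulac criterion, so no new estimate is needed and the remainder is routine bookkeeping in the plane.
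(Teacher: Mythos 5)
Your proof is correct, and it is worth noting that the paper itself does not actually prove this lemma: it only cites \cite[Lemma 5.4]{IS19a}, exploiting the fact that in the invariant plane $\{Z=0\}$ the reaction term drops out, so the planar system (which depends on $\sigma$ only through $\beta/\alpha=(m-p)/(\sigma+2)$) is the same one already analyzed in the earlier work. Your argument is thus a self-contained alternative: the outgoing center-manifold branch at $P_0$ (consistent with \eqref{interm2} restricted to $Z=0$, which indeed gives $\dot X=X^2/\beta+O(X^3)$, and $1/\beta>0$ precisely because $\sigma>2(1-p)/(m-1)$), the sink structure of $P_2$ from Lemma \ref{lem.P2}, the explicit forward-invariant box $[0,(m-1)/2]\times[0,1]$ (which does contain $P_2$, since $(m+1)\alpha>1$ follows from $m>p$), the Dulac function $1/X$ (your divergence computation checks out), and Poincar\'e--Bendixson. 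What this buys is independence from the companion paper and an argument manifestly uniform in $\sigma$. Two places deserve a slightly fuller justification, both routine: (i) the assertion that no limit set can meet both $\{X>0\}$ and $\{X=0\}$ is not a general fact; here it should be argued that the only orbits entering $P_0$ with $X\geq0$ lie on the $Y$-axis, and their backward orbits leave the box (so by invariance they cannot belong to $\omega(\gamma)\subset\overline{R}$), while no orbit with $X>0$ can enter $P_0$ by the saddle-node structure, which rules out both $\omega(\gamma)=\{P_0\}$ and any graphic through $P_0$; (ii) on the right edge the inequality $\dot X<0$ degenerates at the corner $(X,Y)=((m-1)/2,1)$, where $\dot X=0$ but $\dot Y<0$, so the corner is still not an exit point.
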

We are now in a position to state the main technical result of this section
\begin{proposition}\label{prop.large}
There exists $\sigma_1>0$ sufficiently large such that for any $\sigma\in(\sigma_1,\infty)$, the unique orbit going out from $P_2$ in the phase space associated to the system \eqref{PSsyst1} enters the critical point $Q_3$. Moreover, for any $\sigma\in(\sigma_1,\infty)$ there are also orbits connecting from $P_0$ to $Q_3$.
\end{proposition}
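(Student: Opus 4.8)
\textbf{Proof proposal for Proposition \ref{prop.large}.}
The plan is to trace the unique orbit going out of $P_2$ (identified in Lemma \ref{lem.P2}) through the region $\{X\geq 0, Z\geq 0\}$ and to show that, for $\sigma$ large enough, it cannot connect to any of the admissible critical points other than $Q_3$. The key monotonicity input is Lemma \ref{lem.monot}: for $\sigma\geq 2$, both $X$ and $Y$ decrease along this orbit as long as $Y\geq 0$, so the orbit must cross the plane $\{Y=0\}$ and enter the half-space $\{Y<0\}$. Once there, the equation for $\dot X$ in \eqref{PSsyst1} forces $X$ to keep decreasing, and by Lemma \ref{lem.Z0} the limiting behaviour inside $\{Z=0\}$ is the connection $P_0$--$P_2$; the point is that for $\sigma$ large the true orbit (with $Z>0$) is pushed away from $\{Z=0\}$. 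Concretely, I would first establish that along the orbit $Z$ is \emph{increasing} once $Y<0$ is small enough relative to $\sigma X$: from $\dot Z = Z[(m+p-2)Y+(\sigma-2)X]$, since $X$ decreases toward its value at $Q_3$ or to $0$, one shows that for $\sigma$ large the factor $(\sigma-2)X$ dominates $(m+p-2)Y$ on the relevant part of the trajectory (using that $Y$ is bounded below on the orbit, which itself follows from the barriers in Step 5 of Proposition \ref{prop.small} or a direct argument bounding $Y$ from the $\dot Y$ equation). Hence $Z$ stays bounded away from $0$ and the orbit cannot approach $P_0$, $P_1$, $P_2$, all of which lie in $\{Z=0\}$.

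Next I would rule out the remaining finite and infinite critical points and limit cycles. The critical point $Q_4$ is excluded by Lemma \ref{lem.Q4}; $Q_1$ and $Q_5$ are sources and $Q_2$ is a source, so the orbit cannot enter them; and $P_1$ collects Type I interface behaviour which lies in $\{Z=0\}$, already excluded. This leaves only $Q_3$ (a stable node by Lemma \ref{lem.Q23}, with $Y\to-\infty$) as a possible $\omega$-limit. To close the argument I would confirm that no limit cycle can occur: in the region $\{Y\leq 0\}$ the component $X$ is strictly monotone (decreasing) along the orbit, which rules out periodic orbits, exactly as in the closing step of Proposition \ref{prop.small}. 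Finally, since $Q_3$ is characterised by $Z/X\to\infty$, $Z/Y\to\infty$ and $Y\to-\infty$, I would check that the orbit, having $Z$ bounded away from $0$, $X\to 0$ and $Y$ decreasing without bound, indeed escapes to infinity along the $Y$-axis direction of the Poincaré sphere — i.e. it genuinely reaches $Q_3$ rather than stalling. This is where I would invoke the explicit Poincaré-sphere system \eqref{systinf2} near $Q_3$ and the dominance $\dot Y\sim -Y^2$ to show the orbit enters the basin of $Q_3$; this mirrors \cite[Section 5]{IS19b}.

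For the second assertion, that orbits from $P_0$ also connect to $Q_3$, I would argue by continuity/openness: the orbits leaving $P_0$ form a one-parameter family (along the center manifold, by the elliptic-sector analysis in Section \ref{sec.type2}), and for $\sigma$ large the same barrier/monotonicity analysis applies verbatim to show that those of them which cross $\{Y=0\}$ (rather than returning to $P_0$ along the elliptic sector) are trapped in the region where $Z$ grows and $X\to 0$, hence must also terminate at $Q_3$. One should note that some orbits from $P_0$ still return to $P_0$ (the elliptic loops producing Type II interfaces), so the statement is about the existence of at least one $P_0$--$Q_3$ connection, which follows once the $P_2$--$Q_3$ connection is established and one lets $\sigma$ be large enough that the boundary orbit of the elliptic sector is forced across $\{Y=0\}$.

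The main obstacle I anticipate is controlling the sign of $\dot Z = Z[(m+p-2)Y+(\sigma-2)X]$ uniformly along the orbit in the half-space $\{Y<0\}$: a priori $Y$ can be very negative while $X$ is tiny, so the crude bound "$(\sigma-2)X$ dominates" need not hold everywhere. The resolution should be to bound $Y$ from below on the trajectory — e.g. using a plane-barrier of the form $\{Y+kZ = c\}$ analogous to Step 2 of Proposition \ref{prop.small}, or exploiting that $\dot Y = -Y^2 - (\beta/\alpha)Y + X(1-Y) - Z$ keeps $Y$ above a $\sigma$-dependent threshold until $X$ has already become exponentially small — and then to partition the trajectory into a first arc (where $X$ is not yet small and $(\sigma-2)X$ wins outright for $\sigma$ large) and a final arc (where $Y$ is already so negative that the orbit is committed to the basin of $Q_3$ regardless of the sign of $\dot Z$). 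Making this two-regime split quantitative, with all constants tracked as functions of $\sigma$, is the technical heart of the proof, and it is exactly the place where \cite[Section 5]{IS19b} can be followed closely.
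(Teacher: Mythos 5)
There is a genuine gap at the heart of your argument. Your whole strategy rests on showing that, for $\sigma$ large, the component $Z$ stays bounded away from $0$ along the orbit from $P_2$ once it is in $\{Y<0\}$, so that the orbit cannot approach $P_0$, $P_1$, $P_2$. But the only mechanism you offer is the sign of $\dot Z=Z[(m+p-2)Y+(\sigma-2)X]$, and, as you yourself admit, this sign is not controlled when $X$ has already become small while $Y$ is only moderately negative; the proposed fix (a lower barrier for $Y$ plus a two-regime split with constants tracked in $\sigma$) is exactly the technical core of the proof and is never carried out. Note that the step cannot be soft: by Theorem \ref{th.small}(b) there are intermediate values of $\sigma$ for which the orbit from $P_2$ does enter $P_1$ (hence $Z\to0$ along it), so any correct argument must quantitatively exploit largeness of $\sigma$ precisely in the region where your estimate breaks down. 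As written, the proposal establishes that the orbit crosses $\{Y=0\}$ (via Lemma \ref{lem.monot}) and correctly lists the candidate $\omega$-limits, but the decisive exclusion of $P_0$ and $P_1$ for large $\sigma$ is missing; the limit-cycle exclusion is also incomplete, since you have not shown the orbit stays in a region where $X$ (or another coordinate) is monotone, only that it is monotone while the orbit remains in $\{Y\le0\}$.

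The paper resolves this differently: it passes to the variables $(\overline X,\overline Y,\overline Z)$ with $\overline Z=Z/X=\xi^\sigma f^{p-1}$, i.e.\ the system \eqref{PSsyst4} used in \cite{IS19b} (which could not be used globally here because for $p<1$ it sends the interface behaviour to infinity, but is legitimate along this particular orbit), checks that the hypotheses of \cite[Proposition~5.6]{IS19b} nowhere use $p>1$ (only $m>1$, $m>p$, $\sigma(m-1)+2(p-1)>0$, and that the orbit from $P_2$ starts above the two explicit planes \eqref{plane1} and \eqref{plane2}), and then imports the long barrier computation there showing that for $\sigma$ large the orbit must cross the critical plane $\overline Y=-\overline Y_0$ with $\overline Y_0=(m-1)(\sigma+2)/(2m[\sigma(m-1)+2(p-1)])$, after which it cannot return and is forced into $Q_3$. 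That $\sigma$-dependent barrier construction is precisely the quantitative ingredient your sketch postpones. Your treatment of the second assertion (orbits from $P_0$ to $Q_3$) is in the right spirit, but the paper's version is cleaner: it uses the $P_0$--$P_2$ connection inside $\{Z=0\}$ from Lemma \ref{lem.Z0} together with the fact that $Q_3$ is a stable node, so orbits leaving $P_0$ that pass near $P_2$ are captured by $Q_3$ by continuity, with no need to discuss which orbits of the elliptic sector cross $\{Y=0\}$.
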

\begin{proof}
The system \eqref{PSsyst1} is topologically equivalent to the system
\begin{equation}\label{PSsyst4}
\left\{\begin{array}{ll}\dot{\overline{X}}=m\overline{X}[(m-1)\overline{Y}-2\overline{X}],\\
\dot{\overline{Y}}=-m\overline{Y}^2-\beta\overline{Y}+\alpha\overline{X}-m\overline{X}\overline{Y}-\overline{X}\overline{Z},\\
\dot{\overline{Z}}=m\overline{Z}[(p-1)\overline{Y}+\sigma\overline{X}].\end{array}\right.
\end{equation}
obtained for the variables (notice that modulo some constants, $\overline{Z}=Z/X$)
\begin{equation}\label{PSchange4}
\overline{X}(\eta)=\xi^{-2}f^{m-1}(\xi), \ \
\overline{Y}(\eta)=\xi^{-1}f^{m-2}(\xi)f'(\xi), \ \
\overline{Z}(\eta)=\xi^{\sigma}f^{p-1}(\xi),
\end{equation}
and which was used all along the paper \cite{IS19b}. In our case, we could not use this system from the beginning as some critical points become points at infinity since $p<1$. But we can use this system in the current proof, which is now perfectly identical to the proof of \cite[Proposition 5.6]{IS19b}. Let us notice that Lemma \ref{lem.monot} is independent of the change of variable $\overline{Z}=Z/X$, thus it also applies to the system \eqref{PSsyst4}. A careful inspection of the proof of \cite[Proposition 5.6]{IS19b} (and its previous technical result \cite[Lemma 5.5]{IS19b}) which uses Lemma \ref{lem.monot} as an important technical tool, shows that the fact that $p>1$ it is nowhere used along the proof, thus it can be extended to our case. Indeed, the only elements used in an essential way in the proof are the facts that $m>1$, $m>p$, $\sigma(m-1)+2(p-1)>0$, and the fact that, considering the planes
\begin{equation}\label{plane1}
\overline{Z}=E-D\overline{Y}, \quad D=\frac{2m(m+1)^2}{m-1}, \ E=\frac{2(m+1)}{m-1},
\end{equation}
respectively
\begin{equation}\label{plane2}
\overline{X}=B\overline{Y}+C, \quad B=\frac{m(m-1)}{2m^2+5m+1}, \ C=\frac{(2m+1)(m-1)}{2m(2m^2+5m+1)}
\end{equation}
the orbit going out of $P_2$ starts in the region where simultaneously $\overline{Z}>E-D\overline{Y}$ and $\overline{X}>B\overline{Y}+C$. These two facts are also true in our range of parameters, since $\overline{X}$ and $\overline{Y}$ are exactly the same (they do not depend on $p$) and $\overline{Z}$ passes to be at infinity at the starting point of the orbit for $p<1$, thus $\overline{Z}>E-D\overline{Y}$ holds true in a trivial way at the beginning of the orbit. We refer the reader then to the (completely detailed) proof of \cite[Proposition 5.6]{IS19b} for the rather tedious and long calculations showing that the orbit starting from $P_2$ has to cross a critical plane
$$
\overline{Y}=-\overline{Y}_0, \qquad \overline{Y}_0=\frac{(m-1)(\sigma+2)}{2m[\sigma(m-1)+2(p-1)]},
$$
after which it can no longer return. Going back to our initial system \eqref{PSsyst1} and translating the result, we conclude that the connection from $P_2$ will connect to the critical point $Q_3$ for $\sigma$ sufficiently large. Using Lemma \ref{lem.Z0} and standard continuity arguments it follows that for any $\sigma$ large when the orbit from $P_2$ enters $Q_3$, there are also orbits going out of $P_0$ and connecting $Q_3$. The details of this last argument are given in \cite[Proposition 5.6, Step 4]{IS19b} or \cite[pp. 2091-2092]{IS19a}.
\end{proof}

%\begin{lemma}\label{lem.P0}
%The point $P_0$ behaves like a stable node (attractor) for the orbits in the half-space $\{Y<0\}$.
%\end{lemma}
%\begin{proof}
%We want to show that there is a "left-neighborhood" of $P_0$ such that all trajectories entering that neighborhood have to enter $P_0$. Let then $x_0>0$ sufficiently small, $y_0\in(-\beta/2\alpha,0)$ and consider the region (in variables $(X,Y,Z)$ from \eqref{PSsyst1})
%$$
%D=\left\{(X,Y,Z):0\leq X<x_0, \ -y_0<Y<0, \ 0\leq Z<-y_0^2-\frac{\beta}{\alpha}y_0\right\}.
%$$
%Let us suppose that we are on a trajectory in the phase space that entered $D$. Then using the equations in the system \eqref{PSsyst1} we have $\dot{X}<0$, thus $X$ decreases while $Y<0$ and thus $X<x_0$ after the current point. With respect to $Z$, we split $D$ into two regions with respect to the sign of $(m+p-2)Y+(\sigma-2)X$. If $(m+p-2)Y+(\sigma-2)X<0$ then $\dot{Z}<0$ and thus $Z$ decreases. If we are in the region of $D$ where $(m+p-2)Y+(\sigma-2)X>0$, we consider the surface $-Y^2-(\beta/\alpha)Y-Z=0$. The flow of the system over it is given by the sign of the expression
%$$
%X(1-Y)\left(-2Y-\frac{\beta}{\alpha}\right)-Z[(m+p-2)Y+(\sigma-2)X]<0,
%$$
%thus since our point is in the region
%\end{proof}
%
With all the previous technical steps, we are in a position to prove part (b) in Theorem \ref{th.small}.
\begin{proof}[Proof of Theorem \ref{th.small}, part (b)] We use the "three-sets argument". Let us consider then the sets
\begin{equation*}
\begin{split}
&A:=\{\sigma>0: {\rm the \ orbit \ from \ }P_2 \ {\rm enters} \ P_0\},\\
&B:=\{\sigma>0: {\rm the \ orbit \ from \ }P_2 \ {\rm enters} \ P_1\},\\
&C:=\{\sigma>0: {\rm the \ orbit \ from \ }P_2 \ {\rm enters} \ Q_3\}.
\end{split}
\end{equation*}
The set $C$ is open since $Q_3$ is an attractor. The same argument cannot be used directly for the point $P_0$, as it is not an attractor by itself. But we get from the analysis in Section \ref{sec.type2} and the classification in \cite{Date79} that there exists a sufficiently small neighborhood $B(P_0,\delta)$ of $P_0$ such that any trajectory of the system entering $B(P_0,\delta)$ either comes out from or enters $P_0$. Since all the connections going out of $P_0$ enter the half-space $\{Y>0\}$, it follows that $P_0$ behaves exactly like an attractor in the half-ball $B(P_0,\delta)\cap\{Y<0\}$, that is, any trajectory entering the half-ball enters $P_0$ afterwards. Since the orbits going out of $P_2$ can only enter $P_0$ after crossing the plane $\{Y=0\}$ (which in terms of profiles means arriving to a maximum point and then starting to decrease towards the interface), these orbits can only enter $P_0$ from the negative side $\{Y<0\}$, thus the same argument as for an attractor shows that $A$ is an open set. Since both $A$ and $C$ are nonempty, as insured by Propositions \ref{prop.small}, respectively \ref{prop.large}, we infer that the set $B$ is also nonempty (and closed), thus there exists at least one $\sigma_*\in(2(1-p)/(m-1),\infty)$ such that $P_2$ connects to $P_1$ for $\sigma=\sigma_*$ (containing thus a profile with interface of Type I).
\end{proof}
The proof of the remaining Theorem \ref{th.large} is similar as the previous "three-sets argument", since the same argument stays true also for the orbits going out of $P_0$ itself: if they enter $P_0$ forming the elliptic sector as shown in Section \ref{sec.type2}, they do that also through the half-space $\{Y<0\}$. We omit the details which are easy and similar to the proof of \cite[Lemma 5.5]{IS19a}. We plot in Figure \ref{fig3} the outcome of numerical experiments on the behavior of the orbits going out of the critical points $P_2$ and $P_0$ in the critical case (as in Theorem \ref{th.small}, part (b)) and for $\sigma$ large (according to Theorem \ref{th.large}).

\begin{figure}[ht!]
  % Requires \usepackage{graphicx}
  \begin{center}
  \subfigure[Critical $\sigma^*$]{\includegraphics[width=7.5cm,height=6cm]{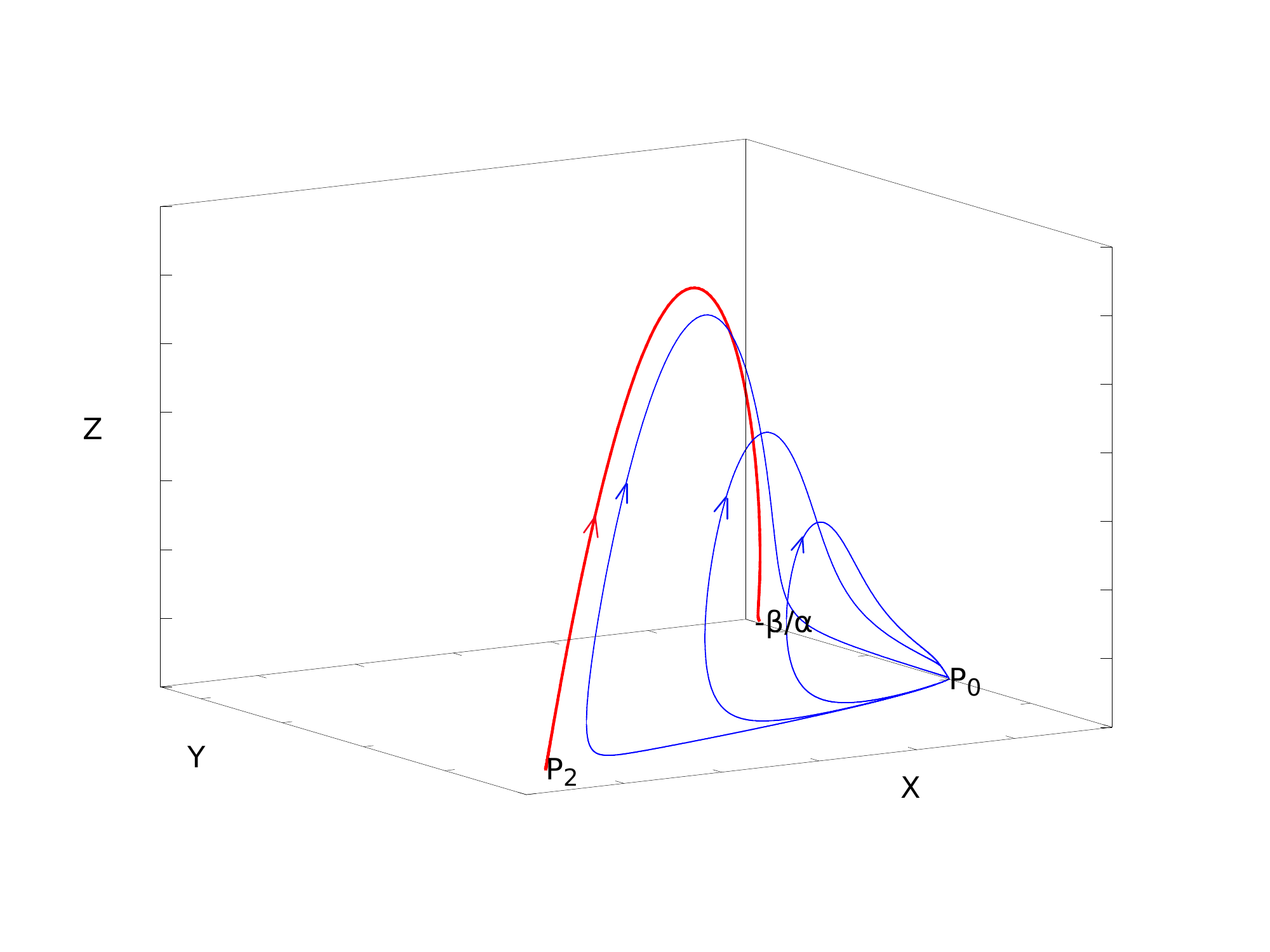}}
  \subfigure[$\sigma$ large]{\includegraphics[width=7.5cm,height=6cm]{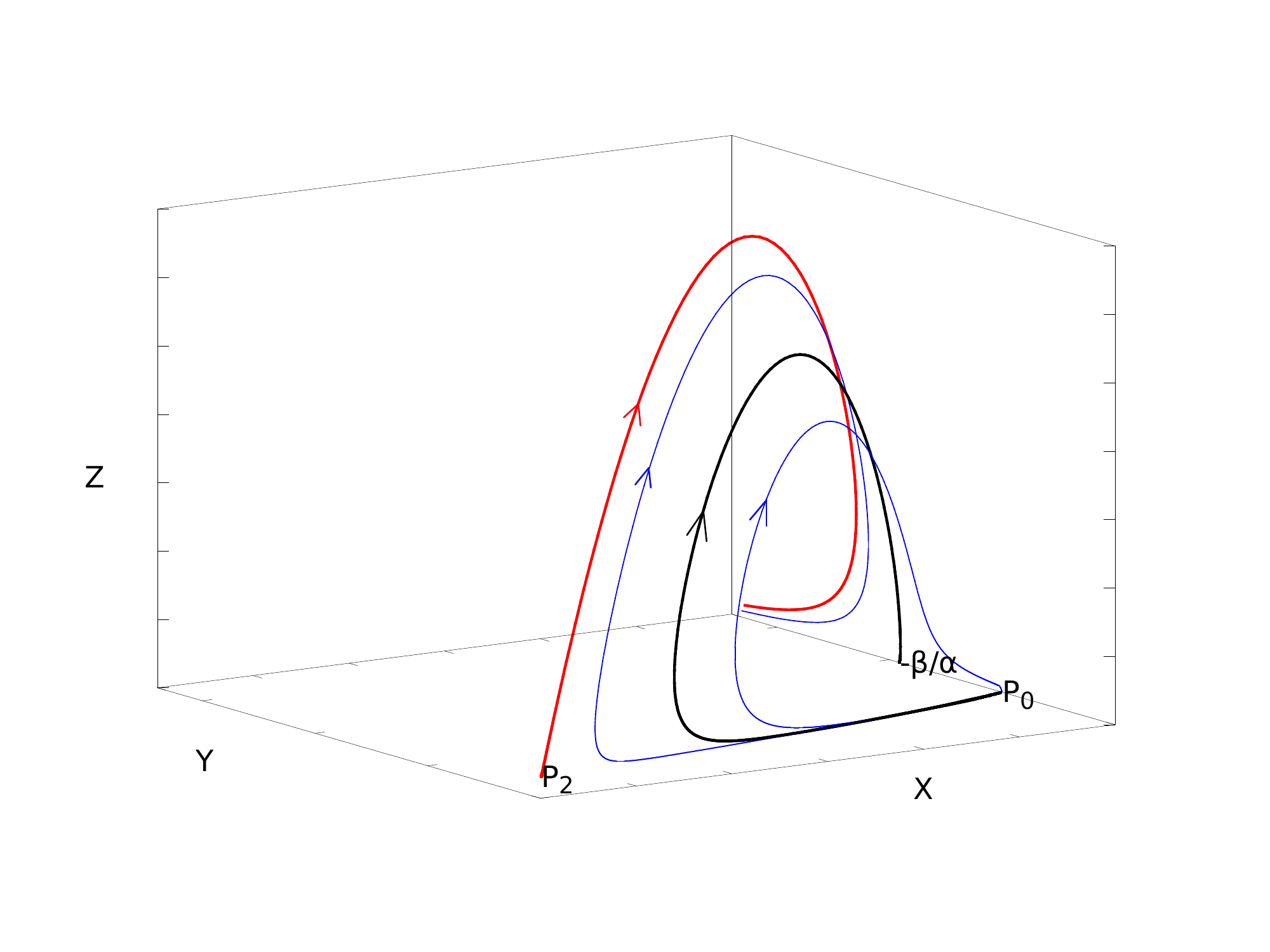}}
  \end{center}
  \caption{Orbits from $P_2$ and $P_0$ for different values of $\sigma$. Numerical experiment for $m=3$, $p=0.5$ and $\sigma=3.233$, respectively $\sigma=3.5$}\label{fig3}
\end{figure}

\section{Non-existence when $m+p<2$}\label{sec.non}

This section is devoted to the case $m+p<2$ and the proof of Theorem \ref{th.non}. Let us notice first, at a formal level, that the previous study gives us the idea that good blow-up profiles with interface do not exist. As we know already, in the phase space associated to the system \eqref{PSsyst1} for $m+p>2$ the critical point encoding the interface behavior are $P_0$ and $P_1$. An inspection of the analysis in Section \ref{sec.type2} for $P_0$ and in Lemma \ref{lem.P1} for the point $P_1$ gives that, if the expression $m+p-2$ changes sign, big differences occur. Indeed, recalling the invariants $D$, $K_2$ and $K_3$ in the analysis in \cite{Date79}, we notice that when $m+p-2<0$ we are in the case $D<0$, $K_2>0$, $K_3<0$ which corresponds to the phase portrait number 3 in \cite[Figure 8, p. 329]{Date79}, showing that there are no longer orbits entering $P_0$. On the other hand, the analysis in Lemma \ref{lem.P1} changes as the third eigenvalue $\lambda_3=-(m+p-2)\beta/\alpha$ becomes positive, thus by inspecting the eigenvectors for $P_1$ we obtain that there are no profiles either entering or going out of $P_1$. Indeed, keeping the analysis in Lemma \ref{lem.P1}, the linearization near the point would have a two-dimensional unstable manifold generated by the eigenvectors
$$
e_2=(0,1,0), \ e_3=\left(0,1,\frac{(m+p-1)\beta}{\alpha}\right),
$$
and included completely in the invariant plane $\{X=0\}$ and a one-dimensional stable manifold contained in the invariant plane $\{Z=0\}$, none of them containing solutions to \eqref{SSODE}. 

All the above are of course formal considerations, as these analysis do not remain valid when $m+p<2$ since in this case $Z(\xi)=\infty$ and thus the critical points $P_0$ and $P_1$ do not exist anymore in the same form as we studied them. But still, these formal arguments give us an understanding of why interfaces disappear when $m+p<2$. To make it rigorous, it is sufficient to introduce a phase space for a system where the critical points can be analyzed one by one and show that no interface behavior may exist. Unfortunately, the system we used \eqref{PSsyst1} is not good for this aim since the points $P_0$, $P_1$ and $P_2$ will all unify with the point $Q_4$ at infinity making the analysis very difficult. We thus have to introduce a new quadratic autonomous system where the component $Z$ behaves well. We are led to the following change of variable:
\begin{equation}\label{PSvar5}
\begin{split}
&X(\eta)=\sqrt{m}\xi^{-(\sigma+2)/2}f^{(m-p)/2}(\xi), \ Y(\eta)=\sqrt{m}\xi^{-\sigma/2}f^{(m-p-2)/2}(\xi)f'(\xi), \\ &Z(\eta)=\frac{\alpha}{\sqrt{m}}\xi^{(2-\sigma)/2}f^{(2-m-p)/2}(\xi),
\end{split}
\end{equation}
where we recall that $\alpha$ is defined in \eqref{SSexp} and the new independent variable $\eta=\eta(\xi)$ is defined through the differential equation
$$
\frac{d}{d\eta}=\sqrt{m}\xi^{-\sigma/2}f^{m-p}(\xi)\frac{d}{d\xi}.
$$
The differential equation \eqref{SSODE} transforms into the system
\begin{equation}\label{PSsyst5}
\left\{\begin{array}{ll}\dot{X}=X\left[\frac{m-p}{2}Y-\frac{\sigma+2}{2}X\right],\\
\dot{Y}=-\frac{m+p}{2}Y^2-\frac{\sigma}{2}XY+XZ-\frac{\beta}{\alpha}ZY-1,\\
\dot{Z}=Z\left[\frac{2-m-p}{2}Y-\frac{2-\sigma}{2}X\right].\end{array}\right.
\end{equation}
and it is easy to see that \eqref{PSsyst5} does not have finite critical points, thus all its critical points are at infinity. The most important favorable thing related to the system \eqref{PSsyst5} is that, since for $m+p<2$ we have
$$
\sigma>\frac{2(1-p)}{m-1}>2
$$
and the definition of $Z$ in \eqref{PSvar5}, any \textbf{interface behavior} or even tail behavior as $\xi\to\infty$ has to be seen in a critical point at infinity with \textbf{the component $Z=0$}. It is thus sufficient to study the critical points at infinity for the system \eqref{PSsyst5} to show that such behavior is impossible and prove Theorem \ref{th.non}. We will be rather brief below, skipping some technical details as the analysis is very similar to the one performed in Section \ref{sec.local}.

\medskip

\noindent \begin{proof}[Proof of Theorem \ref{th.non}] We pass to the Poincar\'e hypersphere following \cite[Section 3.10]{Pe}. We introduce new variables
$(\overline{X},\overline{Y},\overline{Z},W)$ by letting:
$$
X=\frac{\overline{X}}{W}, \ \ Y=\frac{\overline{Y}}{W}, \ \ Z=\frac{\overline{Z}}{W}
$$
and according to \cite[Theorem 4, Section 3.10]{Pe}, the critical points at infinity in the phase space associated to the system \eqref{PSsyst5} lie on the Poincar\'e hypersphere at points $(\overline{X},\overline{Y},\overline{Z},0)$ where $\overline{X}^2+\overline{Y}^2+\overline{Z}^2=1$ and they solve the system
\begin{equation}\label{Poincare11}
\left\{\begin{array}{ll}\overline{X}Q_2(\overline{X},\overline{Y},\overline{Z})-\overline{Y}P_2(\overline{X},\overline{Y},\overline{Z})=0,\\
\overline{X}R_2(\overline{X},\overline{Y},\overline{Z})-\overline{Z}P_2(\overline{X},\overline{Y},\overline{Z})=0,\\
\overline{Y}R_2(\overline{X},\overline{Y},\overline{Z})-\overline{Z}Q_2(\overline{X},\overline{Y},\overline{Z})=0,\end{array}\right.
\end{equation}
where $P_2$, $Q_2$ and $R_2$ are the homogeneous second degree parts of the polynomials in the right hand side of the system \eqref{PSsyst5}, that is
\begin{equation*}
\begin{split}
&P_2(\overline{X},\overline{Y},\overline{Z})=\overline{X}\left[\frac{m-p}{2}\overline{Y}-\frac{\sigma+2}{2}\overline{X}\right],\\
&Q_2(\overline{X},\overline{Y},\overline{Z})=-\frac{m+p}{2}\overline{Y}^2-\frac{\sigma}{2}\overline{X}\overline{Y}+\overline{X}\overline{Z}-\frac{\beta}{\alpha}\overline{Y}\overline{Z},\\
&R_2(\overline{X},\overline{Y},\overline{Z})=\overline{Z}\left[\frac{2-m-p}{2}\overline{Y}+\frac{2-\sigma}{2}\overline{X}\right].
\end{split}
\end{equation*}
The system \eqref{Poincare11} becomes
\begin{equation}\label{Poincare12}
\left\{\begin{array}{ll}\overline{X}\left(-m\overline{Y}^2+\overline{X}\overline{Y}+\overline{X}\overline{Z}-\frac{\beta}{\alpha}\overline{Y}\overline{Z}\right)=0,\\
\overline{X}\overline{Z}(2\overline{X}-(m-1)\overline{Y})=0,\\
\overline{Z}\left(\overline{Y}^2+\overline{X}\overline{Y}-\overline{X}\overline{Z}+\frac{\beta}{\alpha}\overline{Y}\overline{Z}\right)=0,\end{array}\right.
\end{equation}
Straightforward calculations give that the system \eqref{Poincare12} has seven critical points and each one of them has a direct correspondence to the critical points studied in Section \ref{sec.local}. We give (in a rather sketchy way) their analysis one by one below.

\medskip

$\bullet$ The critical point $(1,0,0,0)$ in the Poincar\'e hypersphere is topologically equivalent, according to part (a) of \cite[Theorem 5, Section 3.10]{Pe}, to the origin in the system:
\begin{equation}\label{systinf11}
\left\{\begin{array}{ll}-\dot{y}=-y-z+w^2+my^2+\frac{\beta}{\alpha}yz,\\
-\dot{z}=-2z+(m-1)yz,\\
-\dot{w}=-\frac{\sigma+2}{2}w+\frac{m-p}{2}yw,\end{array}\right.
\end{equation}
where the minus sign has been chosen according to the direction of the flow in the original system \eqref{PSsyst5}. It readily follows that this point is an unstable node containing profiles such that
$$
(f(\xi)^{(m-p)/(\sigma+2)})'\sim K \qquad {\rm as} \ \xi\to0,
$$
thus gathering in it the profiles corresponding to the old points $Q_1$ and $P_0$ in the system \eqref{PSsyst1}.

$\bullet$ The critical points $(0,\pm1,0,0)$ in the Poincar\'e hypersphere are topologically equivalent, according to part (b) of \cite[Theorem 5, Section 3.10]{Pe}, to the origin in the system:
\begin{equation}\label{systinf21}
\left\{\begin{array}{ll}\pm\dot{x}=-mx-\frac{\beta}{\alpha}xz+x^2-xw^2+x^2z,\\
\pm\dot{z}=-z-xz-\frac{\beta}{\alpha}z^2-zw^2+xz^2,\\
\pm\dot{w}=-\frac{m+p}{2}w-\frac{\beta}{\alpha}zw-\frac{\sigma}{2}xw-w^3+xzw,\end{array}\right.
\end{equation}
where the minus sign corresponds to one of the points and the plus sign to the other one. We see that both these points are nodes (one unstable and one stable) and the orbits connecting to them contain profiles with a change of sign at some positive point $\xi_0\in(0,\infty)$. These points correspond to the critical points $Q_2$ and $Q_3$ in our initial system \eqref{PSsyst1}.

$\bullet$ The critical point $(0,0,1,0)$ cannot contain profiles having either an interface or a tail behavior as $\xi\to\infty$, since we noticed that this implies $\overline{Z}=0$. In fact, this point corresponds to the critical point $Q_4$ in the system \eqref{PSsyst1}.

$\bullet$ The critical point
$$
\left(\frac{m}{\sqrt{1+m^2}},\frac{1}{\sqrt{1+m^2}},0,0\right)
$$
is topologically equivalent to the critical point $(y,z,w)=(1/m,0,0)$ in the system \eqref{systinf11}. It is straightforward (by imitating the proof of Lemma \ref{lem.Q5}) that this point contains profiles with a change of sign at $\xi=0$ of the form $f(\xi)\sim K\xi^{1/m}$ as $\xi\to0$ and it corresponds to the critical point $Q_5$ in the system \eqref{PSsyst1}.

$\bullet$ The critical point
$$
\left(0,-\frac{\beta}{\sqrt{\alpha^2+\beta^2}},\frac{\alpha}{\sqrt{\alpha^2+\beta^2}},0\right)
$$
is topologically equivalent to the critical point $(0,-\alpha/\beta,0)$ in the system \eqref{systinf21}, and the linearization of the system \eqref{systinf21} in a neighborhood of it has the matrix
$$
M=\left(
             \begin{array}{ccc}
               1-m & 0 & 0 \\
               \frac{\alpha(\alpha+\beta)}{\beta^2} & 1 & 0 \\
               0 & 0 & \frac{2-m-p}{2} \\
             \end{array}
           \right),
$$
with eigenvalues $\lambda_1=1-m<0$, $\lambda_2=1$ and $\lambda_3=(2-m-p)/2>0$ and eigenvectors
$$
e_1=\left(-\frac{m\beta^2}{\alpha(\alpha+\beta)},1,0\right), \ e_2=(0,1,0), \ e_3=(0,0,1).
$$
A standard analysis shows that the two-dimensional unstable manifold lies in the invariant plane $\{x=0\}$ while the one-dimensional stable manifold lies in the invariant plane $\{w=0\}$ (both invariant planes corresponding to the system \eqref{systinf21}), and the orbits contained in these manifolds contain no profiles. This is the point that would have been the correspondent to the critical point $P_1$ in the system \eqref{PSsyst1}, as explained in the formal considerations related to the change of sign of $m+p-2$ at the beginning of the current Section.

$\bullet$ There exists one more critical point having all three non-zero components, that is
$$
\left(\frac{m-1}{2L},\frac{1}{L},\frac{\alpha(m+1)}{L}\right), \qquad L^2=1+(m+1)^2\alpha^2+\frac{(m-1)^2}{4}.
$$
A detailed analysis of this point shows that it corresponds to the critical point $P_2$ in the system \eqref{PSsyst1}. However, for our goals the point can be discarded even without performing this analysis, as we explained that the points of interest for the interface or tail behavior should necessarily have $\overline{Z}=0$.

\medskip

Since these are all the critical points in the system \eqref{PSsyst5}, and they codify thus all the information about the blow-up profiles, we conclude that there is no blow-up profile either with interface at a finite $\xi_0\in(0,\infty)$ or with a tail behavior as $\xi\to\infty$, ending the proof.
\end{proof}

\section*{Final comments and extensions}

We gather in this final page some comments about the remaining cases and some open problems.

\medskip

\noindent \textbf{1. The very interesting case $m+p=2$} is not considered in the current work and is studied in the companion paper \cite{IS20crit}. This is because a significant number of differences in the techniques appear. By inspecting for example the autonomous system \eqref{PSsyst1} we notice that the equation for $\dot{Z}$ simplifies and instead of the critical points studied here, we have \emph{a critical parabola} of equation
$$
-Y^2-\frac{\beta}{\alpha}Y-Z=0,
$$
formed by critical points and connecting the critical points $P_0=(0,0,0)$ and $P_1=(0,-\beta/\alpha,0)$. Studying the parabola involves different techniques than the ones we have used in the present work. Moreover, an interesting feature of the critical case $m+p=2$ is that the interface behaviors coincide and \emph{there cannot be an interface at some $\xi_0\in(0,\infty)$ sufficiently large}, radically contrasting the results in Section \ref{sec.type1}. The backward shooting method is no longer applicable for this case and it will be replaced by other techniques based on the geometry of the phase space.

\medskip

\noindent \textbf{2. The uniqueness of good profiles with interface of Type I} is an interesting open problem to be raised in relation to the results we get in the present paper. Indeed, good profiles with interface are not unique and moreover we show in Section \ref{sec.type2} that good profiles with interface of Type II are infinite for any fixed $\sigma$. However, the local uniqueness of the Type I interface behavior at a given $\xi_0\in(0,\infty)$ (see Proposition \ref{prop.uniq}) and the proof of Theorem \ref{th.exist} by the backward shooting method suggest at an intuitive level that the uniqueness of this type of profile for a given $\sigma$ is expected to be true. We do not have any clue about a proof of it and we feel that it requires to obtain results of monotonicity (of some of the trajectories at least, or of the global change of the phase space) with respect to $\sigma$, a task which is usually very difficult.

\medskip

\noindent \textbf{3. The non-existence of profiles when $m+p<2$} given as Theorem \ref{th.non} hides in fact a deeper fact: it is expected that \emph{no solution except the zero one} exists for Eq. \eqref{eq1} when $m+p<2$. In the related paper \cite{IMS20} we show, among other results, such a sharp non-existence result for a related equation with a stronger weight on the reaction term, that is
$$
\partial_tu=\Delta u^m+(1+|x|)^{\sigma}u^p
$$
in the same range $m+p<2$. But we feel that the difference between $(1+|x|)^{\sigma}$ and $|x|^{\sigma}$ is not so essential for the non-existence and our Theorem \ref{th.non} confirms these expectations.

\bigskip

\noindent \textbf{Acknowledgements} A. S. is partially supported by the Spanish project MTM2017-87596-P.

\bibliographystyle{plain}

\end{document}